\providecommand{\U}[1]{\protect \rule{.1in}{.1in}}
\newtheorem{theorem}{Theorem}
\newtheorem{definition}[theorem]{Definition}
\newtheorem{example}[theorem]{Example}
\newtheorem{lemma}[theorem]{Lemma}
\newtheorem{proposition}[theorem]{Proposition}
\newtheorem{remark}[theorem]{Remark}
\newenvironment{proof}[1][Proof]{\noindent \textbf{#1.} }{\  $\Box$}
\begin{document}

\title{Extended Conditional $G$-Expectations and Related Stopping Times}
\author{Mingshang Hu \thanks{School of Mathematics, Shandong University,
humingshang@sdu.edu.cn. Research supported by the National Natural
Science Foundation of China (11201262 and 11101242)} \and Shige
Peng\thanks{School of Mathematics and Qilu Institute of Finance,
Shandong University, peng@sdu.edu.cn, Hu and Peng's research was
partially supported by NSF of China No. 10921101; and by the 111
Project No. B12023}}

\maketitle
\date{}

\begin{abstract}
In this paper we extend the definition of time conditional
$G$-expectations $\mathbb{\hat{E}}_{t}[\cdot]$ to a larger domain on
which the dynamical consistency still holds. In fact we can
consistently define, by taking the limit, the time conditional
expectations for each random variable $X$ which is the downward
limit (resp. upward limit) of a monotone sequence $\{X_{i}\}$ in
$L_{G}^{1}(\Omega)$. To accomplish this procedure, some careful
analysis is needed. Moreover, we give a suitable definition of stopping times and obtain the optional stopping theorem. We also provide some basic and interesting
properties for the extended conditional $G$-expectations.

\end{abstract}

\textbf{Key words}: $G$-expectation, stopping times, optional stopping theorem for $G$-expectation, dynamical risk measure, Knightian uncertainty

\textbf{MSC-classification}: 60H10, 60H30

\section{Introduction}

A typical $G$-expectation is a sublinear expectation defined on a
linear space of random variables $L_{G}^{1}(\Omega)$ which is the
completion of the linear space $L_{ip}(\Omega)$ of Lipschitz
cylinder functions on some $d$-dimensional continuous path space
$\Omega=C_0^d[0,\infty)$. A sublinear expectation
$\mathbb{\hat{E}}{\normalsize :\ }L_{ip}(\Omega)\mapsto\mathbb{R}$,
called
$G$-expectation, is constructed under which the canonical path $B_{t}%
(\omega):=\omega_{t}$ is a $G$-Brownian motion, namely, it has independent and
stable increments under the the sublinear expectation $\mathbb{\hat{E}}$. The
completion is under the natural Banach norm defined by $\left\Vert
X\right\Vert :=\mathbb{\hat{E}}{\normalsize [|X|]}$ for $X$ in $L_{ip}%
(\Omega)$. A $1$-dimensional $G$-Brownian motion, for $d=1$, is
characterized by a sublinear monotone function $G$ defined by%
\[
G(x):=\frac{1}{2}\mathbb{\hat{E}}{\normalsize [x}B_{1}^{2}]:\mathbb{R}%
\mapsto\mathbb{R}\text{.}%
\]
In general each sublinear and monotone function $G$ corresponds to a unique $G$-expectation
$\mathbb{\hat{E}}[\cdot]=\mathbb{\hat{E}}_{G}[\cdot]$ under which the
canonical process is the corresponding $G$-Brownian motion. $G\geq\bar{G}$ if
and only if $\mathbb{\hat{E}}_{G}\geq\mathbb{\hat{E}}_{\bar{G}}$. A typical
situation is: $G$ dominates $\bar{G}(x):=\frac{x}{2}$. In this case $\bar{G}%
$-Brownian motion is in fact the classical standard Brownian under a linear
expectation $\mathbb{\hat{E}}_{\bar{G}}=E_{P}$, which the linear expectation
is induced by a classical Wiener measure $P$ on $(\Omega,\mathcal{B}%
{\normalsize (\Omega))}$. In this case $G\geq\bar{G}$ implies that the Banach
norm $\mathbb{\hat{E}}[|\cdot|]$ is stronger than $E_{P}[|\cdot|]$,
consequently $L_{G}^{1}(\Omega)$ becomes a  subspace of the classical
$L^{1}(\Omega)$ space under the Wiener measure $P$.

An important advantage of the $G$-framework is that the time conditional
expectation is well defined on $L_{G}^{1}(\Omega)$ with time consistency, thus
the notion of nonlinear martingales can be naturally introduced. A
generalization of stochastic calculus of It\^{o}'s type is also established.

One of main problems of this $G$-expectation framework is that the
space of random variables $L_{G}^{p}(\Omega)$ is still not big
enough to contain some interesting random variables. For example,
given a continuous, or right continuous, stochastic process
$(X_{t})_{t\geq0}$ such that $X_{t}\in L_{G}^{1}(\Omega_{t})$, and
the exit time $\tau$ of $X$ from some domain, the random variable
$X_{\tau}$ may fail to be in $L_{G}^{p}(\Omega)$.   Many research papers are devoted to solve  this very attractive problem, see, among others,  \cite{GPP-2},  \cite{L-P}, \cite{NH}, \cite{Song-11} and  \cite {NZ}, but many things are still to be understood.

In this paper we will  attack this problem extend the definition
of time conditional $G$-expectations to a space of random variables
larger than $L_{G}^{1}(\Omega)$ on which the dynamical consistency
still holds. The main idea is quite simple: we consistently define,
by taking the limit, the time conditional expectations for each
random variable $X$ which is the downward
limit (respectively upward limit) of a monotone sequence $\{X_{n}%
\}_{n=1}^{\infty}$ in $L_{G}^{1}(\Omega)$. To accomplish this
procedure we need some careful analysis. We also provide some basic
and interesting properties for the extended conditional
$G$-expectation.

This paper is organized as follows: in the next section we recall
some basic results of the $G$-framework on $G$-expectation. In
Section~3 we take three steps to extend $G$-expectation to larger
domains. In Section 3.1, we define
$L_{G}^{1^{\ast}}(\Omega)$ space which is the downward extension of $L_{G}%
^{1}(\Omega)$; then, in Section~3.2, define
$L_{G}^{1_{\ast}^{\ast}}(\Omega)$ which is a \textquotedblleft
upward extension\textquotedblright\ of $L_{G}^{1^{\ast}}(\Omega)$;
and finally in Section 3.3, we extend furthermore the space
$\bar{L}_{G}^{1_{\ast}^{\ast}}(\Omega)$ by taking the completion
under the norm induced by our $G$-expectation. In Section 3.4 we
discuss some important random elements in our new framework but may not
 be in the classical $G$-expectation space
$L_{G}^{1}(\Omega)$. Moreover, we give the definition of stopping times and obtain
the optional stopping theorem. Section 4 shows that a more general nonlinear
expectations dominated by a sublinear $G$-expectation can also have
its extension.

We recall that \cite{NH} also extended the conditional $G$-expectation to a very general class of functions. We  limit ourself to treat the case where the random variables are still within   the class of Borel measurable functions.

\section{Preliminaries}

We review some basic notions and results of $G$-expectation and the related
spaces of random variables. The readers may refer to \cite{DHP11},
\cite{HJPS}, \cite{P07a}, \cite{P07b}, \cite{P08a}, \cite{P08b}, \cite{P10}
for more details.

\subsection{$G$-expectations}

Let $\Omega$ be a given set and let $\mathcal{H}$ be a vector lattice of real
valued functions defined on $\Omega$, namely $c\in \mathcal{H}$ for each
constant $c$ and $|X|\in \mathcal{H}$ if $X\in \mathcal{H}$. We further suppose
that if $X_{1},\ldots,X_{n}\in \mathcal{H}$, then $\varphi(X_{1},\cdots
,X_{n})\in \mathcal{H}$ for each $\varphi \in C_{b.Lip}(\mathbb{R}^{n})$, where
$C_{b.Lip}(\mathbb{R}^{n})$ denotes the space of bounded and Lipschitz
functions. $\mathcal{H}$ is considered as the space of random variables.

\begin{definition}
\label{def2.1} A sublinear expectation $\mathbb{\hat{E}}$ on $\mathcal{H}$ is
a functional $\mathbb{\hat{E}}:\mathcal{H}\rightarrow \mathbb{R}$ satisfying
the following properties: for all $X,Y\in \mathcal{H}$, we have

\begin{description}
\item[(a)] Monotonicity: If $X\geq Y$ then $\mathbb{\hat{E}}[X]\geq
\mathbb{\hat{E}}[Y]$;

\item[(b)] Constant preservation: $\mathbb{\hat{E}}[c]=c$;

\item[(c)] Sub-additivity: $\mathbb{\hat{E}}[X+Y]\leq \mathbb{\hat{E}%
}[X]+\mathbb{\hat{E}}[Y]$;

\item[(d)] Positive homogeneity: $\mathbb{\hat{E}}[\lambda X]=\lambda
\mathbb{\hat{E}}[X]$ for each $\lambda \geq0$.
\end{description}

The triple $(\Omega,\mathcal{H},\mathbb{\hat{E}})$ is called a sublinear
expectation space.
\end{definition}

\begin{remark}
If the inequality in (c) becomes equality, then $\mathbb{\hat{E}}$ is called a
linear expectation.
\end{remark}

\begin{definition}
\label{def2.2} Let $X_{1}$ and $X_{2}$ be two $n$-dimensional random vectors
defined respectively in sublinear expectation spaces $(\Omega_{1}%
,\mathcal{H}_{1},\mathbb{\hat{E}}_{1})$ and $(\Omega_{2},\mathcal{H}%
_{2},\mathbb{\hat{E}}_{2})$. They are called identically distributed, denoted
by $X_{1}\overset{d}{=}X_{2}$, if $\mathbb{\hat{E}}_{1}[\varphi(X_{1}%
)]=\mathbb{\hat{E}}_{2}[\varphi(X_{2})]$, for all$\  \varphi \in C_{b.Lip}%
(\mathbb{R}^{n})$.
\end{definition}

\begin{definition}
\label{def2.3} In a sublinear expectation space $(\Omega,\mathcal{H}%
,\mathbb{\hat{E}})$, a random vector $Y=(Y_{1},\cdot \cdot \cdot,Y_{n})$,
$Y_{i}\in \mathcal{H}$, is said to be independent of another random vector
$X=(X_{1},\cdot \cdot \cdot,X_{m})$, $X_{i}\in \mathcal{H}$ under $\mathbb{\hat
{E}}[\cdot]$, denoted by $Y\bot X$, if for every test function $\varphi \in
C_{b.Lip}(\mathbb{R}^{m}\times \mathbb{R}^{n})$ we have $\mathbb{\hat{E}%
}[\varphi(X,Y)]=\mathbb{\hat{E}}[\mathbb{\hat{E}}[\varphi(x,Y)]_{x=X}]$.
\end{definition}

\begin{definition}
\label{def2.4} ($G$-normal distribution) A $d$-dimensional random vector
$X=(X_{1},\cdot \cdot \cdot,X_{d})$ in a sublinear expectation space
$(\Omega,\mathcal{H},\mathbb{\hat{E}})$ is called $G$-normally distributed if
$\mathbb{\hat{E}}[|X|^{3}]<\infty$ and for each $a,b\geq0$
\[
aX+b\bar{X}\overset{d}{=}\sqrt{a^{2}+b^{2}}X,
\]
where $\bar{X}$ is an independent copy of $X$, i.e., $\bar{X}\overset{d}{=}X$
and $\bar{X}\bot X$. Here the letter $G$ denotes the function
\[
G(A):=\frac{1}{2}\mathbb{\hat{E}}[\langle AX,X\rangle]:\mathbb{S}%
_{d}\rightarrow \mathbb{R},
\]
where $\mathbb{S}_{d}$ denotes the collection of $d\times d$ symmetric matrices.
\end{definition}

Peng \cite{P08b} showed that $X=(X_{1},\cdot \cdot \cdot,X_{d})$ is $G$-normally
distributed if and only if for each $\varphi \in C_{b.Lip}(\mathbb{R}^{d})$,
$u(t,x):=\mathbb{\hat{E}}[\varphi(x+\sqrt{t}X)]$, $(t,x)\in \lbrack
0,\infty)\times \mathbb{R}^{d}$, is the solution of the following $G$-heat
equation:%
\[
\partial_{t}u-G(D_{x}^{2}u)=0,\ u(0,x)=\varphi(x).
\]

The function $G(\cdot):\mathbb{S}_{d}\rightarrow \mathbb{R}$ is a monotonic,
sublinear mapping on $\mathbb{S}_{d}$, which implies that there exists a
bounded, convex and closed subset $\Sigma \subset \mathbb{S}_{d}^{+}$ such that
\[
G(A)=\frac{1}{2}\sup_{B\in \Sigma}\mathrm{tr}[AB],
\]
where $\mathbb{S}_{d}^{+}$ denotes the collection of nonnegative elements in
$\mathbb{S}_{d}$.

\begin{definition}
\label{def2.5} i) Let $\Omega=C_{0}^{d}(\mathbb{R}^{+})$ be the space of all
$\mathbb{R}^{d}$-valued continuous paths $(\omega_{t})_{t\in \mathbb{R}^{+}}$,
with $\omega_{0}=0$, equipped with the disance%
\[
\rho(\omega^{1},\omega^{2}):=\sum_{i=1}^{\infty}2^{-i}[(\max_{t\in \lbrack
0,i]}|\omega_{t}^{1}-\omega_{t}^{2}|)\wedge1].
\]
The canonical process is defined by $B_{t}(\omega)=\omega_{t}$, $t\in
\lbrack0,\infty)$, for $\omega \in \Omega$. Set
\[
L_{ip}(\Omega):=\{ \varphi(B_{t_{1}},...,B_{t_{n}}):n\geq1,t_{1},...,t_{n}%
\in \lbrack0,\infty),\varphi \in C_{b.Lip}(\mathbb{R}^{d\times n})\}.
\]
Let $G:\mathbb{S}_{d}\rightarrow \mathbb{R}$ be a given monotonic and sublinear
function. $G$-expectation is a sublinear expectation on $L_{ip}(\Omega)$
defined by
\[
\mathbb{\hat{E}}[X]=\mathbb{\tilde{E}}[\varphi(\sqrt{t_{1}-t_{0}}\xi_{1}%
,\cdot \cdot \cdot,\sqrt{t_{m}-t_{m-1}}\xi_{m})],
\]
for all $X=\varphi(B_{t_{1}}-B_{t_{0}},B_{t_{2}}-B_{t_{1}},\cdot \cdot
\cdot,B_{t_{m}}-B_{t_{m-1}})$ with $0\leq t_{0}<t_{1}<\cdots<t_{m}<\infty$,
where $\xi_{1},\cdot \cdot \cdot,\xi_{n}$ are identically distributed
$d$-dimensional $G$-normally distributed random vectors in a sublinear
expectation space $(\tilde{\Omega},\tilde{\mathcal{H}},\mathbb{\tilde{E}})$
such that $\xi_{i+1}$ is independent of $(\xi_{1},\cdot \cdot \cdot,\xi_{i})$
for every $i=1,\cdot \cdot \cdot,m-1$. The corresponding canonical process
$B_{t}=(B_{t}^{i})_{i=1}^{d}$ is called a $G$-Brownian motion.

ii) Set $\Omega_{t}=\{ \omega_{\cdot \wedge t}:\omega \in \Omega \}$ for $t\geq0$.
For each $\xi=\varphi(B_{t_{1}}-B_{t_{0}},B_{t_{2}}-B_{t_{1}},\cdot \cdot
\cdot,B_{t_{m}}-B_{t_{m-1}})$, the conditional $G$-expectation of $\xi$ under
$\Omega_{t_{i}}$ is defined by
\[
\mathbb{\hat{E}}_{t_{i}}[\varphi(B_{t_{1}}-B_{t_{0}},B_{t_{2}}-B_{t_{1}}%
,\cdot \cdot \cdot,B_{t_{m}}-B_{t_{m-1}})]
\]%
\[
=\tilde{\varphi}(B_{t_{1}}-B_{t_{0}},B_{t_{2}}-B_{t_{1}},\cdot \cdot
\cdot,B_{t_{i}}-B_{t_{i-1}}),
\]
where
\[
\tilde{\varphi}(x_{1},\cdot \cdot \cdot,x_{i})=\mathbb{\hat{E}}[\varphi
(x_{1},\cdot \cdot \cdot,x_{i},B_{t_{i+1}}-B_{t_{i}},\cdot \cdot \cdot,B_{t_{m}%
}-B_{t_{m-1}})].
\]

\end{definition}

For each fixed $T\geq0$, we set%
\[
L_{ip}(\Omega_{T}):=\{ \varphi(B_{t_{1}},...,B_{t_{n}}):n\geq1,t_{1}%
,...,t_{n}\in \lbrack0,T],\varphi \in C_{b.Lip}(\mathbb{R}^{d\times n})\}.
\]
It is clear that $L_{ip}(\Omega_{T_{1}})\subset L_{ip}(\Omega_{T_{2}})\subset
L_{ip}(\Omega)$ for $T_{1}<T_{2}$. We denote by $L_{G}^{p}(\Omega)$ and
$L_{G}^{p}(\Omega_{T})$, $p\geq1$, the completion of $L_{ip}(\Omega)$ and
$L_{ip}(\Omega_{T})$ under the norm $\Vert \xi \Vert_{p}=(\mathbb{\hat{E}}%
[|\xi|^{p}])^{1/p}$. It is easy to verify that $L_{G}^{p_{1}}(\Omega)\subset
L_{G}^{p_{2}}(\Omega)$ for $p_{1}\geq p_{2}\geq1$.

The $G$-expectation $\mathbb{\hat{E}}[\cdot]$ can be continuously extended to
a sublinear expectation on $(\Omega,L_{G}^{1}(\Omega))$ still denoted by
$\mathbb{\hat{E}}[\cdot]$. For each given $t\geq0$, the conditional
$G$-expectation $\mathbb{\hat{E}}_{t}[\cdot]:L_{ip}(\Omega)\rightarrow
L_{ip}(\Omega_{t})$ can be also extended as a mapping $\mathbb{\hat{E}}%
_{t}[\cdot]:L_{G}^{1}(\Omega)\rightarrow L_{G}^{1}(\Omega_{t})$ and satisfies
the following properties:

\begin{description}
\item[(i)] If $X$, $Y\in L_{G}^{1}(\Omega)$, $X\geq Y$, then $\mathbb{\hat{E}%
}_{t}[X]\geq \mathbb{\hat{E}}_{t}[Y]$;

\item[(ii)] If $X\in L_{G}^{1}(\Omega_{t})$, $Y\in L_{G}^{1}(\Omega)$, then
$\mathbb{\hat{E}}_{t}[X+Y]=X+\mathbb{\hat{E}}_{t}[Y]$;

\item[(iii)] If $X$, $Y\in L_{G}^{1}(\Omega)$, then $\mathbb{\hat{E}}%
_{t}[X+Y]\leq \mathbb{\hat{E}}_{t}[X]+\mathbb{\hat{E}}_{t}[Y]$;

\item[(iv)] If $X\in L_{G}^{1}(\Omega_{t})$ is bounded, $Y\in L_{G}^{1}%
(\Omega)$, then $\mathbb{\hat{E}}_{t}[XY]=X^{+}\mathbb{\hat{E}}_{t}%
[Y]+X^{-}\mathbb{\hat{E}}_{t}[-Y]$;

\item[(v)] If $X\in L_{G}^{1}(\Omega)$, then $\mathbb{\hat{E}}_{s}%
[\mathbb{\hat{E}}_{t}[X]]=\mathbb{\hat{E}}_{s\wedge t}[X]$, in paricular,
$\mathbb{\hat{E}}[\mathbb{\hat{E}}_{t}[X]]=\mathbb{\hat{E}}[X]$.
\end{description}

For each fixed $\mathbf{a}\in \mathbb{R}^{d}$, $B_{t}^{\mathbf{a}}%
=\langle \mathbf{a},B_{t}\rangle$ is a $1$-dimensional $G_{\mathbf{a}}%
$-Brownian motion, where $G_{\mathbf{a}}(\alpha)=\frac{1}{2}(\sigma
_{\mathbf{aa}^{T}}^{2}\alpha^{+}-\sigma_{-\mathbf{aa}^{T}}^{2}\alpha^{-})$,
$\sigma_{\mathbf{aa}^{T}}^{2}=2G(\mathbf{aa}^{T})$, $\sigma_{-\mathbf{aa}^{T}%
}^{2}=-2G(-\mathbf{aa}^{T})$. Let $\pi_{t}^{N}=\{t_{0}^{N},\cdots,t_{N}^{N}%
\}$, $N=1,2,\cdots$, be a sequence of partitions of $[0,t]$ such that $\mu
(\pi_{t}^{N})=\max \{|t_{i+1}^{N}-t_{i}^{N}|:i=0,\cdots,N-1\} \rightarrow0$,
the quadratic variation process of $B^{\mathbf{a}}$ is defined by%
\[
\langle B^{\mathbf{a}}\rangle_{t}=L_{G}^{2}-\lim_{\mu(\pi_{t}^{N}%
)\rightarrow0}\sum_{j=0}^{N-1}(B_{t_{j+1}^{N}}^{\mathbf{a}}-B_{t_{j}^{N}%
}^{\mathbf{a}})^{2}.
\]
For each fixed $\mathbf{a}$, $\mathbf{\bar{a}}\in \mathbb{R}^{d}$, the mutual
variation process of $B^{\mathbf{a}}$ and $B^{\mathbf{\bar{a}}}$ is defined by%
\[
\langle B^{\mathbf{a}},B^{\mathbf{\bar{a}}}\rangle_{t}=\frac{1}{4}[\langle
B^{\mathbf{a}+\mathbf{\bar{a}}}\rangle_{t}-\langle B^{\mathbf{a}%
-\mathbf{\bar{a}}}\rangle_{t}].
\]

\begin{definition}
\label{def2.6} Let $M_{G}^{0}(0,T)$ be the collection of processes in the
following form: for a given partition $\{t_{0},\cdot \cdot \cdot,t_{N}\}=\pi
_{T}$ of $[0,T]$,
\[
\eta_{t}(\omega)=\sum_{j=0}^{N-1}\xi_{j}(\omega)I_{[t_{j},t_{j+1})}(t),
\]
where $\xi_{j}\in L_{ip}(\Omega_{t_{j}})$, $j=0,1,2,\cdot \cdot \cdot,N-1$. For
$p\geq1$ and $\eta \in M_{G}^{0}(0,T)$, let $\Vert \eta \Vert_{H_{G}^{p}}=\{
\mathbb{\hat{E}}[(\int_{0}^{T}|\eta_{s}|^{2}ds)^{p/2}]\}^{1/p}$, $\Vert
\eta \Vert_{M_{G}^{p}}=\{ \mathbb{\hat{E}}[\int_{0}^{T}|\eta_{s}|^{p}%
ds]\}^{1/p}$. We denote by $H_{G}^{p}(0,T)$, $M_{G}^{p}(0,T)$ the completions
of $M_{G}^{0}(0,T)$ under the norms $\Vert \cdot \Vert_{H_{G}^{p}}$, $\Vert
\cdot \Vert_{M_{G}^{p}}$ respectively.
\end{definition}

For each $\eta_{t}=\sum_{j=0}^{N-1}\xi_{j}I_{[t_{j},t_{j+1})}(t)\in M_{G}%
^{0}(0,T)$, define $\int_{0}^{T}\eta_{t}dt=\sum_{j=0}^{N-1}\xi_{j}%
(t_{j+1}-t_{j})$, $\int_{0}^{T}\eta_{t}d\langle B^{\mathbf{a}}\rangle_{t}%
=\sum_{j=0}^{N-1}\xi_{j}(\langle B^{\mathbf{a}}\rangle_{t_{j+1}}-\langle
B^{\mathbf{a}}\rangle_{t_{j}})$ and $\int_{0}^{T}\eta_{t}dB_{t}^{\mathbf{a}%
}=\sum_{j=0}^{N-1}\xi_{j}(B_{t_{j+1}}^{\mathbf{a}}-B_{t_{j}}^{\mathbf{a}})$.
Under the norm $\Vert \cdot \Vert_{M_{G}^{1}}$, $\int_{0}^{T}\eta_{t}dt$ and
$\int_{0}^{T}\eta_{t}d\langle B^{\mathbf{a}}\rangle_{t}$ can be extended to
$\eta \in M_{G}^{1}(0,T)$. $\int_{0}^{T}\eta_{t}dB_{t}^{\mathbf{a}}$ can be
extended to $\eta \in H_{G}^{1}(0,T)$ under the norm $\Vert \cdot \Vert
_{H_{G}^{1}}$.

For each fixed $n\in \mathbb{N}$, we set $L_{ip}(\Omega;\mathbb{R}^{n})=\{
\xi=(\xi_{1},\ldots,\xi_{n}):\xi_{i}\in L_{ip}(\Omega)$, $i\leq n\}$.
Similarly, we can define $L_{ip}(\Omega_{t};\mathbb{R}^{n})$, $L_{G}%
^{p}(\Omega;\mathbb{R}^{n})$, $L_{G}^{p}(\Omega_{t};\mathbb{R}^{n})$,
$H_{G}^{p}(0,T;\mathbb{R}^{n})$ and $M_{G}^{p}(0,T;\mathbb{R}^{n})$.

\subsection{$G$-capacities}

Let $(\Omega,L_{ip}(\Omega),\mathbb{\hat{E}}[\cdot])$ be the $G$-expectation
space. The filtration is
\begin{align*}
\mathcal{F}_{t}  &  =\sigma \{B_{s}:s\leq t\},\  \  \ t\geq0,\\
\  \mathcal{F}  &  =\vee_{t\geq0}\mathcal{F}_{t}=\mathcal{B}(\Omega).
\end{align*}
We denote by $\mathcal{M}$ the set of all probability measures on
$(\Omega,\mathcal{B}(\Omega))$.

\begin{theorem}
\label{the2.7} (\cite{DHP11,HP09}) There exists a weakly compact set
$\mathcal{P}\subset \mathcal{M}$ such that
\[
\mathbb{\hat{E}}[X]=\sup_{P\in \mathcal{P}}E_{P}[X]\  \  \text{for \ each}\ X\in
L_{ip}(\Omega).
\]
$\mathcal{P}$ is called a set that represents $\mathbb{\hat{E}}$.
\end{theorem}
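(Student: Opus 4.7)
The plan is to realize $\mathcal{P}$ as the weak closure of the family of laws $\{P^{\theta}\}$ of It\^o integrals $\int_0^{\cdot}\theta_s\,dW_s$ on a classical Wiener space, where $\theta$ ranges over progressively measurable processes valued in a bounded set of ``square roots'' of matrices in $\Sigma$. Weak compactness will follow from Kolmogorov-type tightness combined with closure under weak convergence, and the representation identity will be obtained first on the generating family $\{P^{\theta}\}$ and then transported to its closure.

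First, fix an auxiliary probability space $(\bar{\Omega},\bar{\mathcal{F}},\bar{P})$ carrying a $d$-dimensional classical Brownian motion $W$ with its augmented filtration $(\bar{\mathcal{F}}_t)$. Let $\Sigma^{1/2}:=\{\gamma\in\mathbb{S}_d^+:\gamma\gamma^{T}\in\Sigma\}$, which is bounded because $\Sigma$ is, and let $\mathcal{A}$ be the set of $(\bar{\mathcal{F}}_t)$-progressively measurable processes with values in $\Sigma^{1/2}$. For each $\theta\in\mathcal{A}$ set $B_t^{\theta}:=\int_0^t\theta_s\,dW_s$ and let $P^{\theta}$ denote the law of $(B_t^{\theta})_{t\ge0}$ on $\Omega$. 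Starting from
\[
G(A)=\tfrac12\sup_{B\in\Sigma}\mathrm{tr}[AB]=\tfrac12\sup_{\gamma\in\Sigma^{1/2}}\mathrm{tr}[A\gamma\gamma^{T}]
\]
and the identification of the unique viscosity solution of $\partial_tu-G(D_x^2u)=0$, $u(0,x)=\varphi(x)$, with the stochastic control value function $\sup_{\theta}E_{\bar{P}}[\varphi(x+\int_0^t\theta_s\,dW_s)]$ (a standard HJB verification argument), I would then show by induction on the number of time points in a cylinder functional that for every $\varphi\in C_{b.Lip}(\mathbb{R}^{d\times m})$ and $0\le t_1<\cdots<t_m$,
\begin{align*}
\mathbb{\hat{E}}[\varphi(B_{t_1},\ldots,B_{t_m})]
&=\sup_{\theta\in\mathcal{A}}E_{\bar{P}}[\varphi(B_{t_1}^{\theta},\ldots,B_{t_m}^{\theta})]\\
&=\sup_{\theta\in\mathcal{A}}E_{P^{\theta}}[\varphi(B_{t_1},\ldots,B_{t_m})].
\end{align*}

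Second, put $\mathcal{P}_0:=\{P^{\theta}:\theta\in\mathcal{A}\}$ and let $\mathcal{P}$ denote its weak closure in $\mathcal{M}$. Since every $X\in L_{ip}(\Omega)$ is bounded and continuous on $\Omega$, the functional $P\mapsto E_P[X]$ is weakly continuous, so the supremum over $\mathcal{P}_0$ coincides with the supremum over $\mathcal{P}$, giving the representation identity. Weak closedness of $\mathcal{P}$ is automatic; for relative weak compactness I would use the moment estimate $E_{\bar{P}}[|B_t^{\theta}-B_s^{\theta}|^4]\le C|t-s|^2$, valid with $C$ independent of $\theta$ because $\Sigma^{1/2}$ is bounded, and then invoke the Kolmogorov--Chentsov tightness criterion on each $C([0,N];\mathbb{R}^d)$ together with a diagonal argument.

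The principal obstacle is step one --- the stochastic control representation of the $G$-normal distribution at the level of finite-dimensional distributions. It requires both existence and uniqueness for the $G$-heat equation in the viscosity sense and a verification argument showing that the control value function solves this equation, and one must carefully iterate the one-step representation along the partition $t_1<\cdots<t_m$ using the tower property from Section 2.1 and the independence of increments built into the definition of $\mathbb{\hat{E}}$ on $L_{ip}(\Omega)$. Once this identification is in hand, the tightness and weak-closure steps, and thereby weak compactness of $\mathcal{P}$, are routine.
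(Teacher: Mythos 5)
The paper does not prove this statement; it is quoted from \cite{DHP11,HP09} as a known preliminary, so there is no internal proof to compare against. Your proposal is, in outline, exactly the construction used in those references: $\mathcal{P}$ is the weak closure of the laws of $\int_0^{\cdot}\theta_s\,dW_s$ with $\theta$ valued in the bounded set of square roots of $\Sigma$, the representation on cylinder functions comes from the stochastic-control characterization of the $G$-heat equation iterated backward along the time grid, and weak compactness follows from the uniform fourth-moment estimate plus the Kolmogorov tightness criterion on each $C([0,N];\mathbb{R}^d)$ with a diagonal argument. The only places where real work is hidden are the ones you flag yourself: the comparison/uniqueness theorem for the $G$-heat equation in the viscosity sense, and, in the induction over the time points, the pasting of near-optimal controls across the partition (a measurable-selection issue, handled in \cite{DHP11} via the Lipschitz continuity of the one-step value function in the spatial variable); with those supplied, your argument is complete and coincides with the cited proof.
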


Let $\mathcal{P}$ be a weakly compact set that represents $\mathbb{\hat{E}}$.
For this $\mathcal{P}$, we define the capacity%
\[
c(A)=\sup_{P\in \mathcal{P}}P(A)\text{ for each }A\in \mathcal{B}(\Omega).
\]
One can verify the following proposition.

\begin{proposition}
\label{pro2.8} $c(\cdot)$ satisfies the following properties:

\begin{description}
\item[(1)] $A\subset B\Longrightarrow c(A)\leq c(B)$;

\item[(2)] $c(\cup A_{n})\leq \sum c(A_{n})$;

\item[(3)] $A_{n}\uparrow A\Longrightarrow c(A_{n})\uparrow c(A)$;

\item[(4)] $F_{n}$ closed set, $F_{n}\downarrow F\Longrightarrow
c(F_{n})\downarrow c(F)$;

\item[(5)] For each $A\in \mathcal{B}(\Omega)$, $c(A)=\sup \{c(K):K$ compact
$K\subset A\}$.
\end{description}
\end{proposition}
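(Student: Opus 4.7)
The plan is to check each of the five properties in order, drawing on only two tools beyond the definition $c(A)=\sup_{P\in\mathcal{P}} P(A)$: that each $P\in\mathcal{P}$ is itself a countably additive Radon probability on the Polish space $\Omega=C_0^d(\mathbb{R}^+)$, and that $\mathcal{P}$ is weakly compact (so in particular tight by Prokhorov, and $P\mapsto P(F)$ is upper semicontinuous for closed $F$ by the Portmanteau theorem).

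Properties (1) and (2) are one-line reductions: monotonicity $P(A)\le P(B)$ and countable subadditivity $P(\bigcup A_n)\le\sum P(A_n)$ hold for every $P\in\mathcal{P}$; passing to the sup on the left and bounding each term by $c(A_n)$ on the right gives what we need. For (3), monotonicity gives $\lim_n c(A_n)\le c(A)$; conversely, for $\varepsilon>0$ pick $P_\varepsilon\in\mathcal{P}$ with $P_\varepsilon(A)>c(A)-\varepsilon$, apply the continuity from below of $P_\varepsilon$ to get $P_\varepsilon(A_n)\uparrow P_\varepsilon(A)$, and conclude $\liminf_n c(A_n)\ge c(A)-\varepsilon$.

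Property (4) is where weak compactness does the work, and I expect this to be the main obstacle. The function $P\mapsto P(F_n)$ is upper semicontinuous on the weakly compact set $\mathcal{P}$ (Portmanteau, since $F_n$ is closed), hence attains its maximum at some $P_n\in\mathcal{P}$ with $P_n(F_n)=c(F_n)$. By weak compactness, extract a subsequence $P_{n_k}\rightharpoonup P^*\in\mathcal{P}$. For any fixed $m$, eventually $n_k\ge m$ so $F_{n_k}\subset F_m$, whence
\[
c(F_{n_k})=P_{n_k}(F_{n_k})\le P_{n_k}(F_m).
\]
Upper semicontinuity of $P\mapsto P(F_m)$ then yields $\limsup_k c(F_{n_k})\le P^*(F_m)$; letting $m\to\infty$ and using continuity from above of $P^*$ (applicable because $P^*$ is a probability measure and $F_m\downarrow F$) gives $\limsup_k c(F_{n_k})\le P^*(F)\le c(F)$. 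Since $c(F_n)$ is monotone decreasing by (1), the full sequence converges to the same limit, which combined with $c(F_n)\ge c(F)$ gives $c(F_n)\downarrow c(F)$.

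Finally for (5), monotonicity already gives $\sup\{c(K):K\subset A\text{ compact}\}\le c(A)$. For the reverse, fix $\varepsilon>0$ and choose $P_\varepsilon\in\mathcal{P}$ with $P_\varepsilon(A)>c(A)-\varepsilon/2$. Because $\Omega$ is Polish, $P_\varepsilon$ is a Radon measure, so there is a compact $K\subset A$ with $P_\varepsilon(A\setminus K)<\varepsilon/2$. Then $c(K)\ge P_\varepsilon(K)>c(A)-\varepsilon$, and $\varepsilon$ was arbitrary. (Alternatively, one could first use tightness of $\mathcal{P}$ to strip off a common compact set, but the Radon regularity of the single measure $P_\varepsilon$ already suffices.)
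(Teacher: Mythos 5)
Your proof is correct. Note, however, that the paper offers no proof of this proposition at all --- it simply states ``One can verify the following proposition,'' deferring to the cited references \cite{DHP11,HP09} --- so there is nothing in the paper to compare your argument against. What you have written is the standard verification: (1)--(3) reduce immediately to monotonicity, countable subadditivity, and continuity from below of each individual $P\in\mathcal{P}$; (4) correctly isolates where weak compactness is needed, using upper semicontinuity of $P\mapsto P(F)$ for closed $F$ to extract maximizers $P_n$ and a weak limit $P^{*}\in\mathcal{P}$, then interleaving the Portmanteau bound with continuity from above of $P^{*}$; and (5) follows from tightness (Ulam's theorem) of each Borel probability on the Polish space $\Omega$. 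All steps are sound, and the one place where a gap could hide --- that the supremum in $c(F_n)$ is actually attained, which you justify by upper semicontinuity on a compact set --- is handled correctly.
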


A set $A\subset \Omega$ is polar if $c(A)=0$. A property holds
\textquotedblleft quasi-surely\textquotedblright \ (q.s. for short) if it holds
outside a polar set.

We set%
\[
L^{0}(\Omega):=\{X:\Omega \rightarrow \lbrack-\infty,\infty]\text{ and }X\text{
is }\mathcal{F}\text{-measurable}\},
\]%
\[
\mathcal{L}(\Omega):=\{X\in L^{0}(\Omega):E_{P}[X]\text{ exists for each }%
P\in \mathcal{P}\}.
\]
We extend $G$-expectation $\mathbb{\hat{E}}$ to $\mathcal{L}(\Omega)$ and
still denote it by $\mathbb{\hat{E}}$. For each $X\in \mathcal{L}(\Omega)$, we
define%
\[
\mathbb{\hat{E}}[X]=\sup_{P\in \mathcal{P}}E_{P}[X].
\]
If $X$, $Y\in \mathcal{L}(\Omega)$ such that $X=Y$ q.s., then $E_{P}%
[X]=E_{P}[Y]$ for each $P\in \mathcal{P}$. In the following, we do not
distinguish two random variables $X$ and $Y$ if $X=Y$ q.s.. We set%
\[
\mathbb{L}^{p}(\Omega):=\{X\in L^{0}(\Omega):\mathbb{\hat{E}}[|X|^{p}%
]<\infty \} \  \text{for}\ p\geq1.
\]
Obviously, $\mathbb{L}^{p}(\Omega)\subset \mathcal{L}(\Omega)$. For $p\geq1$,
$\mathbb{L}^{p}(\Omega)$ is a Banach space under the norm $(\mathbb{\hat{E}%
}[|\cdot|^{p}])^{1/p}$. Similarly, we can define $L^{0}(\Omega_{t})$,
$\mathcal{L}(\Omega_{t})$ and $\mathbb{L}^{p}(\Omega_{t})$.

A function $X:\Omega \rightarrow \lbrack-\infty,\infty]$ is called
quasi-continuous (q.c.) if for each $\varepsilon>0$, there exists a closed set
$F$ with $c(F^{c})<\varepsilon$ such that $X|_{F}$ is continuous. We say that
$Y:\Omega \rightarrow \lbrack-\infty,\infty]$ has a quasi-continuous version if
there exists a quasi-continuous function $X:\Omega \rightarrow \lbrack
-\infty,\infty]$ with $Y=X$ q.s..

\begin{theorem}
\label{the2.9}(\cite{DHP11,HP09}) For each $p\geq1$,%
\[
L_{G}^{p}(\Omega)=\{X\in \mathbb{L}^{p}(\Omega):X\text{ has a q.c. version,
}\lim_{n\rightarrow \infty}\mathbb{\hat{E}}[|X|^{p}I_{\{|X|>n\}}]=0\}.
\]

\end{theorem}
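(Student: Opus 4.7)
The plan is to prove the two inclusions separately. For the inclusion $\subseteq$, the strategy is to exploit that any $X\in L_G^p(\Omega)$ is the $\|\cdot\|_p$-limit of cylinder functions $X_m\in L_{ip}(\Omega)$, which are bounded and continuous on $\Omega$. For the reverse inclusion, the strategy is to first truncate $X$ using the uniform-integrability hypothesis so that it becomes bounded, then exploit the quasi-continuous version to approximate by a bounded continuous function on $\Omega$, and finally approximate the latter by Lipschitz cylinder functions.

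For $\subseteq$, I would first pass to a subsequence $\{X_{m_k}\}\subset L_{ip}(\Omega)$ chosen so that $\hat{\mathbb{E}}[|X-X_{m_k}|^p]\le 2^{-k(p+1)}$. By the Markov-type inequality $c(\{|X-X_{m_k}|>2^{-k/p}\})\le 2^{k}\hat{\mathbb{E}}[|X-X_{m_k}|^p]\le 2^{-kp}$, the capacities of the ``bad sets'' are summable. Using that the weakly compact representing family $\mathcal{P}$ induces a Choquet capacity, so that $c$ is outer regular on Borel sets, I would enlarge each bad set to an open set $U_k$ with $c(U_k)\le 2^{-kp+1}$; then $F_N:=(\bigcup_{k\ge N}U_k)^{c}$ is closed, $c(F_N^{c})\le\sum_{k\ge N}2^{-kp+1}$ is as small as one wants, and on $F_N$ the sequence $X_{m_k}$ converges to $X$ uniformly. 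Since each $X_{m_k}$ is continuous, the uniform limit $X|_{F_N}$ is continuous, so $X$ has a quasi-continuous version. For the tail condition, since every $X_{m}\in L_{ip}(\Omega)$ is bounded, one has
\[
\hat{\mathbb{E}}[|X|^{p}I_{\{|X|>n\}}]\le 2^{p-1}\hat{\mathbb{E}}[|X-X_m|^p]+2^{p-1}\|X_m\|_\infty^{p}\, c(\{|X|>n\}),
\]
and $c(\{|X|>n\})\to 0$ as $n\to\infty$ by Chebyshev, so choosing $m$ large first and then $n$ large gives the claim.

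For $\supseteq$, let $X$ satisfy the right-hand conditions. I would first set $X^{(n)}:=(-n)\vee X\wedge n$; since $|X-X^{(n)}|\le |X|\,I_{\{|X|>n\}}$, the uniform-integrability hypothesis gives $\hat{\mathbb{E}}[|X-X^{(n)}|^p]\to 0$, so it suffices to approximate each bounded q.c.\ function by elements of $L_{ip}(\Omega)$. Fix such a bounded q.c.\ $Y$ with $|Y|\le M$; for $\varepsilon>0$, pick closed $F$ with $c(F^{c})<\varepsilon$ and $Y|_{F}$ continuous. By a Tietze-type extension (applied on the Polish space $\Omega$) I obtain a bounded continuous $\tilde Y:\Omega\to\mathbb{R}$ with $\tilde Y|_{F}=Y|_{F}$ and $\|\tilde Y\|_\infty\le M$, yielding $\hat{\mathbb{E}}[|Y-\tilde Y|^p]\le (2M)^{p}\varepsilon$. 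It then remains to approximate the bounded continuous $\tilde Y$ on path space by Lipschitz cylinder functions: restrict in time by replacing $\omega$ with $\omega_{\cdot\wedge T}$ (error controlled by uniform continuity of $\tilde Y$ on the compact sets provided by Proposition~\ref{pro2.8}(5)), then project onto the finite-dimensional dyadic skeleton via polygonal interpolation, and finally smooth by convolution to obtain an element of $L_{ip}(\Omega)$.

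The main obstacle is the geometric step in the first inclusion: converting the summable capacity estimates on the possibly non-open bad sets $\{|X-X_{m_k}|>2^{-k/p}\}$ into a genuinely closed set $F_N$ with small complement capacity and uniform convergence on it. This requires the outer regularity of $c$, which is not one of the five properties listed in Proposition~\ref{pro2.8} but follows from the fact that $\mathcal{P}$ is weakly compact and $\Omega$ is Polish (so $c$ is a Choquet capacity). A secondary technical point is the path-space approximation of bounded continuous functions by cylinder Lipschitz functions, where compactness from Proposition~\ref{pro2.8}(5) is essential to turn pointwise into uniform control under $\hat{\mathbb{E}}$.
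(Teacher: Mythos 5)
The paper does not actually prove Theorem \ref{the2.9}; it is quoted from \cite{DHP11,HP09}, so your proposal has to stand on its own. Its overall architecture (subsequence plus summable capacity estimates for $\subseteq$; truncation, Tietze extension and cylinder approximation for $\supseteq$) is the standard one, but the step you yourself single out as the main obstacle contains a genuine error. You enlarge the bad sets $\{|X-X_{m_k}|>2^{-k/p}\}$ to open sets $U_k$ of comparable capacity by invoking ``outer regularity of $c$'', claiming it follows because $\mathcal{P}$ is weakly compact and $c$ is a Choquet capacity. Choquet's capacitability theorem gives \emph{inner} approximation by compact sets (this is item (5) of Proposition \ref{pro2.8}); it gives no outer approximation by open sets, and for an upper capacity $c(A)=\sup_{P\in\mathcal{P}}P(A)$ outer regularity is false in general, because the open set witnessing $P(U)\le P(A)+\varepsilon$ depends on $P$ and cannot always be chosen uniformly over $\mathcal{P}$. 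A concrete counterexample: on $\Omega=[0,1]$ take the weakly compact family $\mathcal{P}=\{\tfrac12(\delta_x+\delta_{x+1/2}):x\in[0,1/2]\}$ and $A=(\mathbb{Q}\cap[0,1/2])\cup([1/2,1]\setminus \mathbb{Q})$; then $c(A)=\tfrac12$, while every open $U\supset A$ has $c(U)=1$ by a Baire category argument. So this step of your proof does not go through as written.

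The gap is repairable by the standard device of \cite{DHP11}: instead of comparing $X_{m_k}$ with the merely measurable limit $X$, compare consecutive terms. Choose the subsequence so that $\mathbb{\hat{E}}[|X_{m_{k+1}}-X_{m_k}|^{p}]\le 2^{-k(p+1)}$; since $X_{m_k},X_{m_{k+1}}\in L_{ip}(\Omega)$ are continuous, the bad set $\{|X_{m_{k+1}}-X_{m_k}|>2^{-k/p}\}$ is \emph{open}, so $F_N:=\bigcap_{k\ge N}\{|X_{m_{k+1}}-X_{m_k}|\le 2^{-k/p}\}$ is closed with $c(F_N^{c})$ small by subadditivity (Proposition \ref{pro2.8}(2)), and on $F_N$ the sequence is uniformly Cauchy, hence converges uniformly to a continuous function that coincides with $X$ q.s.; no outer regularity is needed. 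A smaller but analogous slip occurs in the last step of $\supseteq$: Proposition \ref{pro2.8}(5) applied to $\Omega$ produces a compact $K$ with $c(K)$ close to $1$, which, since $c$ is not additive, does \emph{not} make $c(K^{c})$ small; what you need there is tightness of the weakly compact family $\mathcal{P}$ (Prokhorov), which directly yields $K$ with $\sup_{P\in\mathcal{P}}P(K^{c})<\varepsilon$. The remaining steps (the tail estimate in $\subseteq$, the truncation and Tietze extension in $\supseteq$) are fine.
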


\begin{theorem}
\label{the2.10}(\cite{DHP11,P10}) $\mathbb{\hat{E}}[\cdot]$ satisfies the
following properties:

\begin{description}
\item[(a)] If $X_{n}\in \mathcal{L}(\Omega)\uparrow X\in \mathcal{L}(\Omega)$
q.s. and $-\mathbb{\hat{E}}[-X_{1}]>-\infty$, then $\mathbb{\hat{E}}%
[X_{n}]\uparrow \mathbb{\hat{E}}[X]$;

\item[(b)] If $P_{n}\in \mathcal{P}$ converges weakly to $P\in \mathcal{P}$,
then $E_{P_{n}}[X]\rightarrow E_{P}[X]$ for each $X\in L_{G}^{1}(\Omega)$;

\item[(c)] If $X_{n}\in L_{G}^{1}(\Omega)\downarrow X\in \mathcal{L}(\Omega)$
q.s., then $\mathbb{\hat{E}}[X_{n}]\downarrow \mathbb{\hat{E}}[X]$.
\end{description}
\end{theorem}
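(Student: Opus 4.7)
The plan is to work throughout with the representation $\mathbb{\hat{E}}[X]=\sup_{P\in\mathcal{P}}E_{P}[X]$ from Theorem \ref{the2.7}, push each statement down to the level of a single probability measure $P\in\mathcal{P}$ where classical tools apply, and then commute the supremum with the limit. Throughout I will freely identify random variables that agree quasi-surely, which is harmless since each $P\in\mathcal{P}$ is absolutely continuous with respect to $c$.

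For part (a), the condition $-\mathbb{\hat{E}}[-X_{1}]>-\infty$ means $\mathbb{\hat{E}}[-X_{1}]<\infty$, so for every $P\in\mathcal{P}$ one has $E_{P}[X_{1}^{-}]=E_{P}[(-X_{1})^{+}]\leq\mathbb{\hat{E}}[-X_{1}]<\infty$, and because $X_{n}\geq X_{1}$ also $E_{P}[X_{n}^{-}]\leq E_{P}[X_{1}^{-}]<\infty$. Classical monotone convergence then gives $E_{P}[X_{n}]\uparrow E_{P}[X]$ for each $P$. Since the double-indexed family $\{E_{P}[X_{n}]\}$ is monotone in $n$, the two suprema commute: $\lim_{n}\sup_{P}E_{P}[X_{n}]=\sup_{P}\sup_{n}E_{P}[X_{n}]=\sup_{P}E_{P}[X]=\mathbb{\hat{E}}[X]$, which is the claim.

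For part (b), I would combine the characterization of $L_{G}^{1}(\Omega)$ in Theorem \ref{the2.9} with a Tietze-extension argument. Given $X\in L_{G}^{1}(\Omega)$, let $X_{k}:=(X\wedge k)\vee(-k)$; these are bounded and q.c., and $\mathbb{\hat{E}}[|X-X_{k}|]\leq\mathbb{\hat{E}}[|X|I_{\{|X|>k\}}]\to 0$. For each fixed $k$ and $\varepsilon>0$, use quasi-continuity to pick a closed $F$ with $c(F^{c})<\varepsilon$ on which $X_{k}$ is continuous, and extend $X_{k}|_{F}$ by Tietze to a bounded continuous $\tilde{X}_{k}$ on $\Omega$ with $\|\tilde{X}_{k}\|_{\infty}\leq\|X_{k}\|_{\infty}$; then $|E_{Q}[X_{k}]-E_{Q}[\tilde{X}_{k}]|\leq 2\|X_{k}\|_{\infty}c(F^{c})$ uniformly in $Q\in\mathcal{P}$. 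Splitting $|E_{P_{n}}[X]-E_{P}[X]|$ as (truncation error) + $|E_{P_{n}}[X_{k}]-E_{P}[X_{k}]|$ + (truncation error) and then applying this q.c.-to-continuous bound reduces the middle term to $|E_{P_{n}}[\tilde{X}_{k}]-E_{P}[\tilde{X}_{k}]|$, which tends to $0$ by the weak convergence of $P_{n}$. Sending $n\to\infty$, then $\varepsilon\to 0$, then $k\to\infty$ yields the conclusion.

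For part (c), only $\lim_{n}\mathbb{\hat{E}}[X_{n}]\leq\mathbb{\hat{E}}[X]$ is nontrivial, since monotonicity of $\mathbb{\hat{E}}$ gives the reverse. Pick $P_{n}\in\mathcal{P}$ with $E_{P_{n}}[X_{n}]\geq\mathbb{\hat{E}}[X_{n}]-1/n$. By the weak compactness of $\mathcal{P}$, extract a subsequence $P_{n_{j}}\rightharpoonup P^{\ast}\in\mathcal{P}$. For any fixed $m$ and all $n_{j}\geq m$, monotonicity gives $E_{P_{n_{j}}}[X_{n_{j}}]\leq E_{P_{n_{j}}}[X_{m}]$; since $X_{m}\in L_{G}^{1}(\Omega)$, part (b) lets me pass $j\to\infty$ to obtain $\limsup_{j}E_{P_{n_{j}}}[X_{n_{j}}]\leq E_{P^{\ast}}[X_{m}]$. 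Finally, $X_{m}\downarrow X$ with $X_{m}\leq X_{1}\in L_{G}^{1}(\Omega)$, so $E_{P^{\ast}}[X_{1}^{+}]\leq\mathbb{\hat{E}}[|X_{1}|]<\infty$, and classical downward monotone convergence under the probability $P^{\ast}$ gives $E_{P^{\ast}}[X_{m}]\downarrow E_{P^{\ast}}[X]\leq\mathbb{\hat{E}}[X]$, closing the estimate.

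The main obstacle is really the Tietze-style passage in (b) from q.c. to genuinely continuous test functions, since the definition of weak convergence of measures only directly handles $C_{b}$; once (b) is in hand, (c) is essentially a compactness-plus-MCT exercise using weakly optimal measures, and (a) is immediate from the representation.
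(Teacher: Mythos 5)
The paper does not prove Theorem \ref{the2.10}; it is quoted from \cite{DHP11} and \cite{P10}, so there is no internal proof to compare against. Your argument is essentially the standard one behind those references: reduce everything to a single $P\in\mathcal{P}$ via the representation $\mathbb{\hat{E}}[\cdot]=\sup_{P}E_{P}[\cdot]$, use classical monotone convergence, handle (b) by truncation plus the quasi-continuity characterization of $L_{G}^{1}(\Omega)$ in Theorem \ref{the2.9} and a Tietze extension, and obtain (c) from weak compactness of $\mathcal{P}$ together with (b). Parts (b) and (c) are correct as written: the order of limits in (b) ($n\rightarrow\infty$, then $\varepsilon\rightarrow0$, then $k\rightarrow\infty$) is consistent with the uniform-in-$Q$ bound $2\Vert X_{k}\Vert_{\infty}c(F^{c})$ and the truncation bound $\mathbb{\hat{E}}[|X|I_{\{|X|>k\}}]$, and in (c) the diagonal estimate $E_{P_{n_{j}}}[X_{n_{j}}]\leq E_{P_{n_{j}}}[X_{m}]$ followed by (b) and downward monotone convergence under $P^{\ast}$ is exactly the right compactness argument.

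There is one genuine error, in part (a): the inequality $E_{P}[(-X_{1})^{+}]\leq\mathbb{\hat{E}}[-X_{1}]$ is false. Monotonicity only gives $E_{P}[(-X_{1})^{+}]\geq E_{P}[-X_{1}]$, not the reverse; concretely, if $-X_{1}$ is centered under every $P$ then $\mathbb{\hat{E}}[-X_{1}]$ can be $0$ while $E_{P}[(-X_{1})^{+}]>0$. The conclusion you need, $E_{P}[X_{1}^{-}]<\infty$, is nevertheless true, but for a different reason: since $X_{1}\in\mathcal{L}(\Omega)$, the extended-real expectation $E_{P}[X_{1}]$ exists, and $E_{P}[X_{1}]=-E_{P}[-X_{1}]\geq-\mathbb{\hat{E}}[-X_{1}]>-\infty$; an expectation that exists and is $>-\infty$ forces $E_{P}[X_{1}^{-}]<\infty$, because otherwise $E_{P}[X_{1}^{+}]$ would have to be finite and $E_{P}[X_{1}]=-\infty$. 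With that one-line repair the integrable lower bound for the classical monotone convergence theorem is in place, and the interchange $\lim_{n}\sup_{P}=\sup_{P}\sup_{n}$ finishes (a) as you wrote it.
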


\begin{theorem}
\label{the2.11}(\cite{HJPS}) We have

\begin{description}
\item[(1)] For fixed $A\in \mathcal{F}_{t}$, if $\xi_{1}$, $\xi_{2}\in
L_{G}^{1}(\Omega)$ such that $\xi_{1}I_{A}=\xi_{2}I_{A}$ q.s., then
$\mathbb{\hat{E}}_{t}[\xi_{1}]I_{A}=\mathbb{\hat{E}}_{t}[\xi_{2}]I_{A}$ q.s.;

\item[(2)] Let $(A_{i})_{i=1}^{n}$ be an $\mathcal{F}_{t}$-partition of
$\Omega$. Then for $\xi_{i}\in L_{G}^{1}(\Omega)$, $i\leq n$, we have
$\mathbb{\hat{E}}[\sum_{i=1}^{n}\xi_{i}I_{A_{i}}]=\mathbb{\hat{E}}[\sum
_{i=1}^{n}\mathbb{\hat{E}}_{t}[\xi_{i}]I_{A_{i}}]$.
\end{description}
\end{theorem}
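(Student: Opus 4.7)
The plan is to prove (1) first by reducing to a nonnegative localization lemma, and then derive (2) from (1) by approximating the indicators $I_{A_i}$ from within $L^1_G(\Omega_t)$.

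For (1), sub-additivity of $\mathbb{\hat{E}}_t$ applied to $\xi_1 - \xi_2$ and $\xi_2 - \xi_1$, together with monotonicity, yields $|\mathbb{\hat{E}}_t[\xi_1] - \mathbb{\hat{E}}_t[\xi_2]| \le \mathbb{\hat{E}}_t[|\xi_1 - \xi_2|]$ q.s. Setting $\zeta := |\xi_1 - \xi_2| \in L^1_G(\Omega)$, which is nonnegative and satisfies $\zeta I_A = 0$ q.s., the claim reduces to the localization lemma: \emph{if $\zeta \in L^1_G(\Omega)$ with $\zeta \ge 0$ and $\zeta I_A = 0$ q.s., then $I_A\mathbb{\hat{E}}_t[\zeta] = 0$ q.s.} I would first truncate to bounded $\zeta$ via $\zeta \wedge N \uparrow \zeta$ and monotone convergence (Theorem~2.10(a) together with the tower property~(v)); then construct a sequence $\eta_n \in L^1_G(\Omega_t)$ with $0 \le \eta_n \le 1$ and $\eta_n \uparrow I_{A^c}$ q.s. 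Such a sequence can be built from compact inner regularity of the $G$-capacity (Proposition~2.8(5)) combined with bounded-Lipschitz cylindrical approximations $\psi_m \in C_b(\Omega_t) \subset L^1_G(\Omega_t)$ of indicators of the compact subsets of $A^c$.

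Given $\eta_n$, property~(iv) gives $\mathbb{\hat{E}}_t[\zeta\eta_n] = \eta_n \mathbb{\hat{E}}_t[\zeta]$, and sub-additivity yields $\mathbb{\hat{E}}_t[\zeta] \le \mathbb{\hat{E}}_t[\zeta\eta_n] + \mathbb{\hat{E}}_t[\zeta(1-\eta_n)]$. Since $I_A \le 1 - \eta_n$ pointwise, one deduces
\[
0 \le I_A\mathbb{\hat{E}}_t[\zeta] \le (1-\eta_n)\mathbb{\hat{E}}_t[\zeta] \le \mathbb{\hat{E}}_t[\zeta(1-\eta_n)] \quad \text{q.s.}
\]
Applying $\mathbb{\hat{E}}$ and the tower, $\mathbb{\hat{E}}[I_A\mathbb{\hat{E}}_t[\zeta]] \le \mathbb{\hat{E}}[\zeta(1-\eta_n)]$. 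Since $\zeta(1-\eta_n) \in L^1_G(\Omega)$ decreases q.s.\ to $\zeta I_A = 0$, Theorem~2.10(c) forces $\mathbb{\hat{E}}[\zeta(1-\eta_n)] \downarrow 0$, hence $I_A\mathbb{\hat{E}}_t[\zeta] = 0$ q.s., completing part~(1).

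For (2), approximate each $I_{A_i}$ by $\eta_{i,n} \in L^1_G(\Omega_t) \cap [0,1]$ with $\eta_{i,n} \uparrow I_{A_i}$ q.s., and set $X_n := \sum_i \xi_i\eta_{i,n} \in L^1_G(\Omega)$. By property~(iv) and sub-additivity, $\mathbb{\hat{E}}_t[X_n] \le \sum_i \eta_{i,n}\mathbb{\hat{E}}_t[\xi_i]$; taking $\mathbb{\hat{E}}$, using the tower, and passing $n \to \infty$ via dominated convergence yields $\mathbb{\hat{E}}[\sum_i \xi_i I_{A_i}] \le \mathbb{\hat{E}}[\sum_i I_{A_i}\mathbb{\hat{E}}_t[\xi_i]]$. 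For the reverse inequality, I would orthogonalize to a quasi-partition $\bar\eta_{i,n} := \eta_{i,n}\prod_{j<i}(1-\eta_{j,n}) \in L^1_G(\Omega_t) \cap [0,1]$ satisfying $\bar\eta_{i,n} \to I_{A_i}$ q.s.\ and $\sum_i \bar\eta_{i,n} \uparrow 1$ q.s.; the approximate disjointness of supports, combined with part~(1) applied on the level sets, then allows the sub-additivity bound to be replaced by an equality in the limit.

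The main obstacle is the construction in part~(1) of the approximating sequence $\eta_n \uparrow I_{A^c}$ for a general Borel set $A \in \mathcal{F}_t$: this requires compact inner regularity of the $G$-capacity together with the nontrivial density fact $C_b(\Omega_t) \subset L^1_G(\Omega_t)$, both specific features of the $G$-framework. A secondary subtlety in part~(2) is that $\sum_i \xi_i I_{A_i}$ need not itself lie in $L^1_G(\Omega)$, so the tower property can only be applied through the $L^1_G$-valued approximations $X_n$; turning the one-sided sub-additivity inequality into an equality in the limit is where the partition structure of the $A_i$'s is crucially used.
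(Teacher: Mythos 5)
The paper does not actually prove Theorem~\ref{the2.11} --- it is quoted from \cite{HJPS} --- so your argument can only be assessed on its own terms, and it contains two genuine gaps.

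The first and most serious is the construction, for an \emph{arbitrary} Borel set $A\in\mathcal{F}_{t}$, of $\eta_{n}\in L_{G}^{1}(\Omega_{t})$ with $0\leq\eta_{n}\leq1$ and $\eta_{n}\uparrow I_{A^{c}}$ quasi-surely. Proposition~\ref{pro2.8}(5) gives compact sets $K_{m}\subset A^{c}$ with $c(K_{m})\uparrow c(A^{c})$, but $c$ is only sub-additive, so this does not imply that $A^{c}\setminus\bigcup_{m}K_{m}$ is polar: from $c(A^{c})\leq c(\bigcup_{m}K_{m})+c(A^{c}\setminus\bigcup_{m}K_{m})$ no control on the last term follows. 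For the $G$-capacity it is in general false that a Borel set coincides, up to a polar set, with a countable union of compacts; this is precisely the obstruction that forces the paper to introduce the restricted classes $L_{G}^{1^{\ast}}(\Omega)$, $L_{G}^{\ast1}(\Omega)$, $L_{G}^{1_{\ast}^{\ast}}(\Omega)$ and to establish membership of indicators only for closed and open sets (Proposition~\ref{pro3.6} and the remark following it). So your localization lemma is only proved for such special $A$, not for all $A\in\mathcal{F}_{t}$ as the theorem requires. (Granting the $\eta_{n}$, the remainder of your part~(1) --- property (iv), sub-additivity, the tower property, and Theorem~\ref{the2.10}(c) --- is correct.)

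The second gap is the reverse inequality in part~(2). The tools you invoke (sub-additivity of $\mathbb{\hat{E}}_{t}$, property (iv), the tower property) only ever produce $\mathbb{\hat{E}}[\sum_{i}\xi_{i}I_{A_{i}}]\leq\mathbb{\hat{E}}[\sum_{i}\mathbb{\hat{E}}_{t}[\xi_{i}]I_{A_{i}}]$: indeed $\mathbb{\hat{E}}_{t}[\sum_{i}\xi_{i}\eta_{i,n}]\leq\sum_{i}\eta_{i,n}\mathbb{\hat{E}}_{t}[\xi_{i}]$, and applying $\mathbb{\hat{E}}$ preserves this inequality; orthogonalizing the $\eta_{i,n}$ into a quasi-partition does not reverse it. The content of the reverse inequality is that sub-additivity is \emph{tight in expectation}, i.e.\ that the representing family $\mathcal{P}$ is stable under pasting along an $\mathcal{F}_{t}$-partition --- the kind of statement proved in Lemma~\ref{le2.23} and exploited in Theorem~\ref{the2.15} --- or, alternatively, an argument on the explicit cylinder structure of $L_{ip}(\Omega)$ and the definition of $\mathbb{\hat{E}}_{t}$. ``Approximate disjointness of supports combined with part~(1)'' does not supply this. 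Note moreover that part~(1) is itself an immediate corollary of part~(2) (take the partition $\{A,A^{c}\}$, $\xi_{2}=0$, and use that $\mathbb{\hat{E}}_{t}[\zeta]I_{A}\geq0$ has zero expectation), so the hard direction of~(2) is where the entire theorem lives, and your sketch does not engage with it.
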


We can define the following convergence.

\begin{itemize}
\item quasi sure (q.s.) convergence:\ $X_{n}\overset{\text{q.s.}%
}{\longrightarrow}X$ means \thinspace$c(\left \{  X_{n}\not \rightarrow
X\right \}  =0$;

\item convergence in capacity $c$: $X_{n}\overset{\text{c}}{\longrightarrow}X$
means \thinspace$\lim_{n\rightarrow \infty}c(\{|X_{n}-X|\geq \varepsilon \})=0$
for each $\varepsilon>0$;

\item $\mathbb{L}^{p}$-convergence: Let $\left \{  X_{n}\right \}
_{n=1}^{\infty}\subset \mathbb{L}^{p}(\Omega)$, $X\in \mathbb{L}^{p}(\Omega)$
for $p\geq1$. $X_{n}\overset{\mathbb{L}^{p}}{\rightarrow}X$ means
\thinspace$\lim_{n\rightarrow \infty}\mathbb{\hat{E}}[|X_{n}-X|^{p}]=0$.
\end{itemize}

\begin{theorem}
\label{the2.12} We have

\begin{description}
\item[(1)] If $X_{n}\overset{\mathbb{L}^{p}}{\rightarrow}X$ for $p\geq1$, then
$X_{n}\overset{\text{c}}{\longrightarrow}X$;

\item[(2)] If $X_{n}\overset{\text{c}}{\longrightarrow}X$, then there exists a
subsequence $\left \{  X_{n_{k}}\right \}  _{k=1}^{\infty}$ such that $X_{n_{k}%
}\overset{\text{q.s.}}{\longrightarrow}X$.
\end{description}
\end{theorem}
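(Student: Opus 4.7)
\noindent\textbf{Proof proposal for Theorem \ref{the2.12}.}

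For part (1), the plan is to apply a Chebyshev/Markov-type inequality uniformly over the representing set $\mathcal{P}$. Fix $\varepsilon>0$. For any $P\in\mathcal{P}$ the classical Markov inequality gives
\[
P(\{|X_{n}-X|\geq\varepsilon\})\leq \frac{E_{P}[|X_{n}-X|^{p}]}{\varepsilon^{p}}\leq \frac{\mathbb{\hat{E}}[|X_{n}-X|^{p}]}{\varepsilon^{p}},
\]
where the last inequality uses the representation $\mathbb{\hat{E}}[\cdot]=\sup_{P\in\mathcal{P}}E_{P}[\cdot]$ extended to $\mathcal{L}(\Omega)$. Taking the supremum over $P\in\mathcal{P}$ on the left gives
\[
c(\{|X_{n}-X|\geq\varepsilon\})\leq \frac{\mathbb{\hat{E}}[|X_{n}-X|^{p}]}{\varepsilon^{p}}\longrightarrow 0,
\]
which is exactly convergence in capacity.

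For part (2), the strategy is the classical Borel--Cantelli extraction, but with the subadditivity property of $c$ (Proposition \ref{pro2.8}~(2)) playing the role of countable additivity. By the assumption, for each $k\geq 1$ I can choose an index $n_{k}$ (taken strictly increasing in $k$) so that
\[
c\!\left(\{|X_{n_{k}}-X|\geq 1/k\}\right)\leq 2^{-k}.
\]
Set $A_{k}:=\bigcup_{j\geq k}\{|X_{n_{j}}-X|\geq 1/j\}$ and $A:=\bigcap_{k\geq 1}A_{k}=\limsup_{j}\{|X_{n_{j}}-X|\geq 1/j\}$. By countable subadditivity,
\[
c(A)\leq c(A_{k})\leq \sum_{j\geq k}2^{-j}=2^{-(k-1)}\longrightarrow 0,
\]
so $c(A)=0$, i.e.\ $A$ is polar. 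For every $\omega\notin A$ there exists $k=k(\omega)$ with $\omega\notin A_{k}$, which means $|X_{n_{j}}(\omega)-X(\omega)|<1/j$ for all $j\geq k$, hence $X_{n_{j}}(\omega)\to X(\omega)$. Therefore $\{X_{n_{k}}\not\to X\}\subset A$ is polar, proving q.s.\ convergence along the subsequence $\{n_{k}\}$.

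I do not foresee a serious obstacle: both arguments are the verbatim nonlinear analogues of standard measure-theoretic facts, and the only facts needed beyond classical probability are the representation $\mathbb{\hat{E}}[\cdot]=\sup_{P\in\mathcal{P}}E_{P}[\cdot]$ (applied pointwise in $P$ before taking the sup) and the countable subadditivity of $c$ from Proposition \ref{pro2.8}. The mildest point of care is that $|X_{n}-X|^{p}$ must lie in $\mathcal{L}(\Omega)$ so that the Markov step is legitimate uniformly in $P$; this follows from $X_{n},X\in\mathbb{L}^{p}(\Omega)$ since then $E_{P}[|X_{n}-X|^{p}]$ exists for every $P\in\mathcal{P}$.
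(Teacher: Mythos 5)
Your proposal is correct: part (1) is the Markov/Chebyshev bound applied under each $P\in\mathcal{P}$ followed by taking the supremum, and part (2) is the standard Borel--Cantelli extraction using the countable subadditivity of $c$ from Proposition \ref{pro2.8}. This is exactly the classical argument that the paper invokes when it omits the proof as being ``similar to classical case,'' so there is nothing to add.
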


\begin{proof}
We omit the proof which is similar to classical case.
\end{proof}

We set%
\[
\mathcal{P}_{\max}=\{P\in \mathcal{M}:E_{P}[X]\leq \mathbb{\hat{E}}[X]\text{ for
each }X\in L_{ip}(\Omega)\}.
\]
It is easy to check that $\mathcal{P}\subset \mathcal{P}_{\max}$ and
$\mathcal{P}_{\max}$ represents $\mathbb{\hat{E}}$. Similarly, we define%
\[
\tilde{c}(A)=\sup_{P\in \mathcal{P}_{\max}}P(A)\text{ for each }A\in
\mathcal{B}(\Omega).
\]
For each $X\in L^{0}(\Omega)$ such that $E_{P}[X]$ exists for each
$P\in \mathcal{P}_{\max}$, we define%
\[
\mathbb{\tilde{E}}[X]=\sup_{P\in \mathcal{P}_{\max}}E_{P}[X].
\]

\begin{proposition}
\label{pro2.13} For each $A\in \mathcal{B}(\Omega)$, we have $c(A)=\tilde
{c}(A)$.
\end{proposition}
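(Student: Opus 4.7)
The plan is to establish the two inequalities $c(A)\leq\tilde{c}(A)$ and $\tilde{c}(A)\leq c(A)$ separately. The first is automatic: since $\mathbb{\hat{E}}[X]=\sup_{Q\in\mathcal{P}}E_{Q}[X]$ for $X\in L_{ip}(\Omega)$, every $Q\in\mathcal{P}$ lies in $\mathcal{P}_{\max}$, so $c(A)\leq\tilde{c}(A)$ for each Borel $A$.

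For the reverse inequality, I would first reduce to compact sets. The capacity $c$ is inner regular by Proposition~\ref{pro2.8}(5). For $\tilde{c}$, every $P\in\mathcal{P}_{\max}$ is a Borel probability measure on the Polish space $\Omega$ and is therefore inner regular; an $\varepsilon$-argument (for each $P$ pick a compact $K_{P}\subset A$ with $P(K_{P})>P(A)-\varepsilon$) then gives $\tilde{c}(A)=\sup\{\tilde{c}(K):K\subset A,\,K\text{ compact}\}$. So it suffices to prove $\tilde{c}(K)\leq c(K)$ for each compact $K\subset\Omega$.

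To handle a fixed compact $K$ I would use the cut-offs $\psi_{n}(\omega):=(1-n\rho(\omega,K))^{+}$: each $\psi_{n}$ is continuous, bounded by $1$, $n$-Lipschitz in $\rho$, and $\psi_{n}\downarrow I_{K}$ pointwise. Being bounded and continuous, $\psi_{n}\in L_{G}^{1}(\Omega)$ by Theorem~\ref{the2.9}, so Theorem~\ref{the2.10}(c) yields $\mathbb{\hat{E}}[\psi_{n}]\downarrow\mathbb{\hat{E}}[I_{K}]=c(K)$, while ordinary dominated convergence under any $P\in\mathcal{P}_{\max}$ gives $E_{P}[\psi_{n}]\downarrow P(K)$. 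The problem thus collapses to establishing, for each fixed $n$, the single inequality $E_{P}[\psi_{n}]\leq\mathbb{\hat{E}}[\psi_{n}]$. To bootstrap the defining $\mathcal{P}_{\max}$-domination from $L_{ip}(\Omega)$ to $\psi_{n}$, I would introduce the polygonal interpolation $\pi_{m}:\Omega\to\Omega$ on the grid $\{k/m:0\leq k\leq m^{2}\}$ (held constant beyond time $m$) and set $\psi_{n}^{(m)}:=\psi_{n}\circ\pi_{m}$. Since $\pi_{m}$ is $1$-Lipschitz with respect to $\rho$, the composition $\psi_{n}^{(m)}$ is a Lipschitz function of finitely many coordinates $\omega_{k/m}$, hence lies in $L_{ip}(\Omega)$, giving $E_{P}[\psi_{n}^{(m)}]\leq\mathbb{\hat{E}}[\psi_{n}^{(m)}]$ by the very definition of $\mathcal{P}_{\max}$.

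The main obstacle is then passing to the limit $m\to\infty$ on both sides. The $P$-side is routine: $\psi_{n}^{(m)}\to\psi_{n}$ pointwise with uniform bound $1$, so classical dominated convergence gives $E_{P}[\psi_{n}^{(m)}]\to E_{P}[\psi_{n}]$. For the sublinear side I need $\mathbb{\hat{E}}[|\psi_{n}^{(m)}-\psi_{n}|]\to0$, and this is where real work is required. I would combine three ingredients: (i) the Lipschitz bound $|\psi_{n}^{(m)}-\psi_{n}|\leq n\rho(\pi_{m}\omega,\omega)$; (ii) the tightness of $\mathcal{P}$ from its weak compactness, producing for each $\varepsilon>0$ a compact $\tilde{K}\subset\Omega$ with $\sup_{Q\in\mathcal{P}}Q(\tilde{K}^{c})<\varepsilon$; and (iii) the Arzel\`a--Ascoli characterisation of compact subsets of $C_{0}^{d}(\mathbb{R}^{+})$, which forces paths in $\tilde{K}$ to be equicontinuous on each compact time interval, so that $\rho(\pi_{m}\omega,\omega)\to0$ uniformly in $\omega\in\tilde{K}$. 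Splitting $E_{Q}[\rho(\pi_{m}\omega,\omega)]$ into its $\tilde{K}$ and $\tilde{K}^{c}$ parts then gives a bound independent of $Q\in\mathcal{P}$ that tends to $0$, which is exactly what is needed. Once this is in place, letting $m\to\infty$ and then $n\to\infty$ yields $P(K)\leq c(K)$, and the reduction step from the second paragraph closes the argument for all Borel $A$.
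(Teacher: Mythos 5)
Your argument is correct, and its overall skeleton coincides with the paper's: the inclusion $\mathcal{P}\subset\mathcal{P}_{\max}$ gives $c(A)\leq\tilde{c}(A)$ for free; inner regularity of $c$ (Proposition \ref{pro2.8}(5)) and of each Borel probability on the Polish space $\Omega$ reduces the reverse inequality to compact (in the paper, closed) sets; and the indicator is then approximated from above by bounded $\rho$-Lipschitz cut-offs, with Theorem \ref{the2.10}(c) on one side and dominated convergence on the other. Where you genuinely diverge is in justifying the single inequality $E_{P}[\psi_{n}]\leq\mathbb{\hat{E}}[\psi_{n}]$ for $P\in\mathcal{P}_{\max}$. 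The paper gets this in one line: $\mathbb{\hat{E}}$ and $\mathbb{\tilde{E}}$ agree on $L_{ip}(\Omega)$ by definition of $\mathcal{P}_{\max}$, hence on its $\Vert\cdot\Vert_{1}$-completion $L_{G}^{1}(\Omega)$, and $C_{b}(\Omega)\subset L_{G}^{1}(\Omega)$ by Theorem \ref{the2.9}. You instead reprove the needed special case from scratch: polygonal interpolation places $\psi_{n}\circ\pi_{m}$ in $L_{ip}(\Omega)$, where the domination holds by definition, and the passage $m\rightarrow\infty$ on the sublinear side is controlled by tightness of the weakly compact $\mathcal{P}$ together with Arzel\`a--Ascoli. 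Your route is longer but self-contained at this point, in effect reconstructing by hand the fragment of Theorem \ref{the2.9} that the paper simply cites; the paper's version is shorter but hides the same tightness argument inside that characterisation of $L_{G}^{1}(\Omega)$. Both are valid proofs of the proposition.
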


\begin{proof}
It follows from the definition of $L_{G}^{1}(\Omega)$ that $\mathbb{\hat{E}%
}[X]=$ $\mathbb{\tilde{E}}[X]$ for each $X\in L_{G}^{1}(\Omega)$. By Theorem
\ref{the2.9}, we get $\mathbb{\hat{E}}[X]=$ $\mathbb{\tilde{E}}[X]$ for each
$X\in C_{b}(\Omega)$. For each fixed closed set $F\subset \Omega$, we can
choose $X_{n}\in C_{b}(\Omega)$ such that $X_{n}\downarrow I_{F}$. Thus, by
Theorem \ref{the2.10}, we obtain $\mathbb{\hat{E}}[I_{F}]=\mathbb{\tilde{E}%
}[I_{F}]$, which implies $c(F)=\tilde{c}(F)$. By Proposition \ref{pro2.8}, we
get $c(A)=\tilde{c}(A)$ for each $A\in \mathcal{B}(\Omega)$.
\end{proof}

\begin{remark}
\label{rem2.14} Let $\mathcal{P}_{1}$ and $\mathcal{P}_{2}$ be two weakly
compact sets that represent $\mathbb{\hat{E}}$. From the above proposition, we
can deduce that%
\[
\sup_{P\in \mathcal{P}_{1}}P(A)=\sup_{P\in \mathcal{P}_{2}}P(A)\text{ for each
}A\in \mathcal{B}(\Omega).
\]

\end{remark}

\subsection{The representation of conditional $G$-expectations}

Let $(\Omega,L_{ip}(\Omega),\mathbb{\hat{E}}[\cdot])$ be the $G$-expectation
space. For each given $t\geq0$, we set $B_{s}^{t}=B_{s}-B_{t}$ and%
\[
L_{ip}(\Omega^{t})=\{ \varphi(B_{t_{1}}^{t},...,B_{t_{n}}^{t}):n\geq
1,t_{1},...,t_{n}\in \lbrack t,\infty),\varphi \in C_{b.Lip}(\mathbb{R}^{d\times
n})\}.
\]
In this subsection, we suppose that $\mathcal{P}=\mathcal{P}_{\max}$. For each
fixed $t\geq0$ and $P\in \mathcal{P}$, we define%
\[
\mathcal{P}(t,P)=\{Q\in \mathcal{P}:E_{Q}[X]=E_{P}[X],\forall X\in
L_{ip}(\Omega_{t})\}.
\]

The following representation of the
conditional $G$-expectation was obtained in \cite{STZ}.

\begin{theorem}
\label{the2.15} For each $X\in L_{G}^{1}(\Omega)$, we have, for each
$P\in \mathcal{P}$,%
\[
\mathbb{\hat{E}}_{t}[X]=\underset{Q\in \mathcal{P}(t,P)}{ess\sup}^{P}%
E_{Q}[X|\mathcal{F}_{t}]\text{ \  \ }P\text{-a.s.}.
\]

\end{theorem}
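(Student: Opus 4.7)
The plan is to establish the representation first for cylindrical $X \in L_{ip}(\Omega)$, where Definition~\ref{def2.5}(ii) gives an explicit formula for $\mathbb{\hat{E}}_t[X]$, and then to extend to $X \in L_G^1(\Omega)$ by density. For $X = \varphi(B_{t_1}-B_{t_0},\ldots,B_{t_m}-B_{t_{m-1}})$ with $t = t_k$ in the partition, combining Definition~\ref{def2.5}(ii) with Theorem~\ref{the2.7} yields the pointwise identity
$$\mathbb{\hat{E}}_t[X](\omega) \;=\; \sup_{Q'\in\mathcal{P}} E_{Q'}\bigl[\varphi^{t,\omega}\bigr],$$
where $\varphi^{t,\omega}$ denotes $\varphi$ with the pre-$t$ increments frozen to those of $\omega$ and the post-$t$ increments treated as canonical coordinates on $\Omega$.

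For the upper inequality, I would fix $Q \in \mathcal{P}(t,P)$ and take a regular conditional probability $\omega\mapsto Q^{\omega}$ of $Q$ given $\mathcal{F}_t$. Shifting $Q^\omega$ onto the post-$t$ increments produces a probability on $\Omega$ which, by maximality of $\mathcal{P} = \mathcal{P}_{\max}$, lies in $\mathcal{P}$ for $P$-a.e.~$\omega$; this is the crucial role of $\mathcal{P}_{\max}$, since an arbitrary weakly compact representing set need not be stable under such disintegration. Combined with the formula above, this gives $E_Q[X|\mathcal{F}_t](\omega) = E_{Q^\omega}[\varphi^{t,\omega}] \leq \mathbb{\hat{E}}_t[X](\omega)$ $P$-a.s., so $\mathbb{\hat{E}}_t[X]$ dominates the essential supremum.

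For the reverse inequality I would use measurable selection and pasting. Given $\varepsilon > 0$, a Jankov--von Neumann argument applied to the supremum above produces an $\mathcal{F}_t$-measurable map $\omega \mapsto Q^{\varepsilon,\omega}\in\mathcal{P}$ with $E_{Q^{\varepsilon,\omega}}[\varphi^{t,\omega}] \geq \mathbb{\hat{E}}_t[X](\omega) - \varepsilon$. Then concatenate at time $t$: define $Q_\varepsilon$ to agree with $P$ on $\mathcal{F}_t$ and, conditionally on $\mathcal{F}_t$ at $\omega$, to follow the shifted $Q^{\varepsilon,\omega}$ on increments beyond $t$. The pasting stability of $\mathcal{P}_{\max}$ gives $Q_\varepsilon \in \mathcal{P}(t,P)$, and by construction $E_{Q_\varepsilon}[X|\mathcal{F}_t] \geq \mathbb{\hat{E}}_t[X] - \varepsilon$ $P$-a.s.; letting $\varepsilon \downarrow 0$ closes the cylindrical case.

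To pass from $L_{ip}(\Omega)$ to $L_G^1(\Omega)$, choose $X_n \to X$ with $\mathbb{\hat{E}}[|X_n - X|]\to 0$; contractivity of $\mathbb{\hat{E}}_t$ gives $\mathbb{\hat{E}}_t[X_n]\to\mathbb{\hat{E}}_t[X]$ in $L_G^1(\Omega_t)$, while for any $Q\in\mathcal{P}(t,P)$ the identity $E_P = E_Q$ on $\mathcal{F}_t$ gives
$$E_P\bigl[|E_Q[X_n|\mathcal{F}_t] - E_Q[X|\mathcal{F}_t]|\bigr] \;\leq\; E_Q[|X_n - X|] \;\leq\; \mathbb{\hat{E}}[|X_n - X|] \;\to\; 0$$
uniformly in $Q\in\mathcal{P}(t,P)$, so both sides of the claimed equality pass to the limit. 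The hardest step is the reverse inequality: producing a measurable near-optimal family $\{Q^{\varepsilon,\omega}\}$ and rigorously verifying that the pasted measure $Q_\varepsilon$ lies in $\mathcal{P}$, which is precisely where working with $\mathcal{P}=\mathcal{P}_{\max}$ rather than an arbitrary representing set becomes essential.
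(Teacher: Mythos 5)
Your outline is sound, but it is a genuinely different proof from the one in the paper --- in fact it is essentially the original argument of Soner--Touzi--Zhang, which the authors explicitly set out to replace (``Here we give a new proof''). You rely on regular conditional probability distributions, the a.e.\ stability of $\mathcal{P}_{\max}$ under disintegration at time $t$, and a Jankov--von Neumann measurable selection of $\varepsilon$-optimizers followed by pasting. The paper avoids all of this machinery. For the upper bound it proves $E_{Q}[X|\mathcal{F}_{t}]\leq\mathbb{\hat{E}}_{t}[X]$ directly by a contradiction argument (Lemma \ref{le2.16}): if the inequality failed on a set $A$ of positive $Q$-measure, the random variable $I_{A}\xi+I_{A^{c}}\mathbb{\hat{E}}[\xi]$ would violate $E_{Q}\leq\mathbb{\hat{E}}$. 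For the lower bound it exploits that for $X\in L_{ip}(\Omega^{t})$ the conditional expectation $\mathbb{\hat{E}}_{t}[X]=\mathbb{\hat{E}}[X]$ is \emph{deterministic}, so a single non-random optimizer $Q^{\ast}\in\mathcal{P}$ suffices and no measurable selection is needed; the pasted measure is then built by applying Daniell--Stone to the linear functional $E_{P}[E_{Q^{\ast}}[\varphi(x,\eta)]_{x=\xi}]$ (Lemma \ref{le2.17}), and general cylinder functions are reduced to this case by the freezing Lemma \ref{le2.18}. Your route buys generality (it does not lean on the cylindrical structure), while the paper's buys elementarity and, as the authors note, extends to the larger spaces treated in Section 3.

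Two steps in your sketch conceal real work. First, the claim that the shifted r.c.p.d.\ $Q^{\omega}$ of $Q\in\mathcal{P}(t,P)$ lies in $\mathcal{P}_{\max}$ for $P$-a.e.\ $\omega$ is not a formal consequence of maximality alone: one must show $E_{Q^{\omega}}[Y]\leq\mathbb{\hat{E}}[Y]$ simultaneously for all $Y$ in a countable dense subset of $L_{ip}(\Omega^{t})$, and the pointwise inequality for each fixed $Y$ is exactly the content of Lemma \ref{le2.16}, so this step does not come for free. Second, in the density passage from $L_{ip}(\Omega)$ to $L_{G}^{1}(\Omega)$, per-$Q$ convergence $E_{P}[|E_{Q}[X_{n}|\mathcal{F}_{t}]-E_{Q}[X|\mathcal{F}_{t}]|]\to0$, even uniformly in $Q$, does not by itself let you pass the essential supremum over the uncountable family $\mathcal{P}(t,P)$ to the limit in $L^{1}(P)$: you need the identity $E_{P}\bigl[\underset{Q\in\mathcal{P}(t,P)}{ess\sup}^{P}E_{Q}[\zeta|\mathcal{F}_{t}]\bigr]=\sup_{Q\in\mathcal{P}(t,P)}E_{Q}[\zeta]$, which rests on the upward directedness of the family $\{E_{Q}[\cdot|\mathcal{F}_{t}]\}$ obtained by pasting along $\mathcal{F}_{t}$-partitions (Lemmas \ref{le2.23} and \ref{le2.24}). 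Both points are standard and repairable, but they are precisely the lemmas the paper supplies and your proposal omits.
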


Here we give a new proof which still holds for general case. For this we first
consider the representation of the conditional $G$-expectation for $X\in
L_{ip}(\Omega)$.

\begin{lemma}
\label{le2.16} For each fixed $Q\in \mathcal{P}$, we have $E_{Q}[X|\mathcal{F}%
_{t}]\leq \mathbb{\hat{E}}_{t}[X]$, $Q$-a.s., for each $X\in L_{ip}(\Omega)$.
\end{lemma}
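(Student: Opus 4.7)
I would begin by exploiting the explicit cylindrical form of elements of $L_{ip}(\Omega)$. Write $X = \varphi(B_{t_1}-B_{t_0},\dots,B_{t_m}-B_{t_{m-1}})$ with $0 = t_0 < \dots < t_m$, and, by inserting $t$ into the partition if necessary, assume $t = t_i$ for some $i$. Definition~\ref{def2.5}(ii) then identifies $\hat{\mathbb{E}}_t[X]$ with the Lipschitz cylinder function $\tilde{\varphi}(B_{t_1}-B_{t_0},\dots,B_{t_i}-B_{t_{i-1}})$, so in particular $\hat{\mathbb{E}}_t[X] \in L_{ip}(\Omega_t) \subset L_G^1(\Omega_t)$ and is bounded.

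The heart of the argument is the exact identity
\[
\hat{\mathbb{E}}[\psi(X - \hat{\mathbb{E}}_t[X])] = 0 \quad \text{for every nonnegative } \psi \in L_{ip}(\Omega_t).
\]
By the tower property (v), the left side equals $\hat{\mathbb{E}}[\hat{\mathbb{E}}_t[\psi(X - \hat{\mathbb{E}}_t[X])]]$. Property (iv), applied with the bounded nonnegative $\mathcal{F}_t$-measurable element $\psi \in L_G^1(\Omega_t)$, pulls $\psi$ outside the inner conditional expectation, giving $\hat{\mathbb{E}}_t[\psi(X - \hat{\mathbb{E}}_t[X])] = \psi \, \hat{\mathbb{E}}_t[X - \hat{\mathbb{E}}_t[X]]$. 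Property (ii), applied with $-\hat{\mathbb{E}}_t[X] \in L_G^1(\Omega_t)$ and $X \in L_G^1(\Omega)$, then collapses the inner expectation to $-\hat{\mathbb{E}}_t[X] + \hat{\mathbb{E}}_t[X] = 0$, yielding the identity.

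Now $\psi(X - \hat{\mathbb{E}}_t[X]) \in L_{ip}(\Omega)$, so the domination $E_Q \le \hat{\mathbb{E}}$ available from $Q \in \mathcal{P}$ combined with the identity gives
\[
E_Q[\psi X] - E_Q[\psi \hat{\mathbb{E}}_t[X]] \le \hat{\mathbb{E}}[\psi(X - \hat{\mathbb{E}}_t[X])] = 0
\]
for every nonnegative $\psi \in L_{ip}(\Omega_t)$. A routine $L^1(Q)$-density argument extends the resulting inequality $E_Q[\psi X] \le E_Q[\psi \hat{\mathbb{E}}_t[X]]$ to all nonnegative bounded $\mathcal{F}_t$-measurable $\psi$: any $I_A$ with $A \in \mathcal{F}_t$ can be approximated in $L^1(Q)$ by nonnegative bounded Lipschitz cylinder functions of finitely many coordinates $B_{s_j}$, $s_j \le t$ (standard on the Polish path space $\Omega_t$), and boundedness of $X - \hat{\mathbb{E}}_t[X]$ permits passage to the limit. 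The characterization of classical conditional expectation then delivers $E_Q[X|\mathcal{F}_t] \le \hat{\mathbb{E}}_t[X]$ $Q$-a.s.

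The only step that requires attention is the $L^1(Q)$-approximation of $I_A$ by nonnegative Lipschitz cylinders; the rest is a transparent manipulation of the conditional $G$-expectation axioms. Once this lemma is in hand, the matching lower bound needed for Theorem~\ref{the2.15} will come from an essential-supremum argument that combines this inequality over $\mathcal{P}(t,P)$ with the construction of measures in $\mathcal{P}(t,P)$ realizing $\hat{\mathbb{E}}_t[X]$.
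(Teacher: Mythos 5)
Your proof is correct, but it takes a genuinely different route from the paper's. The paper argues in two steps: it first establishes $E_{Q}[\xi|\mathcal{F}_{t}]\leq\mathbb{\hat{E}}[\xi]$ for $\xi\in L_{ip}(\Omega^{t})$ (functions of post-$t$ increments only, for which the conditional expectation is the constant $\mathbb{\hat{E}}[\xi]$) by a contradiction argument built on the partition identity of Theorem \ref{the2.11}, and then reduces a general cylinder function to this case by freezing the pre-$t$ coordinates and exploiting the uniform Lipschitz bound of $\Psi_{1},\Psi_{2}$ in the frozen parameters. You instead prove the single identity $\mathbb{\hat{E}}[\psi(X-\mathbb{\hat{E}}_{t}[X])]=0$ for nonnegative $\psi\in L_{ip}(\Omega_{t})$ directly from properties (ii), (iv), (v) of the conditional $G$-expectation, apply the domination $E_{Q}\leq\mathbb{\hat{E}}$ to the element $\psi(X-\mathbb{\hat{E}}_{t}[X])$ of $L_{ip}(\Omega)$, and finish with a classical monotone-class/$L^{1}(Q)$-density argument to pass from Lipschitz test functions to indicators of $\mathcal{F}_{t}$-sets. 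Your route avoids both the freezing step and Theorem \ref{the2.11}, at the cost of the approximation of $I_{A}$ by nonnegative Lipschitz cylinders, which you correctly flag and which is indeed standard (the class of bounded $\mathcal{F}_{t}$-measurable functions approximable in $L^{1}(Q)$ by uniformly bounded sequences from the multiplicative family $L_{ip}(\Omega_{t})$ is closed under bounded monotone limits, and truncation by $(\cdot\vee 0)\wedge 1$ preserves membership in $L_{ip}(\Omega_{t})$, so nonnegative approximants exist). In essence your identity is the same mechanism the paper accesses through Theorem \ref{the2.11} with $\psi=I_{A}$, so the two arguments are cousins; yours is more self-contained with respect to the listed axioms, while the paper's stays entirely within explicit cylinder manipulations and avoids any measure-theoretic approximation.
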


\begin{proof}
Step 1. We first assert that $E_{Q}[X|\mathcal{F}_{t}]\leq \mathbb{\hat{E}}%
[X]$, $Q$-a.s., for each $X\in L_{ip}(\Omega^{t})$. Otherwise, we can choose a
$\xi \in L_{ip}(\Omega^{t})$ such that $Q(A)>0$, where $A=\{E_{Q}%
[\xi|\mathcal{F}_{t}]>\mathbb{\hat{E}}[\xi]\}$. Thus%
\begin{align*}
E_{Q}[I_{A}\xi+I_{A^{c}}\mathbb{\hat{E}}[\xi]]  &  =E_{Q}[I_{A}E_{Q}%
[\xi|\mathcal{F}_{t}]]+Q(A^{c})\mathbb{\hat{E}}[\xi]\\
&  >E_{Q}[I_{A}\mathbb{\hat{E}}[\xi]]+Q(A^{c})\mathbb{\hat{E}}[\xi]\\
&  =\mathbb{\hat{E}}[\xi].
\end{align*}
On the other hand, $E_{Q}[I_{A}\xi+I_{A^{c}}\mathbb{\hat{E}}[\xi
]]\leq \mathbb{\hat{E}}[I_{A}\xi+I_{A^{c}}\mathbb{\hat{E}}[\xi]]$. By Theorem
\ref{the2.11}, we have%
\[
\mathbb{\hat{E}}[I_{A}\xi+I_{A^{c}}\mathbb{\hat{E}}[\xi]]=\mathbb{\hat{E}%
}[I_{A}\mathbb{\hat{E}}_{t}[\xi]+I_{A^{c}}\mathbb{\hat{E}}[\xi]]=\mathbb{\hat
{E}}[\xi],
\]
which implies a contradiction. Thus $E_{Q}[X|\mathcal{F}_{t}]\leq
\mathbb{\hat{E}}[X]$, $Q$-a.s., for each $X\in L_{ip}(\Omega^{t})$.

Step 2. For each $X\in L_{ip}(\Omega)$, there exist $t_{1}<t_{2}<\cdots<t_{n}$
with $t=t_{k}$ and $\Phi \in C_{b.Lip}(\mathbb{R}^{d\times n})$ such that
$X=\Phi(B_{t_{1}},B_{t_{2}}-B_{t_{1}},\ldots,B_{t_{n}}-B_{t_{n-1}})$. Thus%
\[
E_{Q}[X|\mathcal{F}_{t}]=\Psi_{1}(B_{t_{1}},\cdots,B_{t_{k}}-B_{t_{k-1}%
})\text{ \  \ }Q\text{-a.s.},
\]%
\[
\mathbb{\hat{E}}_{t}[X]=\Psi_{2}(B_{t_{1}},\cdots,B_{t_{k}}-B_{t_{k-1}})
\]
where $\Psi_{1}(x_{1},\cdots,x_{k})=E_{Q}[\Phi(x_{1},\cdots,x_{k},B_{t_{k+1}%
}-B_{t_{k}},\ldots,B_{t_{n}}-B_{t_{n-1}})|\mathcal{F}_{t}]$, $\Psi_{2}%
(x_{1},\cdots,x_{k})=\mathbb{\hat{E}}[\Phi(x_{1},\cdots,x_{k},B_{t_{k+1}%
}-B_{t_{k}},\ldots,B_{t_{n}}-B_{t_{n-1}})]$. By Step 1, we know $\Psi
_{1}(x_{1},\cdots,x_{k})\leq \Psi_{2}(x_{1},\cdots,x_{k})$, $Q$-a.s.. It is
easy to verify that $|\Psi_{i}(x)-\Psi_{i}(x^{\prime})|\leq L_{\Phi
}|x-x^{\prime}|$ for each $x$, $x^{\prime}\in \mathbb{R}^{d\times k}$, $i=1,2$,
where $L_{\Phi}$ is the Lipschitz constant of $\Phi$. Thus $E_{Q}%
[X|\mathcal{F}_{t}]\leq \mathbb{\hat{E}}_{t}[X]$, $Q$-a.s..
\end{proof}

\begin{lemma}
\label{le2.17} For each $P$, $Q\in \mathcal{P}$ and $t\geq0$, set
$E[\varphi(\xi,\eta)]=E_{P}[E_{Q}[\varphi(x,\eta)]_{x=\xi}]$ for each $m$,
$n\in \mathbb{N}$, $\xi \in L_{ip}(\Omega_{t};\mathbb{R}^{m})$, $\eta \in
L_{ip}(\Omega^{t};\mathbb{R}^{n})$ and $\varphi \in C_{b.Lip}(\mathbb{R}%
^{m+n})$. Then there exists a unique $Q^{\ast}\in \mathcal{P}(t,P)$ such that
$E[X]=E_{Q^{\ast}}[X]$ for each $X\in L_{ip}(\Omega)$ and $E_{Q^{\ast}%
}[\varphi(\xi,\eta)|\mathcal{F}_{t}]=E_{Q}[\varphi(x,\eta)]_{x=\xi}$, $P$-a.s..
\end{lemma}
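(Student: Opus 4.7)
The plan is to construct $Q^{*}$ by first defining $E$ as a linear expectation on $L_{ip}(\Omega)$, then extending it to a probability measure on $(\Omega,\mathcal{B}(\Omega))$ via a Daniell--Stone argument, and finally reading off the conditional expectation formula from the construction. Any $X\in L_{ip}(\Omega)$ can be rewritten, after inserting $t$ into its underlying time partition, in the form $X=\varphi(\xi,\eta)$ where $\xi\in L_{ip}(\Omega_{t};\mathbb{R}^{m})$ is a vector of pre-$t$ increments and $\eta\in L_{ip}(\Omega^{t};\mathbb{R}^{n})$ is a vector of post-$t$ increments. I would first verify that $E[X]:=E_{P}[E_{Q}[\varphi(x,\eta)]_{x=\xi}]$ does not depend on the chosen representation (invariance under partition refinement is routine via Lipschitz continuity and Fubini), and check that $E[\cdot]$ is linear, monotone, and satisfies $E[1]=1$.

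For the domination step, Definition~\ref{def2.5} tells us that under $\mathbb{\hat{E}}$ the post-$t$ increment vector $\eta$ is independent of the pre-$t$ increment vector $\xi$, so $\mathbb{\hat{E}}[\varphi(\xi,\eta)]=\mathbb{\hat{E}}[\mathbb{\hat{E}}[\varphi(x,\eta)]_{x=\xi}]$. Combined with $E_{Q}\le \mathbb{\hat{E}}$ pointwise in $x$ and $E_{P}\le \mathbb{\hat{E}}$, this yields $E[X]\le \mathbb{\hat{E}}[X]$, and hence $|E[X]|\le \mathbb{\hat{E}}[|X|]$. To invoke Daniell--Stone I verify continuity from above: if $X_{n}\in L_{ip}(\Omega)$ and $X_{n}\downarrow 0$, then $X_{n}\in L_{G}^{1}(\Omega)$ and Theorem~\ref{the2.10}(c) gives $\mathbb{\hat{E}}[X_{n}]\downarrow 0$, so $E[X_{n}]\downarrow 0$ by domination. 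Since $L_{ip}(\Omega)$ is a Stone vector lattice with $\sigma(L_{ip}(\Omega))=\mathcal{B}(\Omega)$, Daniell--Stone produces a unique probability measure $Q^{*}$ on $(\Omega,\mathcal{B}(\Omega))$ with $E_{Q^{*}}=E$ on $L_{ip}(\Omega)$, and $Q^{*}\in\mathcal{P}_{\max}=\mathcal{P}$ follows from $E_{Q^{*}}\le\mathbb{\hat{E}}$ on $L_{ip}(\Omega)$. Taking $\eta$ trivial shows $E_{Q^{*}}[X]=E_{P}[X]$ for $X\in L_{ip}(\Omega_{t})$, i.e.\ $Q^{*}\in\mathcal{P}(t,P)$.

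For the conditional identity, take $X=\varphi(\xi,\eta)$ and any bounded test $Y=\psi(\xi')$ with $\xi'\in L_{ip}(\Omega_{t};\mathbb{R}^{k})$, $\psi\in C_{b.Lip}$. Applying the defining formula to the combined function $\Psi((x',x),y):=\psi(x')\varphi(x,y)$ gives
\[
E_{Q^{*}}[YX]=E_{P}\bigl[\psi(\xi')\,E_{Q}[\varphi(x,\eta)]_{x=\xi}\bigr]=E_{Q^{*}}\bigl[Y\cdot E_{Q}[\varphi(x,\eta)]_{x=\xi}\bigr],
\]
using $Q^{*}|_{\mathcal{F}_{t}}=P|_{\mathcal{F}_{t}}$ together with the $\mathcal{F}_{t}$-measurability of $E_{Q}[\varphi(x,\eta)]_{x=\xi}$. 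A monotone class argument extends this identity to all bounded $\mathcal{F}_{t}$-measurable $Y$, identifying $E_{Q}[\varphi(x,\eta)]_{x=\xi}$ with $E_{Q^{*}}[\varphi(\xi,\eta)\mid\mathcal{F}_{t}]$, $P$-a.s. Uniqueness of $Q^{*}$ in $\mathcal{P}(t,P)$ is automatic: any two candidates with the prescribed values on $L_{ip}(\Omega)$ agree on this lattice and hence on the generated $\sigma$-algebra $\mathcal{B}(\Omega)$.

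The main obstacle is the Daniell--Stone extension: one must verify the continuity from above cleanly (which reduces to Theorem~\ref{the2.10}(c) once one observes $L_{ip}(\Omega)\subset L_{G}^{1}(\Omega)$) and confirm that $\sigma(L_{ip}(\Omega))=\mathcal{B}(\Omega)$, so that the produced measure lives on the correct $\sigma$-algebra. The well-definedness of $E$ under partition refinements is conceptually straightforward but requires careful bookkeeping to make sure the inner $E_{Q}$ always integrates only the post-$t$ variables.
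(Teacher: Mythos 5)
Your proposal is correct and follows essentially the same route as the paper: define $E[\cdot]$ as a linear expectation dominated by $\mathbb{\hat{E}}$ on $L_{ip}(\Omega)$, invoke Theorem~\ref{the2.10} together with the Daniell--Stone theorem to produce $Q^{\ast}$, and identify the conditional expectation by testing against $\xi^{\prime}\in L_{ip}(\Omega_{t})$. The extra details you supply (well-definedness under partition refinement, continuity from above, the monotone class step) are exactly the points the paper leaves as "easy to check."
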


\begin{proof}
It is easy to check that $E[\cdot]$ is a linear expectation on $L_{ip}%
(\Omega)$ and $E[X]\leq \mathbb{\hat{E}}[X]$ for each $X\in L_{ip}(\Omega)$. By
Theorem \ref{the2.10} and the Daniell-Stone theorem, there exists a unique
$Q^{\ast}\in \mathcal{M}$ such that $E[X]=E_{Q^{\ast}}[X]$ for each $X\in
L_{ip}(\Omega)$. Obviously, $Q^{\ast}\in \mathcal{P}(t,P)$. For each
$\xi^{\prime}\in L_{ip}(\Omega_{t})$, we have $E_{Q^{\ast}}[\xi^{\prime
}\varphi(\xi,\eta)]=E[\xi^{\prime}\varphi(\xi,\eta)]=E_{P}[\xi^{\prime}%
E_{Q}[\varphi(x,\eta)]_{x=\xi}]=E_{Q^{\ast}}[\xi^{\prime}E_{Q}[\varphi
(x,\eta)]_{x=\xi}]$, which implies $E_{Q^{\ast}}[\varphi(\xi,\eta
)|\mathcal{F}_{t}]=E_{Q}[\varphi(x,\eta)]_{x=\xi}$, $P$-a.s..
\end{proof}

\begin{lemma}
\label{le2.18} For each fixed $m$, $n\geq1$, $\varphi \in C_{b.Lip}%
(\mathbb{R}^{m+n})$ and $\eta \in L_{ip}(\Omega;\mathbb{R}^{n})$, we set
$\Lambda(\xi)=\underset{Q\in \mathcal{P}(t,P)}{ess\sup}^{P}E_{Q}[\varphi
(\xi,\eta)|\mathcal{F}_{t}]$ for each $\xi \in \mathbb{L}^{1}(\Omega
_{t};\mathbb{R}^{m})$. Then $\Lambda(\xi)=\Lambda(x)|_{x=\xi}$, $P$-a.s..
\end{lemma}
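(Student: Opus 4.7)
Write $g(x):=\Lambda(x)$ for constant $x\in\mathbb{R}^{m}$; the goal is $\Lambda(\xi)=g(\xi)$ $P$-a.s. The strategy is to prove both inequalities after a preliminary reduction.

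\emph{Reduction and identification of $g$.} First I would reduce to $\eta\in L_{ip}(\Omega^{t};\mathbb{R}^{n})$: any $\eta\in L_{ip}(\Omega)$ is Lipschitz in finitely many increments of $B$, and those formed before time $t$ are $\mathcal{F}_{t}$-measurable and can be absorbed into an enlarged $\mathcal{F}_{t}$-measurable vector, replacing $(\xi,\eta,\varphi)$ by $(\tilde{\xi},\tilde{\eta},\tilde{\varphi})$ with $\tilde{\eta}\in L_{ip}(\Omega^{t};\mathbb{R}^{\tilde{n}})$ and $\tilde{\varphi}\in C_{b.Lip}$. Next I would check $g(x)=\mathbb{\hat{E}}[\varphi(x,\eta)]$ for constant $x$: since $\varphi(x,\eta)\in L_{ip}(\Omega^{t})$, Definition~\ref{def2.5} gives $\mathbb{\hat{E}}_{t}[\varphi(x,\eta)]=\mathbb{\hat{E}}[\varphi(x,\eta)]$, so Lemma~\ref{le2.16} yields $g(x)\leq\mathbb{\hat{E}}[\varphi(x,\eta)]$, while the reverse inequality follows from Lemma~\ref{le2.17} applied with $\xi=x$ for each $Q\in\mathcal{P}$ together with $\mathbb{\hat{E}}[\cdot]=\sup_{Q\in\mathcal{P}}E_{Q}[\cdot]$.

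\emph{Upper bound $\Lambda(\xi)\leq g(\xi)$.} For any $Q'\in\mathcal{P}(t,P)$, Lemma~\ref{le2.16} gives $E_{Q'}[\varphi(\xi,\eta)|\mathcal{F}_{t}]\leq\mathbb{\hat{E}}_{t}[\varphi(\xi,\eta)]$ $P$-a.s. I would identify $\mathbb{\hat{E}}_{t}[\varphi(\xi,\eta)]=g(\xi)$ first for a simple $\xi=\sum_{i=1}^{N}x_{i}I_{A_{i}}$ with $A_{i}\in\mathcal{F}_{t}$ disjoint, by using properties (ii) and (iv) of $\mathbb{\hat{E}}_{t}$ to pull each $I_{A_{i}}$ outside and reduce to $\sum_{i}I_{A_{i}}g(x_{i})=g(\xi)$. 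For general $\xi\in\mathbb{L}^{1}(\Omega_{t};\mathbb{R}^{m})$ I would approximate $P$-a.s.\ by simple $\mathcal{F}_{t}$-measurable $\xi_{n}$ and pass to the limit using the Lipschitz bound $|\varphi(\xi,\eta)-\varphi(\xi_{n},\eta)|\leq L|\xi-\xi_{n}|$, where $L$ is the Lipschitz constant of $\varphi$ in its first argument.

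\emph{Lower bound $\Lambda(\xi)\geq g(\xi)$.} I would treat simple $\xi=\sum_{i=1}^{N}x_{i}I_{A_{i}}$ first. For each $i$ and each $\varepsilon>0$ select $Q_{i}^{\varepsilon}\in\mathcal{P}$ with $E_{Q_{i}^{\varepsilon}}[\varphi(x_{i},\eta)]\geq g(x_{i})-\varepsilon$, and apply Lemma~\ref{le2.17} to obtain $(Q_{i}^{\varepsilon})^{\ast}\in\mathcal{P}(t,P)$ with $E_{(Q_{i}^{\varepsilon})^{\ast}}[\varphi(\xi,\eta)|\mathcal{F}_{t}]=E_{Q_{i}^{\varepsilon}}[\varphi(x,\eta)]_{x=\xi}$ $P$-a.s. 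On $A_{i}$ the right-hand side equals $E_{Q_{i}^{\varepsilon}}[\varphi(x_{i},\eta)]\geq g(x_{i})-\varepsilon=g(\xi)-\varepsilon$, so $\Lambda(\xi)\geq g(\xi)-\varepsilon$ $P$-a.s.\ on $A_{i}$. Combining the finitely many $A_{i}$ and letting $\varepsilon\downarrow 0$ gives $\Lambda(\xi)\geq g(\xi)$ $P$-a.s. I would then extend to general $\xi$ by $P$-a.s.\ approximation, using the Lipschitz estimates $|\Lambda(\xi)-\Lambda(\xi_{n})|\leq L|\xi-\xi_{n}|$ (obtained by taking the ess-sup of the pointwise bound $|E_{Q}[\varphi(\xi,\eta)|\mathcal{F}_{t}]-E_{Q}[\varphi(\xi_{n},\eta)|\mathcal{F}_{t}]|\leq L|\xi-\xi_{n}|$) and the analogous $|g(\xi)-g(\xi_{n})|\leq L|\xi-\xi_{n}|$. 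The main obstacle is this lower bound on the simple level; the key point is that Lemma~\ref{le2.17} allows a \emph{different} measure $(Q_{i}^{\varepsilon})^{\ast}$ to be used on each cell $A_{i}$, each individually lying in the family defining the essential supremum $\Lambda(\xi)$, so no global pasting of measures across the partition is required.
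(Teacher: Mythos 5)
Your proposal is correct in substance and rests on the same two pillars as the paper's proof --- the $P$-a.s.\ Lipschitz estimates $|\Lambda(\xi_{1})-\Lambda(\xi_{2})|\leq L|\xi_{1}-\xi_{2}|$ and $|\Lambda(x)|_{x=\xi_{1}}-\Lambda(x)|_{x=\xi_{2}}|\leq L|\xi_{1}-\xi_{2}|$, together with reduction to simple $\mathcal{F}_{t}$-measurable $\xi=\sum_{i}x_{i}I_{A_{i}}$ --- but it handles the simple case by a genuinely different route. The paper never identifies $\Lambda(x)$ with a deterministic quantity: it keeps each $\Lambda(x_{i})=\underset{Q\in \mathcal{P}(t,P)}{ess\sup}^{P}E_{Q}[\varphi(x_{i},\eta)|\mathcal{F}_{t}]$ as an $\mathcal{F}_{t}$-measurable random variable and observes (the Remark after the lemma) that multiplying by $I_{A_{j}}$ localizes both sides, so the essential supremum commutes with the finite partition sum; no selection of measures and no restriction on $\eta$ beyond $\eta\in L_{ip}(\Omega)$ is needed. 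You instead first reduce to $\eta\in L_{ip}(\Omega^{t};\mathbb{R}^{n})$ so that $\Lambda(x)=\mathbb{\hat{E}}[\varphi(x,\eta)]$ becomes deterministic (this is in effect Step 1 of Lemma \ref{le2.19}, correctly derived from Lemmas \ref{le2.16} and \ref{le2.17} with no circularity), and then prove two inequalities, the lower one by choosing a near-optimal $Q_{i}^{\varepsilon}$ per cell and lifting it into $\mathcal{P}(t,P)$ via Lemma \ref{le2.17}; your closing observation that no pasting across cells is required is exactly the right point. What this buys is an explicit formula for $\Lambda(x)$; the price is the reduction step, which is more delicate than stated: to recover $\Lambda(x)|_{x=\xi}$ from $\tilde{\Lambda}(\tilde{x})|_{\tilde{x}=\tilde{\xi}}$ you must apply the reduced-form result a second time, to the pair $(x,\zeta)$ with $x$ constant and $\zeta$ the absorbed pre-$t$ increments, and then patch over $x$ by the Lipschitz continuity; this should be made explicit.

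One step needs rewriting. In the upper bound you invoke Lemma \ref{le2.16} and properties (ii), (iv) of $\mathbb{\hat{E}}_{t}$ on $\varphi(\xi,\eta)$ for simple (hence merely Borel measurable) $\xi$; but $I_{A_{i}}$ for a general $A_{i}\in \mathcal{F}_{t}$ need not be quasi-continuous, so $\varphi(\xi,\eta)\notin L_{G}^{1}(\Omega)$ and $\mathbb{\hat{E}}_{t}[\varphi(\xi,\eta)]$ is not even defined at this stage of the paper. The detour through $\mathbb{\hat{E}}_{t}$ is unnecessary: for simple $\xi$ one has directly $E_{Q^{\prime}}[\varphi(\xi,\eta)|\mathcal{F}_{t}]=\sum_{i}I_{A_{i}}E_{Q^{\prime}}[\varphi(x_{i},\eta)|\mathcal{F}_{t}]\leq \sum_{i}I_{A_{i}}\Lambda(x_{i})=\Lambda(x)|_{x=\xi}$ $P$-a.s., since $Q^{\prime}$ belongs to the family defining each $\Lambda(x_{i})$. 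Similarly, Lemma \ref{le2.17} is stated for $\xi\in L_{ip}(\Omega_{t};\mathbb{R}^{m})$, so for simple $\xi$ you should apply it cell by cell with the constant $x_{i}$ and use locality of the classical conditional expectation on $A_{i}$. With these repairs the argument goes through.
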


\begin{proof}
For each $\xi_{1}$, $\xi_{2}\in \mathbb{L}^{1}(\Omega_{t};\mathbb{R}^{m})$, we
have%
\[
|\Lambda(\xi_{1})-\Lambda(\xi_{2})|\leq \underset{Q\in \mathcal{P}(t,P)}%
{ess\sup}^{P}E_{Q}[|\varphi(\xi_{1},\eta)-\varphi(\xi_{2},\eta)||\mathcal{F}%
_{t}]\leq L|\xi_{1}-\xi_{2}|,\text{ }P\text{-a.s.},
\]
where $L$ is the Lipschitz constant of $\varphi$. From this we can deduce that
$|\Lambda(x)|_{x=\xi_{1}}-\Lambda(x)|_{x=\xi_{2}}|\leq L|\xi_{1}-\xi_{2}|,$
$P$-a.s.. Thus we only need to consider bounded $\xi \in \mathbb{L}^{1}%
(\Omega_{t};\mathbb{R}^{m})$. For each $\varepsilon>0$, we can choose a simple
function $\eta^{\varepsilon}=\sum_{i=1}^{n}x_{i}I_{A_{i}}$, where
$(A_{i})_{i=1}^{n}$ is an $\mathcal{F}_{t}$-partition of $\Omega$ and
$x_{i}\in \mathbb{R}^{m}$, such that $|\eta^{\varepsilon}-\xi|\leq \varepsilon$.
Then we get $|\Lambda(\xi)-\Lambda(\eta^{\varepsilon})|\leq L\varepsilon,$
$P$-a.s.$,$ and $|\Lambda(x)|_{x=\xi}-\Lambda(x)|_{x=\eta^{\varepsilon}}|\leq
L\varepsilon$, $P$-a.s.. On the other hand, we have%
\begin{align*}
\Lambda(\eta^{\varepsilon})  &  =\underset{Q\in \mathcal{P}(t,P)}{ess\sup}%
^{P}\sum_{i=1}^{n}I_{A_{i}}E_{Q}[\varphi(x_{i},\eta)|\mathcal{F}_{t}]\\
&  =\sum_{i=1}^{n}I_{A_{i}}\underset{Q\in \mathcal{P}(t,P)}{ess\sup}^{P}%
E_{Q}[\varphi(x_{i},\eta)|\mathcal{F}_{t}]=\Lambda(x)|_{x=\eta^{\varepsilon}%
},\text{ }P\text{-a.s..}%
\end{align*}
Thus we get $|\Lambda(\xi)-\Lambda(x)|_{x=\xi}|\leq2L\varepsilon,$ $P$-a.s..
Letting $\varepsilon \rightarrow0$, we obtain $\Lambda(\xi)=\Lambda(x)|_{x=\xi
}$, $P$-a.s..
\end{proof}

\begin{remark}
It is important to note that%
\[
I_{A_{j}}\underset{Q\in \mathcal{P}(t,P)}{ess\sup}^{P}\sum_{i=1}^{n}I_{A_{i}%
}E_{Q}[\varphi(x_{i},\eta)|\mathcal{F}_{t}]=\underset{Q\in \mathcal{P}%
(t,P)}{ess\sup}^{P}I_{A_{j}}E_{Q}[\varphi(x_{j},\eta)|\mathcal{F}_{t}],
\]
which implies%
\[
\underset{Q\in \mathcal{P}(t,P)}{ess\sup}^{P}\sum_{i=1}^{n}I_{A_{i}}%
E_{Q}[\varphi(x_{i},\eta)|\mathcal{F}_{t}]=\sum_{i=1}^{n}I_{A_{i}}%
\underset{Q\in \mathcal{P}(t,P)}{ess\sup}^{P}E_{Q}[\varphi(x_{i},\eta
)|\mathcal{F}_{t}].
\]

\end{remark}

\begin{lemma}
\label{le2.19}For each $X\in L_{ip}(\Omega)$, we have, for each $P\in
\mathcal{P}$,%
\[
\mathbb{\hat{E}}_{t}[X]=\underset{Q\in \mathcal{P}(t,P)}{ess\sup}^{P}%
E_{Q}[X|\mathcal{F}_{t}]\text{ \  \ }P\text{-a.s.}.
\]

\end{lemma}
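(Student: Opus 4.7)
The plan is to prove the two inequalities separately and then combine them.

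For the upper bound $\underset{Q\in \mathcal{P}(t,P)}{ess\sup}^{P} E_{Q}[X|\mathcal{F}_{t}]\leq \mathbb{\hat{E}}_{t}[X]$ $P$-a.s., I would invoke Lemma \ref{le2.16} directly. For each $Q\in \mathcal{P}(t,P)\subset \mathcal{P}$, Lemma \ref{le2.16} yields $E_{Q}[X|\mathcal{F}_{t}]\leq \mathbb{\hat{E}}_{t}[X]$ $Q$-a.s. Since both sides admit $\mathcal{F}_{t}$-measurable versions and $Q$ coincides with $P$ on $\mathcal{F}_{t}$ by the definition of $\mathcal{P}(t,P)$, the inequality also holds $P$-a.s., so the essential supremum inherits the bound.

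For the lower bound I would first decompose $X$ using the canonical structure of $L_{ip}(\Omega)$. Write $X=\Phi(\xi,\eta)$ with $\Phi\in C_{b.Lip}$, where $\xi\in L_{ip}(\Omega_{t};\mathbb{R}^{dk})$ collects the increments up to $t$ and $\eta\in L_{ip}(\Omega^{t};\mathbb{R}^{d(n-k)})$ collects the increments after $t$ (inserting $t$ into the time grid if necessary). By Definition \ref{def2.5}(ii), $\mathbb{\hat{E}}_{t}[X]=\tilde{\Phi}(\xi)$ where $\tilde{\Phi}(y):=\mathbb{\hat{E}}[\Phi(y,\eta)]$. Set
\[
\Lambda(y):=\underset{Q^{\ast}\in \mathcal{P}(t,P)}{ess\sup}^{P} E_{Q^{\ast}}[\Phi(y,\eta)|\mathcal{F}_{t}],\qquad y\in\mathbb{R}^{dk}.
\]
The key step is to show $\Lambda(y)\geq\tilde{\Phi}(y)$ $P$-a.s. for every deterministic $y$. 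For any $Q\in\mathcal{P}$, Lemma \ref{le2.17} applied with the deterministic (constant) vector $y$ in place of $\xi$ produces some $Q^{\ast}\in\mathcal{P}(t,P)$ satisfying $E_{Q^{\ast}}[\Phi(y,\eta)|\mathcal{F}_{t}]=E_{Q}[\Phi(y,\eta)]$ $P$-a.s., a constant. Hence $\Lambda(y)\geq E_{Q}[\Phi(y,\eta)]$ $P$-a.s. Because $\Phi(y,\eta)$ is bounded continuous and $\mathcal{P}$ is weakly compact, the supremum $\tilde{\Phi}(y)=\sup_{Q\in\mathcal{P}} E_{Q}[\Phi(y,\eta)]$ is attained by some $Q_{y}\in\mathcal{P}$; choosing $Q=Q_{y}$ yields $\Lambda(y)\geq\tilde{\Phi}(y)$ $P$-a.s.

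Finally, apply Lemma \ref{le2.18} (with $\varphi=\Phi$) to obtain $\Lambda(\xi)=\Lambda(y)|_{y=\xi}$ $P$-a.s. Combining with the previous pointwise bound gives
\[
\underset{Q\in\mathcal{P}(t,P)}{ess\sup}^{P} E_{Q}[X|\mathcal{F}_{t}]=\Lambda(\xi)=\Lambda(y)|_{y=\xi}\geq\tilde{\Phi}(y)|_{y=\xi}=\mathbb{\hat{E}}_{t}[X]\quad P\text{-a.s.},
\]
which, together with the upper bound, proves the equality. The main obstacle is the interplay between an essential supremum (defined only $P$-a.s.) and a pointwise-in-$y$ lower bound; it is resolved by the three-step mechanism above — Lemma \ref{le2.17} transports each $Q\in\mathcal{P}$ into $\mathcal{P}(t,P)$ preserving the value on $\Phi(y,\eta)$ for deterministic $y$, weak compactness attains the sup defining $\tilde{\Phi}(y)$ with a single maximizer $Q_{y}$, and Lemma \ref{le2.18} finally licenses substituting the random $\xi$ back into the inequality.
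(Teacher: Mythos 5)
Your proof is correct and follows essentially the same route as the paper's: the upper bound via Lemma \ref{le2.16}, and the lower bound by freezing the past increments, using Lemma \ref{le2.17} to transport a maximizing $Q^{\ast}\in\mathcal{P}$ into $\mathcal{P}(t,P)$ for each deterministic $y$, and then invoking Lemma \ref{le2.18} to substitute $\xi$ back in. The only cosmetic difference is that the paper first isolates the case $X\in L_{ip}(\Omega^{t})$ as a separate Step 1 and picks the maximizer before transporting, whereas you transport every $Q$ and then maximize; these are equivalent.
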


\begin{proof}
By Lemma \ref{le2.16}, we know that $\underset{Q\in \mathcal{P}(t,P)}{ess\sup
}^{P}E_{Q}[X|\mathcal{F}_{t}]\leq \mathbb{\hat{E}}_{t}[X]$, $P$-a.s., for each
$X\in L_{ip}(\Omega)$. We only need to prove that $\underset{Q\in
\mathcal{P}(t,P)}{ess\sup}^{P}E_{Q}[X|\mathcal{F}_{t}]\geq \mathbb{\hat{E}}%
_{t}[X]$, $P$-a.s..

Step 1. For each $X\in L_{ip}(\Omega^{t})$, we can choose a $Q^{\ast}%
\in \mathcal{P}$ such that $E_{Q^{\ast}}[X]=\mathbb{\hat{E}}[X]=\mathbb{\hat
{E}}_{t}[X]$. By Lemma \ref{le2.17}, there exists a $Q\in \mathcal{P}(t,P)$
such that $E_{Q}[X|\mathcal{F}_{t}]=E_{Q^{\ast}}[X]$, $P$-a.s.. Thus
$\underset{Q\in \mathcal{P}(t,P)}{ess\sup}^{P}E_{Q}[X|\mathcal{F}_{t}%
]\geq \mathbb{\hat{E}}_{t}[X]$, $P$-a.s., for each $X\in L_{ip}(\Omega^{t})$.

Step 2. For each $X\in L_{ip}(\Omega)$, there exist $t_{1}<t_{2}<\cdots<t_{n}$
with $t=t_{k}$ and $\Phi \in C_{b.Lip}(\mathbb{R}^{d\times n})$ such that
$X=\Phi(B_{t_{1}},B_{t_{2}}-B_{t_{1}},\ldots,B_{t_{n}}-B_{t_{n-1}})$. By Lemma
\ref{le2.18}, we get%
\[
\underset{Q\in \mathcal{P}(t,P)}{ess\sup}^{P}E_{Q}[X|\mathcal{F}_{t}%
]=\Psi(B_{t_{1}},\cdots,B_{t_{k}}-B_{t_{k-1}}),\text{ }P\text{-a.s.},
\]
where $\Psi(x_{1},\cdots,x_{k})=\underset{Q\in \mathcal{P}(t,P)}{ess\sup}%
^{P}E_{Q}[\Phi(x_{1},\cdots,x_{k},B_{t_{k+1}}-B_{t_{k}},\ldots,B_{t_{n}%
}-B_{t_{n-1}})|\mathcal{F}_{t}]$. By Step 1, we have $\Psi(x_{1},\cdots
,x_{k})\geq \mathbb{\hat{E}}[\Phi(x_{1},\cdots,x_{k},B_{t_{k+1}}-B_{t_{k}%
},\ldots,B_{t_{n}}-B_{t_{n-1}})]$, $P$-a.s.. Thus $\underset{Q\in
\mathcal{P}(t,P)}{ess\sup}^{P}E_{Q}[X|\mathcal{F}_{t}]\geq \mathbb{\hat{E}}%
_{t}[X]$, $P$-a.s., for each $X\in L_{ip}(\Omega)$.
\end{proof}

We now consider the representation of the conditional $G$-expectation for
$X\in L_{G}^{1}(\Omega)$.

\begin{lemma}
\label{le2.23} Let $A_{1},\ldots,A_{n}$ be an $\mathcal{F}_{t}$-partition of
$\Omega$. Then for each given $\{Q_{i}\}_{i=1}^n\subset\mathcal{P}(t,P)$, there
exists a unique $Q\in \mathcal{P}(t,P)$ such that $E_{Q}[X|\mathcal{F}%
_{t}]=\sum_{i=1}^{n}E_{Q_{i}}[X|\mathcal{F}_{t}]I_{A_{i}}$ for each $X\in
L_{ip}(\Omega)$.
\end{lemma}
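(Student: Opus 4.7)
The plan is to construct $Q$ by \emph{pasting} $Q_1,\ldots,Q_n$ along the partition: define a linear functional on $L_{ip}(\Omega)$ by
\[
E[X]:=\sum_{i=1}^{n}E_{P}\!\bigl[I_{A_{i}}E_{Q_{i}}[X|\mathcal{F}_{t}]\bigr],\qquad X\in L_{ip}(\Omega),
\]
and then apply the Daniell--Stone argument of Lemma \ref{le2.17} to realize $E$ as integration against a probability measure $Q$. Since each $Q_{i}\in\mathcal{P}(t,P)$, we have $E_{P}[I_{A_{i}}E_{Q_{i}}[X|\mathcal{F}_{t}]]=E_{Q_{i}}[I_{A_{i}}X]$, so the functional is well defined and linear. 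To check domination by $\mathbb{\hat{E}}$, I would use Lemma \ref{le2.19} to get $E_{Q_{i}}[X|\mathcal{F}_{t}]\leq\mathbb{\hat{E}}_{t}[X]$ $P$-a.s., sum against the partition to obtain $\sum_{i}I_{A_{i}}E_{Q_{i}}[X|\mathcal{F}_{t}]\leq\mathbb{\hat{E}}_{t}[X]$ $P$-a.s., and then take $E_{P}$ using the tower property $\mathbb{\hat{E}}[\mathbb{\hat{E}}_{t}[X]]=\mathbb{\hat{E}}[X]$. This yields $E[X]\leq\mathbb{\hat{E}}[X]$ on $L_{ip}(\Omega)$.

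By Theorem \ref{the2.10} and the Daniell--Stone theorem (exactly as in Lemma \ref{le2.17}), there is a unique $Q\in\mathcal{M}$ with $E_{Q}[X]=E[X]$ for all $X\in L_{ip}(\Omega)$; the domination $E\leq\mathbb{\hat{E}}$ places $Q$ in $\mathcal{P}_{\max}$, which by Proposition \ref{pro2.13} / Remark \ref{rem2.14} we may identify with $\mathcal{P}$. To check $Q\in\mathcal{P}(t,P)$, I would take $X\in L_{ip}(\Omega_{t})$: then $E_{Q_{i}}[X|\mathcal{F}_{t}]=X$ $P$-a.s., so $E_{Q}[X]=\sum_{i}E_{P}[I_{A_{i}}X]=E_{P}[X]$.

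For the conditional-expectation identity, set $Y:=\sum_{i=1}^{n}I_{A_{i}}E_{Q_{i}}[X|\mathcal{F}_{t}]$, which is $\mathcal{F}_{t}$-measurable. For any $\xi\in L_{ip}(\Omega_{t})$, the product $\xi X$ lies in $L_{ip}(\Omega)$, and
\[
E_{Q}[\xi X]=E[\xi X]=\sum_{i}E_{P}\!\bigl[I_{A_{i}}\xi E_{Q_{i}}[X|\mathcal{F}_{t}]\bigr]=E_{P}[\xi Y]=E_{Q}[\xi Y],
\]
where the last equality uses $Q=P$ on $\mathcal{F}_{t}$ (from the previous paragraph). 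A monotone class / MCT argument upgrades this identity from $\xi\in L_{ip}(\Omega_{t})$ to $\xi=I_{B}$ for every $B\in\mathcal{F}_{t}$, which is exactly the defining property $E_{Q}[X|\mathcal{F}_{t}]=Y$ $Q$-a.s. Uniqueness of $Q$ is immediate from uniqueness in Daniell--Stone.

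I expect the only real subtlety to be the last step: extending the integration-by-parts identity from cylinder test functions $\xi\in L_{ip}(\Omega_{t})$ to indicators of arbitrary sets in $\mathcal{F}_{t}$, since $L_{ip}(\Omega_{t})$ is only a generating class for $\mathcal{F}_{t}$. This is routine once the equality of $Q$ and $P$ on $\mathcal{F}_{t}$ has been established, because both sides of the identity are finite signed measures in $\xi$ that agree on a multiplicative $\pi$-system generating $\mathcal{F}_{t}$.
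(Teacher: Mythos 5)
Your proposal is correct and follows essentially the same route as the paper: the paper likewise defines the pasted linear functional $E[X]=E_{P}[\sum_{i=1}^{n}E_{Q_{i}}[X|\mathcal{F}_{t}]I_{A_{i}}]$, dominates it by $\mathbb{\hat{E}}$ via the inequality $E_{Q_{i}}[X|\mathcal{F}_{t}]\leq\mathbb{\hat{E}}_{t}[X]$ (Lemma \ref{le2.16}, of which your Lemma \ref{le2.19} is the two-sided refinement), and invokes Theorem \ref{the2.10} with the Daniell--Stone theorem, leaving the membership $Q\in\mathcal{P}(t,P)$ and the conditional identity as ``easy to verify''---details you fill in correctly. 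The only cosmetic slip is attributing $\mathcal{P}_{\max}=\mathcal{P}$ to Proposition \ref{pro2.13}/Remark \ref{rem2.14} (which concern capacities); the paper simply assumes $\mathcal{P}=\mathcal{P}_{\max}$ throughout that subsection.
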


\begin{proof}
For each $X\in L_{ip}(\Omega)$, we set $E[X]=E_{P}[\sum_{i=1}^{n}E_{Q_{i}%
}[X|\mathcal{F}_{t}]I_{A_{i}}]$. It is easy to check that $E[\cdot]$ is a
linear expectation on $L_{ip}(\Omega)$. By Lemma \ref{le2.16}, we obtain that
$E[X]\leq \mathbb{\hat{E}}[X]$ for each $X\in L_{ip}(\Omega)$. By Theorem
\ref{the2.10} and the Daniell-Stone theorem, there exists a unique
$Q\in \mathcal{P}$ such that $E[X]=E_{Q}[X]$ for each $X\in L_{ip}(\Omega)$. It
is easy to verify that $Q\in \mathcal{P}(t,P)$ and $E_{Q}[X|\mathcal{F}%
_{t}]=\sum_{i=1}^{n}E_{Q_{i}}[X|\mathcal{F}_{t}]I_{A_{i}}$ for each $X\in
L_{ip}(\Omega)$.
\end{proof}

\begin{lemma}
\label{le2.24} For each fixed $t\geq0$ and $P\in \mathcal{P}$, we have

\begin{description}
\item[(1)] $\mathcal{P}(t,P)$ is weakly compact;

\item[(2)] For each $\xi \in \mathbb{L}^{1}(\Omega)$, there exists a sequence
$Q_{n}\in \mathcal{P}(t,P)$ such that
\[
E_{Q_{n}}[\xi|\mathcal{F}_{t}]\uparrow \underset{Q\in \mathcal{P}(t,P)}{ess\sup
}^{P}E_{Q}[\xi|\mathcal{F}_{t}],P-a.s.\text{;}%
\]

\item[(3)] For each $\xi \in \mathbb{L}^{1}(\Omega)$, $E_{P}[\underset
{Q\in \mathcal{P}(t,P)}{ess\sup}^{P}E_{Q}[\xi|\mathcal{F}_{t}]]=\underset
{Q\in \mathcal{P}(t,P)}{\sup}E_{Q}[\xi]$.
\end{description}
\end{lemma}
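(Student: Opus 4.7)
\textbf{Plan for the proof of Lemma \ref{le2.24}.}

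For part (1), my plan is to embed $\mathcal{P}(t,P)$ in the weakly compact set $\mathcal{P}$ and show it is weakly closed. If $Q_n \in \mathcal{P}(t,P)$ converges weakly to some $Q \in \mathcal{P}$, then since every $X \in L_{ip}(\Omega_{t})$ is bounded and continuous on $\Omega$, we have $E_{Q_n}[X] \to E_Q[X]$; but $E_{Q_n}[X] = E_P[X]$ for every $n$, so $E_Q[X] = E_P[X]$ and hence $Q \in \mathcal{P}(t,P)$. A weakly closed subset of a weakly compact set is weakly compact.

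For part (2), the core step is to show that the family $\mathcal{A}:=\{E_Q[\xi|\mathcal{F}_t] : Q \in \mathcal{P}(t,P)\}$ is upward directed under $P$. Given $Q_1, Q_2 \in \mathcal{P}(t,P)$, I set $A := \{E_{Q_1}[\xi|\mathcal{F}_t] \geq E_{Q_2}[\xi|\mathcal{F}_t]\} \in \mathcal{F}_t$, and apply Lemma \ref{le2.23} with the partition $\{A, A^c\}$ to obtain $Q \in \mathcal{P}(t,P)$ such that
\[
E_Q[X|\mathcal{F}_t] = I_A E_{Q_1}[X|\mathcal{F}_t] + I_{A^c} E_{Q_2}[X|\mathcal{F}_t] \quad P\text{-a.s.,}
\]
for every $X \in L_{ip}(\Omega)$. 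I then extend this identity to arbitrary $\xi \in \mathbb{L}^1(\Omega)$ by a monotone-class argument: the class of bounded $\mathcal{F}$-measurable functions for which the identity holds is a vector space containing $L_{ip}(\Omega)$ and closed under bounded monotone convergence (via conditional monotone convergence under each of $Q, Q_1, Q_2$, all of which coincide with $P$ on $\mathcal{F}_t$). Truncating $\xi^\pm$ and passing to the monotone limit extends the identity to $\mathbb{L}^1(\Omega)$. It follows that $E_Q[\xi|\mathcal{F}_t] = \max(E_{Q_1}[\xi|\mathcal{F}_t], E_{Q_2}[\xi|\mathcal{F}_t])$, $P$-a.s., so $\mathcal{A}$ is upward directed. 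The existence of the desired sequence $Q_n$ then follows from the classical measure-theoretic fact that the essential supremum of an upward-directed family is attained as the monotone limit of a sequence from the family.

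For part (3), I combine (2) with monotone convergence under $P$. Let $Z_n := E_{Q_n}[\xi|\mathcal{F}_t]$ and let $Z$ be the essential supremum. Since $Q_n|_{\mathcal{F}_t}=P|_{\mathcal{F}_t}$, one has $E_P[|Z_n|] \leq E_{Q_n}[|\xi|] \leq \mathbb{\hat{E}}[|\xi|] < \infty$; in particular $Z_1 \in L^1(P)$, so the increasing sequence $Z_n \geq Z_1$ permits monotone convergence and yields
\[
E_P[Z] = \lim_n E_P[Z_n] = \lim_n E_{Q_n}[\xi] \leq \sup_{Q \in \mathcal{P}(t,P)} E_Q[\xi].
\]
The reverse inequality is immediate: for any $Q \in \mathcal{P}(t,P)$, $E_Q[\xi] = E_P[E_Q[\xi|\mathcal{F}_t]] \leq E_P[Z]$, and taking the supremum over $Q$ closes the loop.

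The main obstacle I anticipate is the monotone-class extension in part (2): one must transfer the identity from $L_{ip}(\Omega)$ to general $\xi \in \mathbb{L}^1(\Omega)$ while keeping the ``$P$-a.s.''\ qualifier compatible across the three measures $Q, Q_1, Q_2$. The compatibility $Q|_{\mathcal{F}_t}=Q_i|_{\mathcal{F}_t}=P|_{\mathcal{F}_t}$, built into the definition of $\mathcal{P}(t,P)$ and delivered by Lemma \ref{le2.23}, is precisely what lets the argument go through cleanly.
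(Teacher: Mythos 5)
Your proposal is correct and follows essentially the same route as the paper: weak closedness of $\mathcal{P}(t,P)$ inside the weakly compact $\mathcal{P}$ for (1), upward directedness of $\{E_{Q}[\xi|\mathcal{F}_{t}]:Q\in\mathcal{P}(t,P)\}$ via the pasting Lemma \ref{le2.23} combined with the standard essential-supremum fact for (2), and monotone convergence for (3). The only difference is that you spell out the monotone-class extension of the pasting identity from $L_{ip}(\Omega)$ to $\mathbb{L}^{1}(\Omega)$, a step the paper leaves implicit behind ``it is easy to deduce.''
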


\begin{proof}
(1) is due to the definition of weak convergence and $L_{ip}(\Omega)\in
C_{b}(\Omega)$. For each $Q$, $Q^{\prime}\in \mathcal{P}(t,P)$, by Lemma
\ref{le2.23}, it is easy to deduce that there exists a $Q^{\ast}\in
\mathcal{P}(t,P)$ such that $E_{Q^{\ast}}[\xi|\mathcal{F}_{t}]=E_{Q}%
[\xi|\mathcal{F}_{t}]\vee E_{Q^{\prime}}[\xi|\mathcal{F}_{t}]$, which implies
(2). (3) can be easily obtained by (2).
\end{proof}

\textbf{Proof of Theorem \ref{the2.15}.} For each $X\in L_{G}^{1}(\Omega)$,
there exists a sequence $X_{n}\in L_{ip}(\Omega)$ such that $X_{n}%
\overset{\mathbb{L}^{1}}{\rightarrow}X$. It is easy to verify that
$\mathbb{\hat{E}}_{t}[X_{n}]\overset{\mathbb{L}^{1}}{\rightarrow}%
\mathbb{\hat{E}}_{t}[X]$. By Theorem \ref{the2.12} and Lemma \ref{le2.19}, we
can get $\mathbb{\hat{E}}_{t}[X]\geq \underset{Q\in \mathcal{P}(t,P)}{ess\sup
}^{P}E_{Q}[X|\mathcal{F}_{t}]$ \  \ $P$-a.s.. Now we assert $P(\mathbb{\hat{E}%
}_{t}[X]>\underset{Q\in \mathcal{P}(t,P)}{ess\sup}^{P}E_{Q}[X|\mathcal{F}%
_{t}])=0$. Otherwise, by Lemma \ref{le2.24}, we obtain
\[
E_{P}[\mathbb{\hat{E}}_{t}[X]]>E_{P}[\underset{Q\in \mathcal{P}(t,P)}{ess\sup
}^{P}E_{Q}[X|\mathcal{F}_{t}]]=\underset{Q\in \mathcal{P}(t,P)}{\sup}E_{Q}[X].
\]
On the other hand, by Lemma \ref{le2.19}, we get%
\[
E_{P}[\mathbb{\hat{E}}_{t}[X]]=\lim_{n\rightarrow \infty}E_{P}[\mathbb{\hat{E}%
}_{t}[X_{n}]]=\lim_{n\rightarrow \infty}\underset{Q\in \mathcal{P}(t,P)}{\sup
}E_{Q}[X_{n}].
\]
Noting that $|\underset{Q\in \mathcal{P}(t,P)}{\sup}E_{Q}[X_{n}]-\underset
{Q\in \mathcal{P}(t,P)}{\sup}E_{Q}[X]|\leq \mathbb{\hat{E}}[|X_{n}-X|]$, then we
get a contradiction, which implies the result.

\section{Extension of conditional $G$-expectations}

\subsection{Extension from above}

We set%
\[
L_{G}^{1^{\ast}}(\Omega)=\{X\in \mathbb{L}^{1}(\Omega):\exists X_{n}\in
L_{G}^{1}(\Omega)\text{ such that }X_{n}\downarrow X\text{ q.s.}\},
\]%
\[
\mathcal{L}_{G}^{1^{\ast}}(\Omega)=\{X\in L^{0}(\Omega):\exists X_{n}\in
L_{G}^{1}(\Omega)\text{ such that }X_{n}\downarrow X\text{ q.s.}\}.
\]
Similarly,
\[
L_{G}^{1^{\ast}}(\Omega_{t})=\{X\in \mathbb{L}^{1}(\Omega_{t}):\exists X_{n}\in
L_{G}^{1}(\Omega_{t})\text{ such that }X_{n}\downarrow X\text{ q.s.}\},
\]%
\[
\mathcal{L}_{G}^{1^{\ast}}(\Omega_{t})=\{X\in L^{0}(\Omega_{t}):\exists
X_{n}\in L_{G}^{1}(\Omega_{t})\text{ such that }X_{n}\downarrow X\text{
q.s.}\}.
\]

Obviously, $L_{G}^{1^{\ast}}(\Omega)\subset \mathcal{L}_{G}^{1^{\ast}}(\Omega
)$. Now we give the extension of the conditional $G$-expectation from above.

\begin{definition}
\label{de1}For each $X\in \mathcal{L}_{G}^{1^{\ast}}(\Omega)$, there exists a
sequence $\{X_{n}\}_{n=1}^{\infty}\subset L_{G}^{1}(\Omega)$ such that
$X_{n}\downarrow X$ q.s., we define%
\[
\mathbb{\hat{E}}_{t}[X]=\lim_{n\rightarrow \infty}\mathbb{\hat{E}}_{t}%
[X_{n}]\text{ q.s..}%
\]

\end{definition}

We first prove that the above definition does not depend on a particular
sequence $\{X_{n}\}_{n=1}^{\infty}$.

\begin{lemma}
\label{le1}Let $\xi_{n}\in L_{G}^{1}(\Omega)$, $\xi_{n}\downarrow0$ q.s.. Then
$\mathbb{\hat{E}}_{t}[\xi_{n}]\downarrow0$ q.s..
\end{lemma}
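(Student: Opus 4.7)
The plan is to combine three ingredients already available: the tower property of the conditional $G$-expectation, the downward continuity of $\mathbb{\hat{E}}$ from Theorem \ref{the2.10}(c), and the passage from $\mathbb{L}^{1}$-convergence to quasi-sure convergence via Theorem \ref{the2.12}. The key structural remark is that since $\xi_{n}\downarrow 0$ q.s., one has $\xi_{n}\geq \xi_{n+1}\geq 0$ q.s., so by monotonicity of $\mathbb{\hat{E}}_{t}[\cdot]$ on $L_{G}^{1}(\Omega)$ the sequence $\mathbb{\hat{E}}_{t}[\xi_{n}]$ is itself nonnegative and decreasing q.s., and therefore admits a q.s.\ limit $\eta\geq 0$. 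It suffices to show $\eta=0$ q.s..

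First I would apply Theorem \ref{the2.10}(c) directly to $\{\xi_{n}\}\subset L_{G}^{1}(\Omega)$ to obtain $\mathbb{\hat{E}}[\xi_{n}]\downarrow 0$. Next, using the tower property (v) listed after Theorem \ref{the2.10}, I write
\[
\mathbb{\hat{E}}\bigl[\mathbb{\hat{E}}_{t}[\xi_{n}]\bigr]=\mathbb{\hat{E}}[\xi_{n}]\longrightarrow 0.
\]
Because $\mathbb{\hat{E}}_{t}[\xi_{n}]\geq 0$ q.s., this is exactly $\mathbb{L}^{1}$-convergence of $\mathbb{\hat{E}}_{t}[\xi_{n}]$ to $0$.

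Then by Theorem \ref{the2.12}(1), $\mathbb{\hat{E}}_{t}[\xi_{n}]\to 0$ in capacity, and by Theorem \ref{the2.12}(2) a subsequence $\mathbb{\hat{E}}_{t}[\xi_{n_{k}}]\to 0$ quasi-surely. Finally, the monotonicity observation from the first paragraph kicks in: for any $n\geq n_{k}$,
\[
0\leq \mathbb{\hat{E}}_{t}[\xi_{n}]\leq \mathbb{\hat{E}}_{t}[\xi_{n_{k}}]\quad \text{q.s.},
\]
so the full sequence inherits the quasi-sure convergence of the subsequence, giving $\mathbb{\hat{E}}_{t}[\xi_{n}]\downarrow 0$ q.s..

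I do not foresee a genuine obstacle; the only subtlety is being careful to justify that the exceptional null sets from (a) monotonicity at each $n$, (b) the subsequence extraction, and (c) the original hypothesis $\xi_{n}\downarrow 0$ together form a polar set, which is immediate because a countable union of polar sets is polar by Proposition \ref{pro2.8}(2). Everything else is an invocation of results proved earlier in the paper.
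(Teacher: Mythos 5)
Your proof is correct and rests on the same two ingredients as the paper's: the identity $\mathbb{\hat{E}}[\mathbb{\hat{E}}_{t}[\xi_{n}]]=\mathbb{\hat{E}}[\xi_{n}]\downarrow 0$ (tower property plus Theorem \ref{the2.10}) together with the nonnegativity and monotonicity of the sequence $\mathbb{\hat{E}}_{t}[\xi_{n}]$. The only difference is the final step: the paper sets $\eta=\lim_{n}\mathbb{\hat{E}}_{t}[\xi_{n}]\geq 0$ q.s.\ and concludes $\eta=0$ q.s.\ directly from $\mathbb{\hat{E}}[\eta]\leq\mathbb{\hat{E}}[\xi_{n}]\rightarrow 0$, whereas you route through $\mathbb{L}^{1}$-convergence, Theorem \ref{the2.12} and a subsequence extraction, which is slightly longer but equally valid.
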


\begin{proof}
By Theorem \ref{the2.10}, we have $\mathbb{\hat{E}}[\xi_{n}]\downarrow0$. Let
$\eta=\lim_{n\rightarrow \infty}$ $\mathbb{\hat{E}}_{t}[\xi_{n}]$, then
$\eta \geq0$ q.s. and $\mathbb{\hat{E}}[\eta]\leq \mathbb{\hat{E}}[\xi_{n}]$ for
each $n$. Thus $\mathbb{\hat{E}}[\eta]=0$, which implies $\eta=0$ q.s..
\end{proof}

\begin{proposition}
\label{pro2}Let $X\in \mathcal{L}_{G}^{1^{\ast}}(\Omega)$ and let
$\{X_{n}\}_{n=1}^{\infty}$, $\{ \tilde{X}_{n}\}_{n=1}^{\infty}$ be two
sequences in $L_{G}^{1}(\Omega)$ such that $X_{n}\downarrow X$ and $\tilde
{X}_{n}\downarrow X$ q.s.. Then
\[
\lim_{n\rightarrow \infty}\mathbb{\hat{E}}_{t}[X_{n}]=\lim_{n\rightarrow \infty
}\mathbb{\hat{E}}_{t}[\tilde{X}_{n}],\  \  \text{q.s..}%
\]

\end{proposition}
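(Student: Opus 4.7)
The plan is a standard "sandwich" argument on two monotone sequences, powered by Lemma~\ref{le1}. First I would observe that both sequences $\mathbb{\hat{E}}_t[X_n]$ and $\mathbb{\hat{E}}_t[\tilde X_n]$ are themselves q.s.-nonincreasing: by monotonicity (property (i)) of the conditional $G$-expectation on $L_G^1(\Omega)$, $X_n\downarrow$ q.s.\ forces $\mathbb{\hat{E}}_t[X_n]$ to be q.s.-nonincreasing in $n$, so the limit in the definition exists q.s.\ in $[-\infty,\infty)$. (A countable union of polar sets is still polar, so the exceptional set makes no trouble.) It then suffices to show $\lim_{n}\mathbb{\hat{E}}_t[X_n]\le\lim_{m}\mathbb{\hat{E}}_t[\tilde X_m]$ q.s.; the reverse inequality follows by symmetry.

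For fixed $m$ I would couple the two sequences through the random variable
\[
\eta_{n,m}\;:=\;(X_n-\tilde X_m)^+.
\]
Since $L_G^1(\Omega)$ is closed under subtraction and the positive part (these operations extend continuously from $L_{ip}(\Omega)$ to the $\|\cdot\|_1$-completion), we have $\eta_{n,m}\in L_G^1(\Omega)$. Moreover $X_n\downarrow X\le \tilde X_m$ q.s., so $\eta_{n,m}\downarrow 0$ q.s.\ as $n\to\infty$ with $m$ fixed. Lemma~\ref{le1} then yields $\mathbb{\hat{E}}_t[\eta_{n,m}]\downarrow 0$ q.s.\ as $n\to\infty$.

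Next I would combine the q.s.\ inequality $X_n\le \tilde X_m+\eta_{n,m}$ with properties (i) and (iii) of the conditional $G$-expectation to get
\[
\mathbb{\hat{E}}_t[X_n]\;\le\;\mathbb{\hat{E}}_t[\tilde X_m]+\mathbb{\hat{E}}_t[\eta_{n,m}]\qquad\text{q.s.}
\]
Letting $n\to\infty$ (along a common exceptional set) gives $\lim_{n}\mathbb{\hat{E}}_t[X_n]\le \mathbb{\hat{E}}_t[\tilde X_m]$ q.s.; then letting $m\to\infty$ yields the desired inequality $\lim_{n}\mathbb{\hat{E}}_t[X_n]\le\lim_{m}\mathbb{\hat{E}}_t[\tilde X_m]$ q.s.

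The only delicate point is the bookkeeping of the exceptional sets: at each step (monotone convergence of the two sequences to $X$, monotonicity of conditional expectation, Lemma~\ref{le1} applied for every $m$) a polar set appears, but all these polar sets are indexed by countable families ($n$, $m$, and pairs $(n,m)$), so their union remains polar and all the q.s.\ inequalities can be made to hold outside a single polar set. No other real obstacle is expected; the argument is essentially the classical monotone-class trick adapted to the sublinear, quasi-sure setting.
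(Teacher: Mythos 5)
Your proposal is correct and follows essentially the same route as the paper: for fixed $m$ both arguments bound $\mathbb{\hat{E}}_{t}[X_{n}]$ by $\mathbb{\hat{E}}_{t}[\tilde{X}_{m}]+\mathbb{\hat{E}}_{t}[(X_{n}-\tilde{X}_{m})^{+}]$ via sub-additivity and monotonicity, apply Lemma~\ref{le1} to $(X_{n}-\tilde{X}_{m})^{+}\downarrow 0$ q.s., and then pass to the limit in $n$ and $m$ before invoking symmetry. The extra bookkeeping you provide on countable unions of polar sets is implicit in the paper's proof and is harmless.
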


\begin{proof}
For each fixed $m$,
\[
\mathbb{\hat{E}}_{t}[X_{n}]=\mathbb{\hat{E}}_{t}[\tilde{X}_{m}+X_{n}-\tilde
{X}_{m}]\leq \mathbb{\hat{E}}_{t}[\tilde{X}_{m}]+\mathbb{\hat{E}}_{t}%
[X_{n}-\tilde{X}_{m}]\leq \mathbb{\hat{E}}_{t}[\tilde{X}_{m}]+\mathbb{\hat{E}%
}_{t}[(X_{n}-\tilde{X}_{m})^{+}].
\]
Since $X_{n}\downarrow X$ q.s., we get $(X_{n}-\tilde{X}_{m})^{+}%
\downarrow(X-\tilde{X}_{m})^{+}=0$ q.s.. By Lemma \ref{le1}, we obtain
$\lim_{n\rightarrow \infty}\mathbb{\hat{E}}_{t}[X_{n}]$ $\leq \mathbb{\hat{E}%
}_{t}[\tilde{X}_{m}]$ q.s.. Thus $\lim_{n\rightarrow \infty}\mathbb{\hat{E}%
}_{t}[X_{n}]$ $\leq \lim_{n\rightarrow \infty}\mathbb{\hat{E}}_{t}[\tilde{X}%
_{n}]$ q.s.. Similarly, we can prove that $\lim_{n\rightarrow \infty
}\mathbb{\hat{E}}_{t}[\tilde{X}_{n}]\leq \lim_{n\rightarrow \infty}%
\mathbb{\hat{E}}_{t}[X_{n}]$ q.s.. The proof is complete.
\end{proof}

\begin{remark}
For each given $X\in \mathcal{L}_{G}^{1^{\ast}}(\Omega)$, there exists a
sequence $\{X_{n}\}_{n=1}^{\infty}\subset L_{G}^{1}(\Omega)$ such that
$X_{n}\downarrow X$ q.s.. We have $\mathbb{\hat{E}}_{t}[X_{n}]\downarrow
\mathbb{\hat{E}}_{t}[X]$ q.s., but we do not have $\mathbb{\hat{E}%
}[|\mathbb{\hat{E}}_{t}[X_{n}]-\mathbb{\hat{E}}_{t}[X]|]\rightarrow0$.
\end{remark}

Now we study the properties of the above conditional $G$-expectation.

\begin{proposition}
\label{pro3}We have

\begin{description}
\item[(1)] $\mathbb{\hat{E}}_{0}[X]=\mathbb{\hat{E}}[X]$ for $X\in
\mathcal{L}_{G}^{1^{\ast}}(\Omega)$;

\item[(2)] $X,Y\in \mathcal{L}_{G}^{1^{\ast}}(\Omega)$, $X\leq Y\ $%
q.s.$\Longrightarrow \mathbb{\hat{E}}_{t}[X]\leq \mathbb{\hat{E}}_{t}%
[Y]\  \ $q$_{.}$s$_{.}$;

\item[(3)] $X\in \mathcal{L}_{G}^{1^{\ast}}(\Omega_{t})$, $Y\in \mathcal{L}%
_{G}^{1^{\ast}}(\Omega)\Longrightarrow \mathbb{\hat{E}}_{t}[X+Y]=X+\mathbb{\hat
{E}}_{t}[Y]$;

\item[(4)] $X,Y\in \mathcal{L}_{G}^{1^{\ast}}(\Omega)\Longrightarrow
\mathbb{\hat{E}}_{t}[X+Y]\leq \mathbb{\hat{E}}_{t}[X]+\mathbb{\hat{E}}_{t}[Y]$;

\item[(5)] $X\in \mathcal{L}_{G}^{1^{\ast}}(\Omega_{t})$ is bounded, $X\geq0$,
$Y\in \mathcal{L}_{G}^{1^{\ast}}(\Omega)$, $Y\geq0\Longrightarrow
XY\in \mathcal{L}_{G}^{1^{\ast}}(\Omega)$ and $\mathbb{\hat{E}}_{t}%
[XY]=X\mathbb{\hat{E}}_{t}[Y]$;

\item[(6)] $X\in \mathcal{L}_{G}^{1^{\ast}}(\Omega)\Longrightarrow
\mathbb{\hat{E}}_{t}[X]\in \mathcal{L}_{G}^{1^{\ast}}(\Omega_{t})$ and
$\mathbb{\hat{E}}_{s}[\mathbb{\hat{E}}_{t}[X]]=\mathbb{\hat{E}}_{s\wedge
t}[X]$;

\item[(7)] $\{X_{n}\}_{n=1}^{\infty}\subset \mathcal{L}_{G}^{1^{\ast}}(\Omega
)$, $X_{n}\downarrow X$ q.s.$\Longrightarrow X\in \mathcal{L}_{G}^{1^{\ast}%
}(\Omega)$ and $\mathbb{\hat{E}}_{t}[X]=\lim_{n\rightarrow \infty}%
\mathbb{\hat{E}}_{t}[X_{n}]$ q.s.;

\item[(8)] $\mathbb{\hat{E}}_{t}[X^{+}]\geq(\mathbb{\hat{E}}_{t}[X])^{+}$ for
$X\in \mathcal{L}_{G}^{1^{\ast}}(\Omega)$;

\item[(9)] If $\{X_{n}\}_{n=1}^{\infty}\subset L_{G}^{1}(\Omega)$ such that
$X_{n}\downarrow X$ q.s. and $X\in L^{0}(\Omega_{t})$, then $\mathbb{\hat{E}%
}_{t}[X_{n}]\downarrow X$ q.s.; ;

\item[(10)] $\mathcal{L}_{G}^{1^{\ast}}(\Omega_{t})=\mathcal{L}_{G}^{1^{\ast}%
}(\Omega)\cap L^{0}(\Omega_{t})$.
\end{description}
\end{proposition}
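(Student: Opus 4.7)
The plan is to handle parts (1)--(8) by standard approximation, reserving (9)--(10) for a representation-theoretic argument. Throughout, for $X, Y \in \mathcal{L}_G^{1^*}(\Omega)$, I fix approximating sequences $X_n \downarrow X$, $Y_n \downarrow Y$ in $L_G^1(\Omega)$ (and in $L_G^1(\Omega_t)$ when appropriate). Part (1) is Theorem \ref{the2.10}(c) applied to $\mathbb{\hat{E}}_0[X_n] = \mathbb{\hat{E}}[X_n]$. For (2) I would mimic the proof of Proposition \ref{pro2}: from $X_n \leq Y_m + (X_n - Y_m)^+$ take $\mathbb{\hat{E}}_t$ and sub-additivity; since $(X_n - Y_m)^+ \in L_G^1(\Omega)$ decreases to $(X - Y_m)^+ = 0$ q.s.\ (as $X \leq Y \leq Y_m$), Lemma \ref{le1} gives $\mathbb{\hat{E}}_t[(X_n - Y_m)^+] \downarrow 0$; letting $n \to \infty$ then $m \to \infty$ yields $\mathbb{\hat{E}}_t[X] \leq \mathbb{\hat{E}}_t[Y]$. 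Property (3) comes from the identity $\mathbb{\hat{E}}_t[X_n + Y_n] = X_n + \mathbb{\hat{E}}_t[Y_n]$ on $L_G^1$ (property (ii)) applied to $X_n + Y_n \in L_G^1(\Omega) \downarrow X + Y$, and (4) is analogous using sub-additivity.

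For (5), I truncate so that $0 \leq X_n \leq C+1$ (with $C$ an upper bound for $X$) and $Y_n \geq 0$ via Lipschitz compositions that preserve $L_G^1$-membership and monotonicity; then $X_n Y_n \in L_G^1(\Omega)$ (a bounded quasi-continuous factor times an $L_G^1$ element still satisfies Theorem \ref{the2.9}), $X_n Y_n \downarrow XY$ q.s., and property (iv) gives $\mathbb{\hat{E}}_t[X_n Y_n] = X_n \mathbb{\hat{E}}_t[Y_n]$; limits finish the argument. Part (6) is immediate: $\mathbb{\hat{E}}_t[X_n] \in L_G^1(\Omega_t) \downarrow \mathbb{\hat{E}}_t[X]$, certifying $\mathbb{\hat{E}}_t[X] \in \mathcal{L}_G^{1^*}(\Omega_t)$, and the tower relation $\mathbb{\hat{E}}_s[\mathbb{\hat{E}}_t[X_n]] = \mathbb{\hat{E}}_{s \wedge t}[X_n]$ passes to the limit. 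For (7), I use a diagonal construction: with $X_{n,k} \downarrow_k X_n$ in $L_G^1$, set $Z_n = \min_{1 \leq i \leq n} X_{i,n} \in L_G^1$; this is decreasing, bounded below by $X_n \geq X$ and above by each $X_{k,n} \to X_k$, so $Z_n \downarrow X$ q.s., and monotonicity of conditional expectations sandwiches $\lim_n \mathbb{\hat{E}}_t[X_n]$ between $\mathbb{\hat{E}}_t[X]$ and $\lim_n \mathbb{\hat{E}}_t[Z_n] = \mathbb{\hat{E}}_t[X]$. Part (8) uses (2): $X_n^+ \downarrow X^+$ with $X_n^+ \in L_G^1$, so $X^+ \in \mathcal{L}_G^{1^*}$, and $\mathbb{\hat{E}}_t[X^+] \geq \max(\mathbb{\hat{E}}_t[X], 0)$.

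The main obstacle is (9), from which (10) follows at once: for $X \in \mathcal{L}_G^{1^*}(\Omega) \cap L^0(\Omega_t)$, the sequence $\mathbb{\hat{E}}_t[X_n] \in L_G^1(\Omega_t)$ decreases q.s.\ to $X$ by (9). To prove (9), set $Y := \mathbb{\hat{E}}_t[X] = \lim_n \mathbb{\hat{E}}_t[X_n]$; Theorem \ref{the2.15} combined with the $\mathcal{F}_t$-measurability of $X$ yields $Y \geq X$ q.s. The heart of the argument is to show $E_P[Y] = E_P[X]$ for every $P \in \mathcal{P}$, which together with $Y - X \geq 0$ forces $Y = X$ q.s. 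For bounded $X$, Lemma \ref{le2.24}(3) gives $E_P[\mathbb{\hat{E}}_t[X_n]] = \sup_{Q \in \mathcal{P}(t,P)} E_Q[X_n]$, which decreases to $E_P[Y]$ by classical monotone convergence. Choose $\varepsilon_n \to 0$ and near-optimal $Q_n \in \mathcal{P}(t,P)$, and extract a weakly convergent subsequence $Q_{n_k} \to Q^* \in \mathcal{P}(t,P)$ via Lemma \ref{le2.24}(1). For each fixed $m$ and $n_k \geq m$, $E_{Q_{n_k}}[X_{n_k}] \leq E_{Q_{n_k}}[X_m] \to E_{Q^*}[X_m]$ by Theorem \ref{the2.10}(b); sending $m \to \infty$ and using $Q^* = P$ on $\mathcal{F}_t$ gives $\limsup_k E_{Q_{n_k}}[X_{n_k}] \leq E_{Q^*}[X] = E_P[X]$. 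The matching lower bound $E_{Q_n}[X_n] \geq E_P[X_n] - \varepsilon_n \to E_P[X]$ (since $P \in \mathcal{P}(t,P)$) closes the bounded case. The unbounded case reduces to the bounded one by truncating $X_n$ to $(X_n \wedge M) \vee (-M)$ and passing $M \to \infty$; I expect the monotonicity of the approximations to make this limit interchange routine, though verifying it cleanly when $X$ takes the value $-\infty$ on a set of positive capacity is the remaining technicality.
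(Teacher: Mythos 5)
Your treatment of (1)--(8) and (10) is correct and essentially coincides with the paper's: the paper proves (2) via $X_{n}\vee Y_{n}\downarrow Y$ rather than your $(X_{n}-Y_{m})^{+}$ decomposition, declares (3)--(5) obvious where you spell them out, and your diagonal construction in (7) is exactly the paper's $\eta_{n}=\wedge_{i=1}^{n}\xi_{n}^{i}$. The genuine divergence is in (9). The paper argues by contradiction twice: it selects a compact set $K\in\mathcal{F}_{t}$ of positive capacity inside the bad event (splitting into the cases $X=-\infty$ and $|X|<\infty$) and derives $\mathbb{\hat{E}}[(X+k)I_{K}]\geq\mathbb{\hat{E}}[(X+\frac{1}{k}+k)I_{K}]$, contradicting Proposition \ref{pro4}. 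You instead invoke the representation Theorem \ref{the2.15} and weak compactness of $\mathcal{P}(t,P)$ to show $E_{P}[\lim_{n}\mathbb{\hat{E}}_{t}[X_{n}]]=E_{P}[X]$ for each $P$. This is legitimate (Theorem \ref{the2.15} is proved independently of Proposition \ref{pro3}, so there is no circularity) and is in substance the same mechanism the paper deploys only later, in Proposition \ref{pro3.2}; your derivation of $\mathbb{\hat{E}}_{t}[X_{n}]\geq E_{P}[X_{n}|\mathcal{F}_{t}]\geq X$ is arguably cleaner than the paper's first contradiction argument.

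The gap is the passage from bounded $X$ to general $X\in L^{0}(\Omega_{t})$, which you leave open. Your scheme needs $E_{P}[Y]=E_{P}[X]$ together with $Y\geq X$ to force $Y=X$ $P$-a.s., and this breaks down exactly when $E_{P}[X]=-\infty$ --- which can occur, since membership in $\mathcal{L}_{G}^{1^{\ast}}(\Omega)$ imposes no integrability from below and $X$ may even equal $-\infty$ on a set of positive capacity (this is precisely Case 1 of the paper's proof). The two-sided truncation $(X_{n}\wedge M)\vee(-M)$ you propose does not obviously transfer the conclusion back, because the error from the upper truncation, $\mathbb{\hat{E}}_{t}[(X_{n}-M)^{+}]$, need not vanish. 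The repair is to truncate only from below: your compactness argument requires only $X\geq-M$ (so that $X\in\mathbb{L}^{1}$), not $X\leq M$; applying it to $X_{n}\vee(-M)\downarrow X\vee(-M)$ gives $\eta:=\lim_{n}\mathbb{\hat{E}}_{t}[X_{n}]\leq\lim_{n}\mathbb{\hat{E}}_{t}[X_{n}\vee(-M)]=X\vee(-M)$ for every $M$, and letting $M\to\infty$ yields $\eta\leq X$ even on $\{X=-\infty\}$. As written, though, the unbounded case --- the part of (9) on which the paper spends most of its effort --- is not established.
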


\begin{proof}
(1) This is part (c) in Theorem \ref{the2.10}.

(2) Let $\{X_{n}\}_{n=1}^{\infty}$, $\{Y_{n}\}_{n=1}^{\infty}\subset L_{G}%
^{1}(\Omega)$ such that $X_{n}\downarrow X$ q.s. and $Y_{n}\downarrow Y$ q.s..
We have
\[
X_{n}\leq(X_{n}\vee Y_{n})\downarrow(X\vee Y)=Y,\  \  \text{q.s..}%
\]
Thus%
\[
\mathbb{\hat{E}}_{t}[Y]=\lim_{n\rightarrow \infty}\mathbb{\hat{E}}_{t}%
[X_{n}\vee Y_{n}]\geq \lim_{n\rightarrow \infty}\mathbb{\hat{E}}_{t}%
[X_{n}]=\mathbb{\hat{E}}_{t}[X],\  \  \text{ q.s..}%
\]

(3), (4) and (5) are obvious.

(6) Let $\{X_{n}\}_{n=1}^{\infty}\subset L_{G}^{1}(\Omega)$ be such that
$X_{n}\downarrow X$ q.s.. Then $\mathbb{\hat{E}}_{t}[X_{n}]\in L_{G}%
^{1}(\Omega_{t})$ and $\mathbb{\hat{E}}_{t}[X_{n}]\downarrow \mathbb{\hat{E}%
}_{t}[X]$ q.s.. Thus $\mathbb{\hat{E}}_{t}[X]\in \mathcal{L}_{G}^{1^{\ast}%
}(\Omega_{t})$. Moreover, the definition of $\mathbb{\hat{E}}_{s}$ implies
\[
\mathbb{\hat{E}}_{s}[\mathbb{\hat{E}}_{t}[X]]=\lim_{n\rightarrow \infty
}\mathbb{\hat{E}}_{s}[\mathbb{\hat{E}}_{t}[X_{n}]]=\lim_{n\rightarrow \infty
}\mathbb{\hat{E}}_{s\wedge t}[X_{n}]=\mathbb{\hat{E}}_{s\wedge t}[X].
\]

(7) Let $\{ \xi_{m}^{n}\}_{m=1}^{\infty}\subset L_{G}^{1}(\Omega)$ be such
that $\xi_{m}^{n}\downarrow X_{n}$ q.s. as $m\rightarrow \infty$. We set
$\eta_{n}=\wedge_{i=1}^{n}\xi_{n}^{i}\in L_{G}^{1}(\Omega)$. It is easy to
check that $\eta_{n}\geq X_{n}$ q.s. and $\eta_{n}\downarrow X$ q.s.. Thus we
get $X\in \mathcal{L}_{G}^{1^{\ast}}(\Omega)$ and
\[
\mathbb{\hat{E}}_{t}[X]=\lim_{n\rightarrow \infty}\mathbb{\hat{E}}_{t}[\eta
_{n}]\geq \lim_{n\rightarrow \infty}\mathbb{\hat{E}}_{t}[X_{n}],\  \text{ q.s..}%
\]
By (2) we have $\mathbb{\hat{E}}_{t}[X]\leq \lim_{n\rightarrow \infty
}\mathbb{\hat{E}}_{t}[X_{n}]$ q.s.. Thus we get $\mathbb{\hat{E}}_{t}%
[X]=\lim_{n\rightarrow \infty}\mathbb{\hat{E}}_{t}[X_{n}]$ q.s..

(8) Obviously $X^{+}\in \mathcal{L}_{G}^{1^{\ast}}(\Omega)$ and $X^{+}\geq X$.
By (2) we have $\mathbb{\hat{E}}_{t}[X^{+}]\geq \mathbb{\hat{E}}_{t}[X]$, q.s..
Thus $\mathbb{\hat{E}}_{t}[X^{+}]=(\mathbb{\hat{E}}_{t}[X^{+}])^{+}%
\geq(\mathbb{\hat{E}}_{t}[X])^{+}$.

(9) For each fixed $n$, we first prove that $\mathbb{\hat{E}}_{t}[X_{n}]\geq
X$ q.s.. Otherwise\ we have $c(\{X>\mathbb{\hat{E}}_{t}[X_{n}]\})>0$. Since
\[
\{X>\mathbb{\hat{E}}_{t}[X_{n}]+\frac{1}{k}\} \uparrow \{X>\mathbb{\hat{E}}%
_{t}[X_{n}]\} \  \text{as }k\rightarrow \infty \text{,}\
\]
by (3) and (5) in Proposition \ref{pro2.8}, we can choose a constant
$\delta>0$ and a compact set $K\in \mathcal{F}_{t}$ such that $K\subset
\{X>\mathbb{\hat{E}}_{t}[X_{n}]+\delta \}$ and $c(K)>0$. It is easy to verify
that $I_{K}\in \mathcal{L}_{G}^{1^{\ast}}(\Omega_{t})$. By (1), (5), (6) and
(8), we have%
\begin{align*}
\mathbb{\hat{E}}[(X-X_{n})^{+}I_{K}]  &  \geq \mathbb{\hat{E}}[(\mathbb{\hat
{E}}_{t}[X_{n}]+\delta-X_{n})^{+}I_{K}]\\
&  =\mathbb{\hat{E}}[\mathbb{\hat{E}}_{t}[(\mathbb{\hat{E}}_{t}[X_{n}%
]+\delta-X_{n})^{+}I_{K}]]\\
&  =\mathbb{\hat{E}}[\mathbb{\hat{E}}_{t}[(\mathbb{\hat{E}}_{t}[X_{n}%
]+\delta-X_{n})^{+}]I_{K}]\\
&  \geq \mathbb{\hat{E}}[(\mathbb{\hat{E}}_{t}[X_{n}]+\delta+\mathbb{\hat{E}%
}_{t}[-X_{n}])^{+}I_{K}]\\
&  \geq \mathbb{\hat{E}}[\delta I_{K}]=\delta c(K)>0,
\end{align*}
which contradicts to $X\leq X_{n}$ q.s.. Thus $\mathbb{\hat{E}}_{t}[X_{n}]\geq
X$ q.s. for each $n\geq1$.

Let $\eta=\lim_{n\rightarrow \infty}\mathbb{\hat{E}}_{t}[X_{n}]\geq X$ q.s..
Now we show that $\eta=X$ q.s.. Otherwise $c(\{ \eta>X\})>0$. Obviously
$c(\{X=\infty \})=0$. We divide the proof into two cases.

Case 1: $c(\{ \eta>X\} \cap \{X=-\infty \})>0$. \ Since $\{ \eta>-M\}
\cap \{X=-\infty \} \uparrow \{ \eta>X\} \cap \{X=-\infty \}$ as $M\rightarrow
\infty$, by (3) and (5) in Proposition \ref{pro2.8}, we can choose a constant
$M>0$ and a compact set $K\in \mathcal{F}_{t}$ such that $K\subset \{ \eta>-M\}
\cap \{X=-\infty \}$ and $c(K)>0$. By Theorem \ref{the2.11}, we can get%
\begin{align*}
\mathbb{\hat{E}}[(X_{n}+M+1)I_{K}]  &  =\mathbb{\hat{E}}[(X_{n}+M+1)I_{K}%
+0I_{K^{c}}]\\
&  =\mathbb{\hat{E}}[I_{K}(\mathbb{\hat{E}}_{t}[X_{n}]+M+1)]\\
&  \geq \mathbb{\hat{E}}[I_{K}(\eta+M+1)]\\
&  \geq \mathbb{\hat{E}}[I_{K}]=c(K)>0.
\end{align*}
On the other hand, it is easy to check that $(X_{n}+M+1)^{+}I_{K}%
\in \mathcal{L}_{G}^{1^{\ast}}(\Omega)$ and $(X_{n}+M+1)^{+}I_{K}%
\downarrow(X+M+1)^{+}I_{K}=0$ q.s.. By (1) and (7) we get $\mathbb{\hat{E}%
}[(X_{n}+M+1)^{+}I_{K}]\downarrow0$. But $\mathbb{\hat{E}}[(X_{n}%
+M+1)I_{K}]\leq \mathbb{\hat{E}}[(X_{n}+M+1)^{+}I_{K}]$, which implies a
contradiction as $n\rightarrow \infty$.

Case 2: $c(\{ \eta>X\} \cap \{|X|<\infty \})>0$. Since $\{ \eta>X+\frac{1}{k}\}
\cap \{|X|<k\} \uparrow \{ \eta>X\} \cap \{|X|<\infty \}$ as $k\rightarrow \infty$,
by (3) and (5) in Proposition \ref{pro2.8}, we can choose a constant $k>0$ and
a compact set $K\in \mathcal{F}_{t}$ such that $K\subset \{ \eta>X+\frac{1}{k}\}
\cap \{|X|<k\}$ and $c(K)>0$. It is easy to check that $(X_{n}+k)I_{K}%
=(X_{n}+k)^{+}I_{K}\in \mathcal{L}_{G}^{1^{\ast}}(\Omega)\downarrow
(X+k)I_{K}\geq0$ q.s.. By (1), (3) and (7), we obtain $\mathbb{\hat{E}}%
[(X_{n}+k)I_{K}]\downarrow \mathbb{\hat{E}}[(X+k)I_{K}]$ and%
\begin{align*}
\mathbb{\hat{E}}[(X_{n}+k)I_{K}]  &  =\mathbb{\hat{E}}[\mathbb{\hat{E}}%
_{t}[(X_{n}+k)^{+}I_{K}]]=\mathbb{\hat{E}}[\mathbb{\hat{E}}_{t}[(X_{n}%
+k)^{+}]I_{K}]\\
&  \geq \mathbb{\hat{E}}[(\mathbb{\hat{E}}_{t}[X_{n}]+k)^{+}I_{K}%
]\geq \mathbb{\hat{E}}[(\eta+k)^{+}I_{K}]\\
&  \geq \mathbb{\hat{E}}[(X+\frac{1}{k}+k)^{+}I_{K}]=\mathbb{\hat{E}}%
[(X+\frac{1}{k}+k)I_{K}],
\end{align*}
which implies $\mathbb{\hat{E}}[(X+k)I_{K}]\geq \mathbb{\hat{E}}[(X+\frac{1}%
{k}+k)I_{K}]$. This is a contradiction by the following Proposition.

(10) can be easily deduced from (9).
\end{proof}

The following Proposition is useful in this paper.

\begin{proposition}
\label{pro4}Let $A\in \mathcal{B}(\Omega)$ with $c(A)>0$, $X\in \mathbb{L}%
^{1}(\Omega)$ be such that $0\leq XI_{A}\leq M$, where $M>0$ is a constant.
Then for each $\delta>0$, we have $\mathbb{\hat{E}}[(X+\delta)I_{A}%
]>\mathbb{\hat{E}}[XI_{A}]$.
\end{proposition}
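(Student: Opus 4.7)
The plan is to split into two cases depending on whether $\mathbb{\hat{E}}[XI_{A}]$ vanishes or not, using in both cases only monotonicity plus the representation $\mathbb{\hat{E}}[\cdot]=\sup_{P\in\mathcal{P}}E_{P}[\cdot]$. The crucial quantitative observation is that the bound $0\leq XI_{A}\leq MI_{A}$ forces any measure that gives $XI_{A}$ nearly maximal expectation to place nontrivial mass on $A$.

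In the easy case $\mathbb{\hat{E}}[XI_{A}]=0$, I would argue directly from monotonicity. Since $(X+\delta)I_{A}\geq \delta I_{A}$ quasi-surely, monotonicity of $\mathbb{\hat{E}}$ gives
\[
\mathbb{\hat{E}}[(X+\delta)I_{A}]\geq \delta \mathbb{\hat{E}}[I_{A}]=\delta c(A)>0=\mathbb{\hat{E}}[XI_{A}],
\]
where the last strict inequality uses the hypothesis $c(A)>0$. This is the place where that hypothesis is essential.

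In the remaining case $\mathbb{\hat{E}}[XI_{A}]>0$, I would pick a maximizing sequence $\{P_{n}\}_{n=1}^{\infty}\subset\mathcal{P}$ with $E_{P_{n}}[XI_{A}]\to \mathbb{\hat{E}}[XI_{A}]$. The inequality $XI_{A}\leq MI_{A}$ yields $E_{P_{n}}[XI_{A}]\leq M P_{n}(A)$, so
\[
\liminf_{n\to\infty} P_{n}(A)\geq \frac{\mathbb{\hat{E}}[XI_{A}]}{M}=:\kappa>0.
\]
Then for every $n$,
\[
\mathbb{\hat{E}}[(X+\delta)I_{A}]\geq E_{P_{n}}[(X+\delta)I_{A}]=E_{P_{n}}[XI_{A}]+\delta P_{n}(A),
\]
and taking $\liminf$ on the right gives $\mathbb{\hat{E}}[(X+\delta)I_{A}]\geq \mathbb{\hat{E}}[XI_{A}]+\delta\kappa>\mathbb{\hat{E}}[XI_{A}]$, which is exactly what we want.

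I do not expect any serious obstacle: the sublinear representation and the boundedness hypothesis do all the work, and no weak-compactness or continuity of $XI_{A}$ is required because we only pass to liminf along the maximizing sequence rather than extract a weak limit. The only point to handle carefully is the degenerate case $\mathbb{\hat{E}}[XI_{A}]=0$, which is why the case split is made up front.
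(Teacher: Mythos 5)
Your proof is correct and uses the same key mechanism as the paper's: a maximizing sequence $\{P_{n}\}$ together with the bound $E_{P_{n}}[XI_{A}]\leq MP_{n}(A)$ to force a uniform lower bound on $P_{n}(A)$, hence a gain of at least $\delta\cdot P_{n}(A)$ when passing to $(X+\delta)I_{A}$. The only difference is organizational — you split directly into the cases $\mathbb{\hat{E}}[XI_{A}]=0$ and $\mathbb{\hat{E}}[XI_{A}]>0$, whereas the paper argues by contradiction (assuming equality, which forces $\mathbb{\hat{E}}[XI_{A}]\geq\delta c(A)>0$ and then runs the same estimate); both are sound.
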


\begin{proof}
Otherwise, $\mathbb{\hat{E}}[XI_{A}]=\mathbb{\hat{E}}[(X+\delta)I_{A}%
]\geq \mathbb{\hat{E}}[\delta I_{A}]=\delta c(A)>0$. By the definition of
$\mathbb{\hat{E}}[XI_{A}]$, there exists a sequence $P_{n}\in \mathcal{P}$ such
that $E_{P_{n}}[XI_{A}]\geq \mathbb{\hat{E}}[XI_{A}]-\frac{1}{n}\delta
c(A)\geq \frac{n-1}{n}\delta c(A)$. On the other hand, $E_{P_{n}}[XI_{A}]\leq
MP_{n}(A)$, which implies $P_{n}(A)\geq \frac{1}{2M}\delta c(A)$ for each
$n\geq2$. Thus we get%
\begin{align*}
\mathbb{\hat{E}}[(X+\delta)I_{A}]  &  \geq \sup_{n}E_{P_{n}}[(X+\delta
)I_{A}]=\sup_{n}\{E_{P_{n}}[XI_{A}]+\delta P_{n}(A)\} \\
&  \geq \sup_{n}\{E_{P_{n}}[XI_{A}]+\frac{1}{2M}\delta^{2}c(A)\}=\frac{1}%
{2M}\delta^{2}c(A)+\sup_{n}E_{P_{n}}[XI_{A}]\\
&  =\frac{1}{2M}\delta^{2}c(A)+\mathbb{\hat{E}}[XI_{A}]>\mathbb{\hat{E}%
}[XI_{A}],
\end{align*}
which implies a contradiction, the proof is complete.
\end{proof}

\subsection{Extension from below}

We set%
\[
L_{G}^{1_{\ast}^{\ast}}(\Omega)=\{X\in \mathbb{L}^{1}(\Omega):\exists X_{n}\in
L_{G}^{1^{\ast}}(\Omega)\text{ such that }X_{n}\uparrow X\text{ q.s.}\},
\]%
\[
\mathcal{L}_{G}^{1_{\ast}^{\ast}}(\Omega)=\{X\in L^{0}(\Omega):\exists
X_{n}\in L_{G}^{1^{\ast}}(\Omega)\text{ such that }X_{n}\uparrow X\text{
q.s.}\}.
\]
Similarly,
\[
L_{G}^{1_{\ast}^{\ast}}(\Omega_{t})=\{X\in \mathbb{L}^{1}(\Omega_{t}):\exists
X_{n}\in L_{G}^{1^{\ast}}(\Omega_{t})\text{ such that }X_{n}\uparrow X\text{
q.s.}\},
\]%
\[
\mathcal{L}_{G}^{1_{\ast}^{\ast}}(\Omega_{t})=\{X\in L^{0}(\Omega_{t}):\exists
X_{n}\in L_{G}^{1^{\ast}}(\Omega_{t})\text{ such that }X_{n}\uparrow X\text{
q.s.}\}.
\]
Obviously, $L_{G}^{1^{\ast}}(\Omega)\subset L_{G}^{1_{\ast}^{\ast}}%
(\Omega)\subset \mathcal{L}_{G}^{1_{\ast}^{\ast}}(\Omega)$. Now we give the
extension of the conditional $G$-expectation from below.

\begin{definition}
\label{de2}For each $X\in \mathcal{L}_{G}^{1_{\ast}^{\ast}}(\Omega)$, there
exists a sequence $\{X_{n}\}_{n=1}^{\infty}\subset L_{G}^{1^{\ast}}(\Omega)$
such that $X_{n}\uparrow X$ q.s., we define%
\[
\mathbb{\hat{E}}_{t}[X]=\lim_{n\rightarrow \infty}\mathbb{\hat{E}}_{t}%
[X_{n}]\text{ q.s..}%
\]

\end{definition}

We first prove that the above definition of $\mathbb{\hat{E}}_{t}[X]$ does not
depend on a particular choice of $X_{n}\uparrow X$:

\begin{lemma}
\label{le3.1} Let $X\in \mathcal{L}_{G}^{1^{\ast}}(\Omega)$ and a closed set
$K\in \mathcal{F}_{t}$ with $X\geq0$ on $K$. Then $\mathbb{\hat{E}}_{t}%
[X^{+}]I_{K}=\mathbb{\hat{E}}_{t}[X]I_{K}$ q.s..
\end{lemma}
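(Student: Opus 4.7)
The plan is to prove the lemma first under the stronger assumption $X\in L_{G}^{1}(\Omega)$ and then bootstrap to $\mathcal{L}_{G}^{1^{\ast}}(\Omega)$ via the downward approximation that defines this class.

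\textbf{Step 1: the case $X\in L_{G}^{1}(\Omega)$.} Because $x\mapsto x^{+}$ is $1$-Lipschitz, $X^{+}$ and $X^{-}$ both lie in $L_{G}^{1}(\Omega)$, and monotonicity already gives $\mathbb{\hat{E}}_{t}[X^{+}]\geq \mathbb{\hat{E}}_{t}[X]$ q.s., so only the reverse inequality on $K$ is at issue. Since $K$ is closed and $\mathcal{F}_{t}$-measurable, I would approximate $I_{K}$ from above by a decreasing sequence of bounded continuous $\mathcal{F}_{t}$-measurable functions $\xi_{n}\in L_{G}^{1}(\Omega_{t})$ with $\xi_{n}\downarrow I_{K}$ q.s. (e.g.\ $\xi_{n}=(1-n\,d(\cdot,K))^{+}$ in a suitable metric on $\Omega_{t}$). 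Applying property (iv) twice (with $Y=X^{+}$ and with $Y=X$, noting $\xi_{n}\geq0$) gives
$$\xi_{n}\bigl(\mathbb{\hat{E}}_{t}[X^{+}]-\mathbb{\hat{E}}_{t}[X]\bigr)=\mathbb{\hat{E}}_{t}[\xi_{n}X^{+}]-\mathbb{\hat{E}}_{t}[\xi_{n}X],$$
and sub-additivity (iii) applied to $X^{+}=X+X^{-}$ yields $\mathbb{\hat{E}}_{t}[\xi_{n}X^{+}]\leq \mathbb{\hat{E}}_{t}[\xi_{n}X]+\mathbb{\hat{E}}_{t}[\xi_{n}X^{-}]$. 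Setting $\Delta:=\mathbb{\hat{E}}_{t}[X^{+}]-\mathbb{\hat{E}}_{t}[X]\geq0$, this gives $\xi_{n}\Delta\leq \mathbb{\hat{E}}_{t}[\xi_{n}X^{-}]$; taking $\mathbb{\hat{E}}$ and using the tower property,
$$\mathbb{\hat{E}}[\xi_{n}\Delta]\leq \mathbb{\hat{E}}[\xi_{n}X^{-}].$$
Since $X^{-}=0$ on $K$, we have $\xi_{n}X^{-}\downarrow I_{K}X^{-}=0$ q.s., so Theorem \ref{the2.10}(c) forces $\mathbb{\hat{E}}[\xi_{n}X^{-}]\downarrow0$. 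The same theorem applied to the $L_{G}^{1}$ sequence $\xi_{n}\Delta\downarrow I_{K}\Delta$ gives $\mathbb{\hat{E}}[I_{K}\Delta]=0$, hence $I_{K}\Delta=0$ q.s., which is the desired equality.

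\textbf{Step 2: general $X\in \mathcal{L}_{G}^{1^{\ast}}(\Omega)$.} Pick $X_{n}\in L_{G}^{1}(\Omega)$ with $X_{n}\downarrow X$ q.s. Since $X\geq0$ on $K$, $X_{n}\geq X\geq0$ on $K$ for every $n$, so Step 1 applies and $\mathbb{\hat{E}}_{t}[X_{n}^{+}]I_{K}=\mathbb{\hat{E}}_{t}[X_{n}]I_{K}$ q.s. Monotonicity of $x\mapsto x^{+}$ yields $X_{n}^{+}\downarrow X^{+}$ q.s.\ with $X_{n}^{+}\in L_{G}^{1}(\Omega)$, so $X^{+}\in \mathcal{L}_{G}^{1^{\ast}}(\Omega)$ and Definition \ref{de1} gives $\mathbb{\hat{E}}_{t}[X_{n}^{+}]\downarrow \mathbb{\hat{E}}_{t}[X^{+}]$ and $\mathbb{\hat{E}}_{t}[X_{n}]\downarrow \mathbb{\hat{E}}_{t}[X]$ q.s. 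Multiplying by $I_{K}$ and passing to the limit concludes the proof.

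\textbf{Main obstacle.} The central difficulty is to transfer the pointwise sign information ``$X\geq0$ on $K$'' through the conditional expectation, which is a global operation. The key device is the monotone approximation $\xi_{n}\downarrow I_{K}$ (available because $K$ is closed) combined with property (iv), which turns the product $\xi_{n}\mathbb{\hat{E}}_{t}[\cdot]$ into an honest conditional expectation of $\xi_{n}$ times the argument, whereupon sub-additivity produces the required one-sided bound. The careful point is verifying that the sequences $\xi_{n}\Delta$ and $\xi_{n}X^{-}$ remain in $L_{G}^{1}(\Omega)$ so that the downward continuity in Theorem \ref{the2.10}(c) applies in both places where limits are taken.
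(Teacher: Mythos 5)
Your proof is correct, and its overall skeleton --- approximate $X$ from above by $X_{n}\in L_{G}^{1}(\Omega)$ with $X_{n}\downarrow X$ q.s., establish $\mathbb{\hat{E}}_{t}[X_{n}^{+}]I_{K}=\mathbb{\hat{E}}_{t}[X_{n}]I_{K}$ for each $n$, then pass to the limit via Definition \ref{de1} --- is exactly the paper's. The difference lies entirely in how the identity is obtained for a fixed $X_{n}\in L_{G}^{1}(\Omega)$: the paper simply notes that $X_{n}^{+}I_{K}=X_{n}I_{K}$ q.s. (since $X_{n}\geq X\geq 0$ on $K$) and invokes the localization result, Theorem \ref{the2.11}(1), which holds for an arbitrary $A\in\mathcal{F}_{t}$; your Step 1 instead reproves the special case it needs from the listed properties of $\mathbb{\hat{E}}_{t}$, by sandwiching $I_{K}$ under continuous $\xi_{n}\downarrow I_{K}$ (this is the one place where you genuinely use that $K$ is closed, whereas the paper's route does not need closedness for this lemma), converting $\xi_{n}\,\mathbb{\hat{E}}_{t}[\cdot]$ into $\mathbb{\hat{E}}_{t}[\xi_{n}\,\cdot\,]$ via property (iv), and killing the error term through $\mathbb{\hat{E}}[\xi_{n}X^{-}]\downarrow 0$ by Theorem \ref{the2.10}(c). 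What your route buys is self-containedness (no appeal to the external localization theorem) at the cost of length and of the integrability checks on $\xi_{n}\Delta$ and $\xi_{n}X^{-}$, which you correctly flag and which do go through. One minor point of hygiene: in Step 2 the inequality $X_{n}\geq X\geq 0$ on $K$ holds only outside a polar set, so the hypothesis of your Step 1 should be read as ``$X\geq 0$ q.s.\ on $K$''; your argument is unaffected, since all it uses is $I_{K}X^{-}=0$ q.s.
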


\begin{proof}
There exists a sequence $\{X_{n}\}_{n=1}^{\infty}\subset L_{G}^{1}(\Omega)$
such that $X_{n}\downarrow X$. Noting that $X_{n}^{+}I_{K}=X_{n}I_{K}$, then,
by Theorem \ref{the2.11}, we get $\mathbb{\hat{E}}_{t}[X_{n}^{+}%
]I_{K}=\mathbb{\hat{E}}_{t}[X_{n}]I_{K}$ q.s.. By Definition \ref{de1}, we get
$\mathbb{\hat{E}}_{t}[X^{+}]I_{K}=\mathbb{\hat{E}}_{t}[X]I_{K}$ q.s. by taking
$n\rightarrow \infty$.
\end{proof}

\begin{lemma}
\label{lem6}Let $X\in L_{G}^{1^{\ast}}(\Omega)$, $\{X_{n}\}_{n=1}^{\infty
}\subset L_{G}^{1^{\ast}}(\Omega)$ be such that $X_{n}\uparrow X$ q.s.. Then
$\mathbb{\hat{E}}_{t}[X]=\lim_{n\rightarrow \infty}\mathbb{\hat{E}}_{t}[X_{n}]$ q.s..
\end{lemma}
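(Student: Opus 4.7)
The plan is to prove the reverse inequality by first establishing the integral identity $\mathbb{\hat{E}}[\eta]=\mathbb{\hat{E}}[\mathbb{\hat{E}}_t[X]]$ and then upgrading this to a q.s.\ identity. By Proposition \ref{pro3}(2), $\mathbb{\hat{E}}_t[X_n]\leq\mathbb{\hat{E}}_t[X]$ q.s., so $\eta:=\lim_n\mathbb{\hat{E}}_t[X_n]$ exists q.s.\ and $\eta\leq\mathbb{\hat{E}}_t[X]$ q.s.; only the opposite direction is nontrivial.

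A preliminary ``tower identity'' for $L_G^{1^{\ast}}$ will be used throughout: for every $Y\in L_G^{1^{\ast}}(\Omega)$, $\mathbb{\hat{E}}[\mathbb{\hat{E}}_t[Y]]=\mathbb{\hat{E}}[Y]$. Choosing $W_k\in L_G^1$ with $W_k\downarrow Y$, property (v) of Section~2 gives $\mathbb{\hat{E}}[\mathbb{\hat{E}}_t[W_k]]=\mathbb{\hat{E}}[W_k]$, and two applications of Theorem \ref{the2.10}(c)---to the decreasing sequences $\mathbb{\hat{E}}_t[W_k]\in L_G^1(\Omega_t)$ and $W_k\in L_G^1(\Omega)$---transfer the identity to the limit.

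Next I apply the upward monotone convergence theorem (Theorem \ref{the2.10}(a)) twice. First, $X_n\uparrow X$ yields $\mathbb{\hat{E}}[X_n]\uparrow\mathbb{\hat{E}}[X]$, since $X_1\in\mathbb{L}^1$ gives $-\mathbb{\hat{E}}[-X_1]\geq-\mathbb{\hat{E}}[|X_1|]>-\infty$. Second, $\mathbb{\hat{E}}_t[X_n]\uparrow\eta$ should give $\mathbb{\hat{E}}[\mathbb{\hat{E}}_t[X_n]]\uparrow\mathbb{\hat{E}}[\eta]$, provided $-\mathbb{\hat{E}}[-\mathbb{\hat{E}}_t[X_1]]>-\infty$. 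I verify this hypothesis by taking $Y_m\in L_G^1$ with $Y_m\downarrow X_1$ and observing that sub-additivity on $L_G^1$ gives $-\mathbb{\hat{E}}_t[Y_m]\leq\mathbb{\hat{E}}_t[-Y_m]$; applying $\mathbb{\hat{E}}$ with the $L_G^1$-tower yields $\mathbb{\hat{E}}[-\mathbb{\hat{E}}_t[Y_m]]\leq\mathbb{\hat{E}}[-Y_m]$; and two further applications of Theorem \ref{the2.10}(a), to $-Y_m\uparrow -X_1$ and to $-\mathbb{\hat{E}}_t[Y_m]\uparrow -\mathbb{\hat{E}}_t[X_1]$ (both MCT hypotheses verified from $Y_1\in L_G^1$ and the $L_G^1$-tower), combine to give $\mathbb{\hat{E}}[-\mathbb{\hat{E}}_t[X_1]]\leq\mathbb{\hat{E}}[-X_1]<\infty$. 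Combining with the $L_G^{1^{\ast}}$-tower applied to each $X_n$ and to $X$,
\[
\mathbb{\hat{E}}[\eta]=\lim_n\mathbb{\hat{E}}[\mathbb{\hat{E}}_t[X_n]]=\lim_n\mathbb{\hat{E}}[X_n]=\mathbb{\hat{E}}[X]=\mathbb{\hat{E}}[\mathbb{\hat{E}}_t[X]].
\]

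Finally I upgrade this integral equality to a q.s.\ identity by contradiction. The above bounds in particular force both $\eta$ and $\mathbb{\hat{E}}_t[X]$ to be q.s.\ finite. Suppose $c(\{\eta<\mathbb{\hat{E}}_t[X]\})>0$. A case split on finiteness (in the style of Cases~1 and 2 in the proof of Proposition \ref{pro3}(9)) together with the inner regularity of $c$ (Proposition \ref{pro2.8}(5)) selects a closed $\mathcal{F}_t$-measurable set $K$ with $c(K)>0$ and constants $\delta,M>0$ such that $|\eta|,|\mathbb{\hat{E}}_t[X]|\leq M$ on $K$ and $\mathbb{\hat{E}}_t[X]\geq\eta+\delta$ on $K$. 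After a shift making the relevant variables non-negative and bounded, Proposition \ref{pro4} produces the strict inequality $\mathbb{\hat{E}}[(\mathbb{\hat{E}}_t[X]+M)I_K]>\mathbb{\hat{E}}[(\eta+M)I_K]$. Combining this with a localized tower identity on the partition $\{K,K^c\}$---obtained by extending Theorem \ref{the2.11}(2) from $L_G^1$ to $L_G^{1^{\ast}}$ via downward approximation and Theorem \ref{the2.10}(c), applied at each $X_n$ and then passed through the limit $n\to\infty$---I obtain $\mathbb{\hat{E}}[\mathbb{\hat{E}}_t[X]]>\mathbb{\hat{E}}[\eta]$, contradicting the integral identity. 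This localization step---extending Theorem \ref{the2.11}(2) to $L_G^{1^{\ast}}$ while carrying the strict $\delta$-gap through the limits without losing it in sublinearity---is the main technical obstacle.
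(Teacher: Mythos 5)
Your first step---the global identity $\mathbb{\hat{E}}[\eta]=\mathbb{\hat{E}}[\mathbb{\hat{E}}_{t}[X]]$ together with the integrability checks that justify the two applications of monotone convergence---is correct. But the final step does not close, and the obstacle you flag at the end is not surmountable along the route you propose. From $\mathbb{\hat{E}}[(\mathbb{\hat{E}}_{t}[X]+M)I_{K}]>\mathbb{\hat{E}}[(\eta+M)I_{K}]$ one cannot conclude $\mathbb{\hat{E}}[\mathbb{\hat{E}}_{t}[X]]>\mathbb{\hat{E}}[\eta]$: a sublinear expectation is not additive over the partition $\{K,K^{c}\}$, and Theorem \ref{the2.11}(2), in any extension, only replaces $\xi_{i}$ by $\mathbb{\hat{E}}_{t}[\xi_{i}]$ \emph{inside} a single expectation; it never decomposes $\mathbb{\hat{E}}[\cdot]$ into contributions from $K$ and $K^{c}$. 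Concretely, take $\mathcal{P}=\{P_{1},P_{2}\}$ with $P_{1}(K)>0=P_{2}(K)$, a bounded $\eta\geq 0$ with $E_{P_{2}}[\eta]>E_{P_{1}}[\eta]+\delta P_{1}(K)$, and $\zeta=\eta+\delta I_{K}$: then $\mathbb{\hat{E}}[\zeta I_{K}]>\mathbb{\hat{E}}[\eta I_{K}]$ (exactly the conclusion of Proposition \ref{pro4}), yet $\mathbb{\hat{E}}[\zeta]=\mathbb{\hat{E}}[\eta]=E_{P_{2}}[\eta]$. So a strict inequality that lives only on $K$ is perfectly compatible with equality of the global expectations, and your Step 1, though correct, is a detour that cannot feed the contradiction.

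The paper's proof avoids this by localizing the \emph{identity itself} to $K$ rather than the inequality. Having chosen $k$ and a compact $K\in\mathcal{F}_{t}$ with $K\subset\{\eta+\frac{1}{k}<\mathbb{\hat{E}}_{t}[X]\}\cap\{|\eta|\leq k\}\cap\{X_{1}\geq-k\}$ and $c(K)>0$, it uses $I_{K}\in\mathcal{L}_{G}^{1^{\ast}}(\Omega_{t})$, Lemma \ref{le3.1} and parts (3), (5), (6) of Proposition \ref{pro3} to pull $I_{K}$ through the conditional expectation, getting $\mathbb{\hat{E}}[(X_{n}+k)I_{K}]=\mathbb{\hat{E}}[(\mathbb{\hat{E}}_{t}[X_{n}]+k)I_{K}]$ for each $n$, and then passes to the limit on both sides (all terms are nonnegative on $K$ by the choice $X_{1}\geq-k$) to obtain $\mathbb{\hat{E}}[(X+k)I_{K}]=\mathbb{\hat{E}}[(\eta+k)I_{K}]$. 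Since also $\mathbb{\hat{E}}[(X+k)I_{K}]=\mathbb{\hat{E}}[(\mathbb{\hat{E}}_{t}[X]+k)I_{K}]\geq\mathbb{\hat{E}}[(\eta+\frac{1}{k}+k)I_{K}]$, Proposition \ref{pro4} applied to the bounded nonnegative variable $(\eta+k)I_{K}$ contradicts a statement in which \emph{both} sides are expectations of $K$-supported bounded variables. If you want to keep your structure, replace the global identity of Step 1 by this $K$-localized one; the rest of your outline (choice of $K$, use of Proposition \ref{pro4}) then goes through.
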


\begin{proof}
From monotonicity of $\mathbb{\hat{E}}_{t}$, we only need to prove that
$\eta:=\lim_{n\rightarrow \infty}\mathbb{\hat{E}}_{t}[X_{n}]\geq \mathbb{\hat
{E}}_{t}[X]$ q.s.. Otherwise, $c(\{ \eta<\mathbb{\hat{E}}_{t}[X]\})>0$. Since
$|\mathbb{\hat{E}}_{t}[X]|+|\mathbb{\hat{E}}_{t}[X_{1}]|<\infty$ q.s., we have%
\[
\{ \eta+\frac{1}{k}<\mathbb{\hat{E}}_{t}[X]\} \cap \{|\eta|\leq k\} \cap
\{X_{1}\geq-k\} \uparrow \{ \eta<\mathbb{\hat{E}}_{t}[X]\} \text{ q.s..}%
\]
Thus by (3) and (5) in Proposition \ref{pro2.8}, we can choose a constant
$k>0$ and a compact set $K\in \mathcal{F}_{t}$ such that $K\subset \{ \eta
+\frac{1}{k}<\mathbb{\hat{E}}_{t}[X]\} \cap \{|\eta|\leq k\} \cap \{X_{1}%
\geq-k\}$ and $c(K)>0$. Noting that $(X_{n}+k)I_{K}\uparrow(X+k)I_{K}$, then,
by Lemma \ref{le3.1} and (3), (5) and (6) in Proposition \ref{pro3}, we get%
\begin{align*}
\mathbb{\hat{E}}[(X+k)I_{K}]  &  =\lim_{n\rightarrow \infty}\mathbb{\hat{E}%
}[(X_{n}+k)I_{K}]=\lim_{n\rightarrow \infty}\mathbb{\hat{E}}[\mathbb{\hat{E}%
}_{t}[(X_{n}+k)^{+}I_{K}]]\\
&  =\lim_{n\rightarrow \infty}\mathbb{\hat{E}}[\mathbb{\hat{E}}_{t}%
[(X_{n}+k)^{+}]I_{K}]=\lim_{n\rightarrow \infty}\mathbb{\hat{E}}[\mathbb{\hat
{E}}_{t}[X_{n}+k]I_{K}]\\
&  =\lim_{n\rightarrow \infty}\mathbb{\hat{E}}[(\mathbb{\hat{E}}_{t}%
[X_{n}]+k)I_{K}]=\mathbb{\hat{E}}[(\mathbb{\eta}+k)I_{K}].
\end{align*}

On the other hand, by $\eta \leq X$ we get $\mathbb{\hat{E}}[(X+k)I_{K}%
]=\mathbb{\hat{E}}[(\mathbb{\hat{E}}_{t}[X]+k)I_{K}]\geq \mathbb{\hat{E}}%
[(\eta+\frac{1}{k}+k)I_{K}]$. Thus we get $\mathbb{\hat{E}}[(\mathbb{\eta
}+k)I_{K}]\geq \mathbb{\hat{E}}[(\eta+\frac{1}{k}+k)I_{K}]$, which induces a
contradiction by Proposition \ref{pro4}. The proof is complete.
\end{proof}

\begin{proposition}
\label{pro6} For a given $X\in \mathcal{L}_{G}^{1_{\ast}^{\ast}}(\Omega)$, let
$\{X_{n}\}_{n=1}^{\infty}$ and $\{ \tilde{X}_{n}\}_{n=1}^{\infty}$ be two
sequence in $L_{G}^{1^{\ast}}(\Omega)$ such that $X_{n}\uparrow X$ and
$\tilde{X}_{n}\uparrow X$ q.s.. Then $\lim_{n\rightarrow \infty}\mathbb{\hat
{E}}_{t}[X_{n}]=\lim_{n\rightarrow \infty}\mathbb{\hat{E}}_{t}[\tilde{X}_{n}]$ q.s..
\end{proposition}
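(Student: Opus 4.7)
The plan is to prove the proposition by exploiting Lemma \ref{lem6}, which already handles the case where the upper limit lies in $L_G^{1^\ast}(\Omega)$, by sandwiching each $X_n$ between elements of $L_G^{1^\ast}(\Omega)$ that converge to it. By the symmetric roles of $\{X_n\}$ and $\{\tilde{X}_n\}$, it suffices to show
\[
\lim_{n\to\infty}\mathbb{\hat{E}}_{t}[X_{n}]\leq \lim_{m\to\infty}\mathbb{\hat{E}}_{t}[\tilde{X}_{m}]\quad\text{q.s.}
\]

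First I would fix $n$ and consider the auxiliary sequence $Y_{m}:=X_{n}\wedge\tilde{X}_{m}$. The idea is that $Y_m$ stays dominated by $\tilde{X}_m$ (giving one-sided control of the conditional expectation via monotonicity, property (2) of Proposition \ref{pro3}), while it increases to $X_n$ because $\tilde{X}_{m}\uparrow X\geq X_{n}$ forces $X_{n}\wedge\tilde{X}_{m}\uparrow X_{n}\wedge X=X_{n}$ q.s. To apply Lemma \ref{lem6} to the sequence $Y_m\uparrow X_n$, I need both $X_n$ and every $Y_m$ to lie in $L_G^{1^\ast}(\Omega)$. The first is by hypothesis; for the second I would observe that $L_G^{1^\ast}(\Omega)$ is closed under the binary operation $\wedge$: if $A_{k}\downarrow A$ and $B_{k}\downarrow B$ q.s.\ with $A_k,B_k\in L_G^{1}(\Omega)$, then $A_{k}\wedge B_{k}\downarrow A\wedge B$ q.s., and $A_k\wedge B_k\in L_G^{1}(\Omega)$ since $L_G^1(\Omega)$ is stable under Lipschitz operations; finally $|A\wedge B|\leq |A|+|B|$ keeps the limit in $\mathbb{L}^1(\Omega)$.

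With this in hand, Lemma \ref{lem6} gives $\mathbb{\hat{E}}_{t}[Y_{m}]\uparrow \mathbb{\hat{E}}_{t}[X_{n}]$ q.s. as $m\to\infty$, while monotonicity yields $\mathbb{\hat{E}}_{t}[Y_{m}]\leq \mathbb{\hat{E}}_{t}[\tilde{X}_{m}]$ q.s. Combining these,
\[
\mathbb{\hat{E}}_{t}[X_{n}]=\lim_{m\to\infty}\mathbb{\hat{E}}_{t}[Y_{m}]\leq \lim_{m\to\infty}\mathbb{\hat{E}}_{t}[\tilde{X}_{m}]\quad\text{q.s.}
\]
Letting $n\to\infty$ produces the desired inequality, and swapping the roles of $\{X_n\}$ and $\{\tilde X_n\}$ closes the loop.

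The only real obstacle is the stability of $L_G^{1^\ast}(\Omega)$ under $\wedge$, which is a small but essential lemma to isolate; once this is in place the rest is an immediate combination of Lemma \ref{lem6} with monotonicity. No new quasi-sure analysis or capacity estimate beyond what is already available in Propositions \ref{pro2.8} and \ref{pro3} is required.
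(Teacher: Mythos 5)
Your proof is correct and follows essentially the same route as the paper: both arguments hinge on the auxiliary sequence $X_{n}\wedge\tilde{X}_{m}$ together with Lemma \ref{lem6} and monotonicity, the only difference being that the paper fixes $m$ and lets $n\to\infty$ (so the sequence increases to $\tilde{X}_{m}$) while you fix $n$ and let $m\to\infty$ (so it increases to $X_{n}$) — a mirror image of the same argument. Your explicit remark that $L_{G}^{1^{\ast}}(\Omega)$ is stable under $\wedge$ is a detail the paper leaves implicit, and it is verified correctly.
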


\begin{proof}
\ Indeed, for each fixed $m$, we have $X_{n}\wedge \tilde{X}_{m}\uparrow
\tilde{X}_{m}$ q.s. as $n\rightarrow \infty$. It follows from Lemma \ref{lem6}
that
\[
\lim_{n\rightarrow \infty}\mathbb{\hat{E}}_{t}[X_{n}]\geq \lim_{n\rightarrow
\infty}\mathbb{\hat{E}}_{t}[X_{n}\wedge \tilde{X}_{m}]=\mathbb{\hat{E}}%
_{t}[\tilde{X}_{m}]\text{ q.s..}%
\]
Thus $\lim_{n\rightarrow \infty}\mathbb{\hat{E}}_{t}[X_{n}]\geq \lim
_{n\rightarrow \infty}\mathbb{\hat{E}}_{t}[\tilde{X}_{n}]$. Similarly we have
$\lim_{n\rightarrow \infty}\mathbb{\hat{E}}_{t}[\tilde{X}_{n}]\geq
\lim_{n\rightarrow \infty}\mathbb{\hat{E}}_{t}[X_{n}]$.
\end{proof}

We now give the properties of the conditional $G$-expectation.

\begin{proposition}
\label{pro7} We have

\begin{description}
\item[(1)] $\mathbb{\hat{E}}_{0}[X]=\mathbb{\hat{E}}[X]$ for $X\in
\mathcal{L}_{G}^{1_{\ast}^{\ast}}(\Omega)$;

\item[(2)] $X,Y\in \mathcal{L}_{G}^{1_{\ast}^{\ast}}(\Omega)$, $X\leq
Y\Longrightarrow \mathbb{\hat{E}}_{t}[X]\leq \mathbb{\hat{E}}_{t}[Y]$;

\item[(3)] $X\in \mathcal{L}_{G}^{1_{\ast}^{\ast}}(\Omega_{t})$, $Y\in
\mathcal{L}_{G}^{1_{\ast}^{\ast}}(\Omega)\Longrightarrow \mathbb{\hat{E}}%
_{t}[X+Y]=X+\mathbb{\hat{E}}_{t}[Y]$;

\item[(4)] $X,Y\in \mathcal{L}_{G}^{1_{\ast}^{\ast}}(\Omega)\Longrightarrow
\mathbb{\hat{E}}_{t}[X+Y]\leq \mathbb{\hat{E}}_{t}[X]+\mathbb{\hat{E}}_{t}[Y]$;

\item[(5)] $X\in \mathcal{L}_{G}^{1_{\ast}^{\ast}}(\Omega_{t})$, $X\geq0$,
$Y\in \mathcal{L}_{G}^{1_{\ast}^{\ast}}(\Omega)$, $Y\geq0\Longrightarrow
XY\in \mathcal{L}_{G}^{1_{\ast}^{\ast}}(\Omega)$ and $\mathbb{\hat{E}}%
_{t}[XY]=X\mathbb{\hat{E}}_{t}[Y]$;

\item[(6)] $X\in \mathcal{L}_{G}^{1_{\ast}^{\ast}}(\Omega)\Longrightarrow
\mathbb{\hat{E}}_{t}[X]\in \mathcal{L}_{G}^{1_{\ast}^{\ast}}(\Omega_{t})$ and
$\mathbb{\hat{E}}_{s}[\mathbb{\hat{E}}_{t}[X]]=\mathbb{\hat{E}}_{s\wedge
t}[X]$;

\item[(7)] $X_{n}\in \mathcal{L}_{G}^{1_{\ast}^{\ast}}(\Omega)$, $X_{n}\uparrow
X\Longrightarrow X\in \mathcal{L}_{G}^{1_{\ast}^{\ast}}(\Omega)$ and
$\mathbb{\hat{E}}_{t}[X]=\lim_{n\rightarrow \infty}\mathbb{\hat{E}}_{t}[X_{n}]$ q.s.;

\item[(8)] $\mathbb{\hat{E}}_{t}[X^{+}]\geq(\mathbb{\hat{E}}_{t}[X])^{+}$ for
$X\in \mathcal{L}_{G}^{1_{\ast}^{\ast}}(\Omega)$;

\item[(9)] If $\{X_{n}\}_{n=1}^{\infty}\subset L_{G}^{1^{\ast}}(\Omega)$ such
that $X_{n}\uparrow X$ q.s. and $X\in L^{0}(\Omega_{t})$, then $\mathbb{\hat
{E}}_{t}[X_{n}]\uparrow X$ q.s.;

\item[(10)] $\mathcal{L}_{G}^{1_{\ast}^{\ast}}(\Omega_{t})=\mathcal{L}%
_{G}^{1_{\ast}^{\ast}}(\Omega)\cap L^{0}(\Omega_{t})$.
\end{description}
\end{proposition}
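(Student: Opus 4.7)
The plan is to mirror the proof of Proposition \ref{pro3} step by step, but now replacing the downward defining sequence by the upward one from Definition \ref{de2} and invoking Lemma \ref{lem6}/Proposition \ref{pro6} (consistency of the upward extension) in place of the corresponding fact for $\mathcal{L}_G^{1^*}$. Throughout, whenever a property of $\mathbb{\hat{E}}_t$ is already known on $\mathcal{L}_G^{1^*}(\Omega)$ by Proposition \ref{pro3}, I intend to pass to the limit along approximating sequences $X_n,Y_n \in L_G^{1^*}(\Omega)$ with $X_n\uparrow X$, $Y_n\uparrow Y$ q.s.

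For (1), monotone convergence from above (Theorem \ref{the2.10}(a), applied to each $X_n$ and to $X$ after noting $-\mathbb{\hat{E}}[-X_1]>-\infty$) gives $\mathbb{\hat{E}}[X_n]\uparrow \mathbb{\hat{E}}[X]$, while by definition $\mathbb{\hat{E}}_0[X]=\lim_n \mathbb{\hat{E}}_0[X_n]=\lim_n\mathbb{\hat{E}}[X_n]$. For (2), I would use the fact that $L_G^{1^*}(\Omega)$ is stable under $\wedge$ (take downward $L_G^1$-sequences for $X_n,Y_n$ and intersect); then $X_n\wedge Y_n\in L_G^{1^*}(\Omega)$ with $X_n\wedge Y_n\uparrow X$ q.s.\ and $X_n\wedge Y_n\le Y_n$, and Lemma \ref{lem6}/Proposition \ref{pro6} gives $\mathbb{\hat{E}}_t[X]=\lim_n\mathbb{\hat{E}}_t[X_n\wedge Y_n]\le \lim_n\mathbb{\hat{E}}_t[Y_n]=\mathbb{\hat{E}}_t[Y]$. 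Parts (3)--(5) follow by approximation together with the corresponding statements in Proposition \ref{pro3}(3)--(5); for (5), observe that $X_n Y_n\uparrow XY$ q.s.\ and $X_nY_n\in L_G^{1^*}(\Omega)$, using that $L_G^{1^*}$ is stable under products of nonnegative bounded elements. For (6), Proposition \ref{pro3}(6) gives $\mathbb{\hat{E}}_t[X_n]\in \mathcal{L}_G^{1^*}(\Omega_t)$, and monotonicity forces $\mathbb{\hat{E}}_t[X_n]\uparrow \mathbb{\hat{E}}_t[X]$ q.s., so $\mathbb{\hat{E}}_t[X]\in \mathcal{L}_G^{1_*^*}(\Omega_t)$; then the tower rule follows by applying $\mathbb{\hat{E}}_s$ to the increasing sequence $\mathbb{\hat{E}}_t[X_n]$ and using Proposition \ref{pro3}(6) termwise.

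Part (7) is handled by a diagonal construction exactly parallel to Proposition \ref{pro3}(7): pick $\xi_m^n\in L_G^{1^*}(\Omega)$ with $\xi_m^n\uparrow X_n$ q.s.\ as $m\to\infty$, and set $\eta_n=\vee_{i=1}^n\xi_n^i$, which lies in $L_G^{1^*}(\Omega)$ (since $L_G^{1^*}$ is stable under finite maxima, as the sup of two $L_G^1$-downward sequences lies in $L_G^1$). One checks $\eta_n\le X_n$ q.s.\ and $\eta_n\uparrow X$ q.s., which both shows $X\in\mathcal{L}_G^{1_*^*}(\Omega)$ and gives $\mathbb{\hat{E}}_t[X]=\lim_n\mathbb{\hat{E}}_t[\eta_n]\le \lim_n\mathbb{\hat{E}}_t[X_n]$; the reverse inequality is (2). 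Item (8) is immediate from (2) applied to $X\le X^+$, exactly as in Proposition \ref{pro3}(8).

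The main obstacle is (9), which is the upward analogue of Proposition \ref{pro3}(9) and cannot be reduced to that statement by a simple limit. The strategy is to mimic its proof but with the two inequalities reversed. First I would show $\mathbb{\hat{E}}_t[X_n]\le X$ q.s.: if not, then for suitable $\delta>0$ and a compact set $K\in\mathcal{F}_t$ with $c(K)>0$ one has $X+\delta<\mathbb{\hat{E}}_t[X_n]$ on $K$, and applying the properties of $\mathbb{\hat{E}}_t$ on $\mathcal{L}_G^{1^*}$ together with Proposition \ref{pro4} forces a contradiction of the form $\mathbb{\hat{E}}[(X_n-X)^+ I_K]\ge \delta c(K)>0$ that is incompatible with $X_n\le X$ q.s. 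Then, setting $\eta=\lim_n\mathbb{\hat{E}}_t[X_n]\le X$, I would split $\{\eta<X\}$ into the cases $\{X=+\infty\}$ and $\{|X|<\infty\}$, localize on compact $K\in\mathcal{F}_t$ using Proposition \ref{pro2.8}(3),(5), and apply Theorem \ref{the2.11}, Proposition \ref{pro3}(5),(6) and Proposition \ref{pro4} in the same pattern as the proof of Proposition \ref{pro3}(9) to derive a contradictory strict inequality. The hardest part is controlling the signs and truncations in the two cases (analogous to Cases 1 and 2 in Proposition \ref{pro3}(9)), because approximating objects need to be brought into $\mathcal{L}_G^{1^*}$ so that the tools of Section 3.1 apply. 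Finally, (10) follows from (9) just as in Proposition \ref{pro3}(10): if $X\in \mathcal{L}_G^{1_*^*}(\Omega)\cap L^0(\Omega_t)$ with $X_n\uparrow X$, $X_n\in L_G^{1^*}(\Omega)$, then $\mathbb{\hat{E}}_t[X_n]\in \mathcal{L}_G^{1^*}(\Omega_t)$ is an increasing sequence with q.s.\ limit $X$, exhibiting $X\in\mathcal{L}_G^{1_*^*}(\Omega_t)$, and the reverse inclusion is trivial.
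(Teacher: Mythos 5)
Your overall strategy coincides with the paper's: the authors dispose of (1)--(8) and (10) with the single sentence ``similar to the proof of Proposition \ref{pro3}'' and devote the entire written proof to (9), and your treatment of (1)--(8) and (10) (dualizing $\vee$ to $\wedge$ in (2), the diagonal sequence $\eta_{n}=\vee_{i=1}^{n}\xi_{n}^{i}$ in (7), and so on) is exactly the intended mirroring. Your first half of (9) --- showing $\mathbb{\hat{E}}_{t}[X_{n}]\leq X$ by localizing on a compact $K\subset\{X+\delta<\mathbb{\hat{E}}_{t}[X_{n}]\}$ and deriving $\mathbb{\hat{E}}[(X_{n}-X)^{+}I_{K}]\geq\delta c(K)>0$ --- is verbatim the paper's argument.

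The one place where your plan diverges, and where the difficulty you yourself flag (``controlling the signs and truncations in the two cases'') is a real obstruction, is the second half of (9). You propose to mirror the two-case split of Proposition \ref{pro3}(9), with $\{X=+\infty\}$ playing the role of $\{X=-\infty\}$. That mirror does not go through symmetrically: in Case 1 of Proposition \ref{pro3}(9) the contradiction comes from an \emph{upper} bound $\mathbb{\hat{E}}[(X_{n}+M+1)I_{K}]\leq\mathbb{\hat{E}}[(X_{n}+M+1)^{+}I_{K}]\downarrow 0$, and the analogous step for $X=+\infty$ would require an upper bound on $\mathbb{\hat{E}}[(X_{n}-M)^{+}I_{K}]$ in terms of $\eta$, which the only available inequality $\mathbb{\hat{E}}_{t}[Z^{+}]\geq(\mathbb{\hat{E}}_{t}[Z])^{+}$ gives in the wrong direction (sublinear expectations are not symmetric under negation). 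The paper avoids the case split altogether: since $\eta<X$ already forces $\eta<+\infty$, and $\eta\geq\mathbb{\hat{E}}_{t}[X_{1}]>-\infty$ q.s., it suffices to localize on a compact
\[
K\subset\{\eta+\tfrac{1}{k}<X\}\cap\{|\eta|<k\}\cap\{X_{1}>-k\},
\]
i.e.\ to truncate $\eta$ and $X_{1}$ rather than $X$. On such a $K$ one has $(X_{n}+k)I_{K}=(X_{n}+k)^{+}I_{K}$, so Lemma \ref{le3.1} together with Proposition \ref{pro3}(3),(5),(6) yields the exact identity $\mathbb{\hat{E}}[(X_{n}+k)I_{K}]=\mathbb{\hat{E}}[(\mathbb{\hat{E}}_{t}[X_{n}]+k)I_{K}]\uparrow\mathbb{\hat{E}}[(\eta+k)I_{K}]$, while monotone convergence gives $\mathbb{\hat{E}}[(X_{n}+k)I_{K}]\uparrow\mathbb{\hat{E}}[(X+k)I_{K}]\geq\mathbb{\hat{E}}[(\eta+\tfrac{1}{k}+k)I_{K}]$; since $0\leq(\eta+k)I_{K}\leq 2k$, Proposition \ref{pro4} produces the contradiction. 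So your outline needs this one adjustment --- a single case, truncating $\eta$ instead of $X$, with Lemma \ref{le3.1} supplying the key identity --- after which it becomes the paper's proof.
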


\begin{proof}
Similar to the proof of Proposition \ref{pro3}, we can get (1)-(8) and (10).
We only prove (9). For each fixed $n$, we first prove that $\mathbb{\hat{E}%
}_{t}[X_{n}]\leq X$ q.s.. Otherwise\ we have $c(\{X<\mathbb{\hat{E}}_{t}%
[X_{n}]\})>0$. Since
\[
\{X+\frac{1}{k}<\mathbb{\hat{E}}_{t}[X_{n}]\} \uparrow \{X<\mathbb{\hat{E}}%
_{t}[X_{n}]\} \  \text{as }k\rightarrow \infty \text{,}\
\]
by (3) and (5) in Proposition \ref{pro2.8}, we can choose a constant
$\delta>0$ and a compact set $K\in \mathcal{F}_{t}$ such that $K\subset
\{X+\delta<\mathbb{\hat{E}}_{t}[X_{n}]\}$ and $c(K)>0$. It is easy to verify
that $I_{K}\in \mathcal{L}_{G}^{1^{\ast}}(\Omega_{t})$ and $-\mathbb{\hat{E}%
}_{t}[X_{n}]\in \mathcal{L}_{G}^{1_{\ast}^{\ast}}(\Omega)$. By (1), (3), (5),
(6) and (8), we have%
\begin{align*}
\mathbb{\hat{E}}[(X_{n}-X)^{+}I_{K}]  &  \geq \mathbb{\hat{E}}[(X_{n}%
+\delta-\mathbb{\hat{E}}_{t}[X_{n}])^{+}I_{K}]\\
&  =\mathbb{\hat{E}}[\mathbb{\hat{E}}_{t}[(X_{n}+\delta-\mathbb{\hat{E}}%
_{t}[X_{n}])^{+}]I_{K}]\\
&  \geq \mathbb{\hat{E}}[(\mathbb{\hat{E}}_{t}[X_{n}]+\delta-\mathbb{\hat{E}%
}_{t}[X_{n}])^{+}I_{K}]\\
&  =\mathbb{\hat{E}}[\delta I_{K}]=\delta c(K)>0,
\end{align*}
which contradicts to $X\geq X_{n}$ q.s.. Thus $\mathbb{\hat{E}}_{t}[X_{n}]\leq
X$ q.s. for each $n\geq1$.

Let $\eta=\lim_{n\rightarrow \infty}\mathbb{\hat{E}}_{t}[X_{n}]\leq X$ q.s..
Now we show that $\eta=X$ q.s.. Otherwise $c(\{ \eta<X\})>0$. Obviously
$c(\{X=-\infty \})=0$. It is easy to check that
\[
\{ \eta+\frac{1}{k}<X\} \cap \{|\eta|<k\} \cap \{X_{1}>-k\} \uparrow \{ \eta<X\}
\text{ as }k\rightarrow \infty \text{.}%
\]
By (3) and (5) in Proposition \ref{pro2.8}, we can choose a constant $k>0$ and
a compact set $K\in \mathcal{F}_{t}$ such that $K\subset \{ \eta+\frac{1}{k}<X\}
\cap \{|\eta|<k\} \cap \{X_{1}>-k\}$ and $c(K)>0$. Noting that $(X_{n}%
+k)I_{K}\uparrow(X+k)I_{K}$ q.s., then we have
\[
\mathbb{\hat{E}}[(X_{n}+k)I_{K}]\uparrow \mathbb{\hat{E}}[(X+k)I_{K}%
]\geq \mathbb{\hat{E}}[(\eta+\frac{1}{k}+k)I_{K}].
\]

On the other hand, by Lemma \ref{le3.1},
\[
\mathbb{\hat{E}}[(X_{n}+k)I_{K}]=\mathbb{\hat{E}}[\mathbb{\hat{E}}_{t}%
[(X_{n}+k)^{+}]I_{K}]=\mathbb{\hat{E}}[(\mathbb{\hat{E}}_{t}[X_{n}%
]+k)I_{K}]\uparrow \mathbb{\hat{E}}[(\eta+k)I_{K}].
\]
which implies a contradiction by Proposition \ref{pro4}.
\end{proof}

\subsection{Extension under the norm $\mathbb{L}^{1}$}

We set%
\begin{align*}
\bar{L}_{G}^{1_{\ast}^{\ast}}(\Omega)  &  =\{X\in \mathbb{L}^{1}(\Omega
):\exists X_{n}\in L_{G}^{1_{\ast}^{\ast}}(\Omega)\text{ such that
}\mathbb{\hat{E}}[|X_{n}-X|]\rightarrow0\},\\
\bar{L}_{G}^{1_{\ast}^{\ast}}(\Omega_{t})  &  =\{X\in \mathbb{L}^{1}(\Omega
_{t}):\exists X_{n}\in L_{G}^{1_{\ast}^{\ast}}(\Omega_{t})\text{ such that
}\mathbb{\hat{E}}[|X_{n}-X|]\rightarrow0\}.
\end{align*}
In this subsection, we extend the conditional $G$-expectation to $\bar{L}%
_{G}^{1_{\ast}^{\ast}}(\Omega)$. For this we need the following representation
theorem. In the following, we suppose that $\mathcal{P}=\mathcal{P}_{\max}$.

\begin{proposition}
\label{pro3.2} For each $X\in L_{G}^{1_{\ast}^{\ast}}(\Omega)$, we have, for
each $P\in \mathcal{P}$,%
\[
\mathbb{\hat{E}}_{t}[X]=\underset{Q\in \mathcal{P}(t,P)}{ess\sup}^{P}%
E_{Q}[X|\mathcal{F}_{t}]\text{ \  \ }P\text{-a.s.}.
\]

\end{proposition}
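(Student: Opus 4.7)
The plan is to prove the representation in two stages: first for $X \in L_G^{1^{\ast}}(\Omega)$ and then, by a monotone-limit bootstrap, for $X \in L_G^{1_{\ast}^{\ast}}(\Omega)$. Throughout, fix $P \in \mathcal{P}$ and write $\eta(X) := \underset{Q\in \mathcal{P}(t,P)}{\mathrm{ess\,sup}}^{P} E_Q[X|\mathcal{F}_t]$.

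\emph{Stage 1 ($X \in L_G^{1^{\ast}}(\Omega)$).} Pick $\{X_n\}_{n=1}^{\infty} \subset L_G^{1}(\Omega)$ with $X_n \downarrow X$ q.s., so that by Definition~\ref{de1} we have $\mathbb{\hat{E}}_t[X_n] \downarrow \mathbb{\hat{E}}_t[X]$ q.s. The easy direction $\eta(X) \le \mathbb{\hat{E}}_t[X]$ $P$-a.s.\ is obtained by starting from $E_Q[X_n|\mathcal{F}_t] \le \mathbb{\hat{E}}_t[X_n]$ (Theorem~\ref{the2.15} applied to $X_n \in L_G^{1}(\Omega)$), passing to the limit on the left by dominated convergence under $Q$ (valid since $X \le X_n \le X_1$ and $|X_1|+|X| \in L^1(Q)$) and on the right by Definition~\ref{de1}, and then taking $\mathrm{ess\,sup}$ over $Q$.

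For the reverse inequality I would establish the integral identity $E_P[\mathbb{\hat{E}}_t[X]] = E_P[\eta(X)]$; coupled with $\mathbb{\hat{E}}_t[X] \ge \eta(X)$ $P$-a.s.\ and the $P$-integrability of both sides (bounded below by $E_P[X|\mathcal{F}_t]$ and above by $\mathbb{\hat{E}}_t[X_1] \in L_G^{1}(\Omega_t)$), this forces equality $P$-a.s. Using dominated convergence together with Lemma~\ref{le2.24}(3) and Theorem~\ref{the2.15},
\[ E_P[\mathbb{\hat{E}}_t[X]] = \lim_{n\to\infty} E_P[\mathbb{\hat{E}}_t[X_n]] = \lim_{n\to\infty} \sup_{Q\in \mathcal{P}(t,P)} E_Q[X_n], \]
while Lemma~\ref{le2.24}(3) directly yields $E_P[\eta(X)] = \sup_{Q\in\mathcal{P}(t,P)} E_Q[X]$. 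The crux thus reduces to the interchange $\lim_n \sup_Q E_Q[X_n] = \sup_Q E_Q[X]$. The bound $\ge$ is free from $X_n \ge X$. For $\le$, I would use weak compactness of $\mathcal{P}(t,P)$ (Lemma~\ref{le2.24}(1)) together with weak continuity of $Q \mapsto E_Q[X_n]$ on $\mathcal{P}$ for $X_n \in L_G^{1}(\Omega)$ (Theorem~\ref{the2.10}(b)) to select maximizers $Q_n \in \mathcal{P}(t,P)$ and a weak subsequential limit $Q_{n_k} \to Q^{\ast} \in \mathcal{P}(t,P)$; then for each fixed $m$ and all large $k$, $E_{Q_{n_k}}[X_{n_k}] \le E_{Q_{n_k}}[X_m] \to E_{Q^{\ast}}[X_m]$, and $E_{Q^{\ast}}[X_m] \downarrow E_{Q^{\ast}}[X] \le \sup_Q E_Q[X]$ as $m \to \infty$ by dominated convergence.

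\emph{Stage 2 ($X \in L_G^{1_{\ast}^{\ast}}(\Omega)$).} Pick $\{X_n\}_{n=1}^{\infty} \subset L_G^{1^{\ast}}(\Omega)$ with $X_n \uparrow X$ q.s., so $\mathbb{\hat{E}}_t[X_n] \uparrow \mathbb{\hat{E}}_t[X]$ q.s.\ by Definition~\ref{de2}. Stage~1 applied to each $X_n$ gives $\mathbb{\hat{E}}_t[X_n] = \eta(X_n) \le \eta(X)$ $P$-a.s., and letting $n \to \infty$ produces $\mathbb{\hat{E}}_t[X] \le \eta(X)$. Conversely, for each $Q \in \mathcal{P}(t,P)$, classical monotone convergence for conditional expectations gives $E_Q[X|\mathcal{F}_t] = \lim_n E_Q[X_n|\mathcal{F}_t] \le \lim_n \mathbb{\hat{E}}_t[X_n] = \mathbb{\hat{E}}_t[X]$; taking $\mathrm{ess\,sup}$ over $Q$ closes the argument. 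The main obstacle is the interchange $\lim_n \sup_Q E_Q[X_n] = \sup_Q E_Q[X]$ in Stage~1; everything else is a routine limit passage built on Lemma~\ref{le2.24} and Theorem~\ref{the2.15}.
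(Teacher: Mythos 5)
Your proposal is correct and follows essentially the same route as the paper: a two-stage bootstrap (first the downward extension $L_{G}^{1^{\ast}}(\Omega)$ via Theorem \ref{the2.15}, Lemma \ref{le2.24} and the reduction to the integral identity $E_{P}[\mathbb{\hat{E}}_{t}[X]]=\sup_{Q}E_{Q}[X]$, then the upward extension), with the crux in both cases being $\lim_{n}\sup_{Q\in\mathcal{P}(t,P)}E_{Q}[X_{n}]=\sup_{Q\in\mathcal{P}(t,P)}E_{Q}[X]$, which the paper delegates to Theorem \ref{the2.10} and you correctly spell out via weak compactness of $\mathcal{P}(t,P)$ and the diagonal argument. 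Your Stage~2 handles the remaining inequality by a direct monotone passage ($\mathbb{\hat{E}}_{t}[X_{n}]=\eta(X_{n})\leq\eta(X)$) rather than the paper's second integral-identity contradiction, but this is only a minor streamlining of the same argument.
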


\begin{proof}
We divide the proof into two part.

Step 1. For each fixed $X\in L_{G}^{1^{\ast}}(\Omega)$, there exists a
sequence $\{X_{n}\}_{n=1}^{\infty}\subset L_{G}^{1}(\Omega)$ such that
$X_{n}\downarrow X$ q.s.. Noting that $\mathbb{\hat{E}}_{t}[X_{n}%
]\downarrow \mathbb{\hat{E}}_{t}[X]$ q.s. and $E_{Q}[X_{n}|\mathcal{F}%
_{t}]\downarrow E_{Q}[X|\mathcal{F}_{t}]$ $P$-a.s., then, by Theorem
\ref{the2.15}, we obtain%
\[
\mathbb{\hat{E}}_{t}[X]\geq \underset{Q\in \mathcal{P}(t,P)}{ess\sup}^{P}%
E_{Q}[X|\mathcal{F}_{t}]\text{ \  \ }P\text{-a.s.}.
\]
Now we prove that $\mathbb{\hat{E}}_{t}[X]=\underset{Q\in \mathcal{P}%
(t,P)}{ess\sup}^{P}E_{Q}[X|\mathcal{F}_{t}]$ \  \ $P$-a.s.. Otherwise,
$E_{P}[\mathbb{\hat{E}}_{t}[X]]>E_{P}[\underset{Q\in \mathcal{P}(t,P)}{ess\sup
}^{P}E_{Q}[X|\mathcal{F}_{t}]]$. By Theorem \ref{the2.15} and Lemma
\ref{le2.24}, we get%
\begin{align*}
E_{P}[\mathbb{\hat{E}}_{t}[X]]  &  =\lim_{n\rightarrow \infty}E_{P}%
[\mathbb{\hat{E}}_{t}[X_{n}]]=\lim_{n\rightarrow \infty}E_{P}[\underset
{Q\in \mathcal{P}(t,P)}{ess\sup}^{P}E_{Q}[X_{n}|\mathcal{F}_{t}]]\\
&  =\lim_{n\rightarrow \infty}\underset{Q\in \mathcal{P}(t,P)}{\sup}E_{Q}[X_{n}]
\end{align*}
and%
\[
E_{P}[\underset{Q\in \mathcal{P}(t,P)}{ess\sup}^{P}E_{Q}[X|\mathcal{F}%
_{t}]]=\underset{Q\in \mathcal{P}(t,P)}{\sup}E_{Q}[X].
\]
Noting that $\mathcal{P}(t,P)$ is weakly compact, $\{X_{n}\}_{n=1}^{\infty
}\subset L_{G}^{1}(\Omega)$ and $X_{n}\downarrow X$ q.s., then, by Theorem
\ref{the2.10}, we can get $\underset{Q\in \mathcal{P}(t,P)}{\sup}E_{Q}%
[X_{n}]\downarrow \underset{Q\in \mathcal{P}(t,P)}{\sup}E_{Q}[X]$, which induces
a contradiction.

Step 2. For each fixed $X\in L_{G}^{1_{\ast}^{\ast}}(\Omega)$, there exists a
sequence $\{X_{n}\}_{n=1}^{\infty}\subset L_{G}^{1^{\ast}}(\Omega)$ such that
$X_{n}\uparrow X$ q.s.. Noting that $\mathbb{\hat{E}}_{t}[X_{n}]\uparrow
\mathbb{\hat{E}}_{t}[X]$ q.s. and $E_{Q}[X_{n}|\mathcal{F}_{t}]\uparrow
E_{Q}[X|\mathcal{F}_{t}]$ $P$-a.s., then, by Step 1, we get $\mathbb{\hat{E}%
}_{t}[X]\geq \underset{Q\in \mathcal{P}(t,P)}{ess\sup}^{P}E_{Q}[X|\mathcal{F}%
_{t}]$ \  \ $P$-a.s.. Now we assert $\mathbb{\hat{E}}_{t}[X]=\underset
{Q\in \mathcal{P}(t,P)}{ess\sup}^{P}E_{Q}[X|\mathcal{F}_{t}]$ \  \ $P$-a.s..
Otherwise, $E_{P}[\mathbb{\hat{E}}_{t}[X]]>E_{P}[\underset{Q\in \mathcal{P}%
(t,P)}{ess\sup}^{P}E_{Q}[X|\mathcal{F}_{t}]]$. Similar to the analysis of Step
1, we can obtain%
\[
\lim_{n\rightarrow \infty}\underset{Q\in \mathcal{P}(t,P)}{\sup}E_{Q}%
[X_{n}]>\underset{Q\in \mathcal{P}(t,P)}{\sup}E_{Q}[X]\text{,}%
\]
which contradicts to $\underset{Q\in \mathcal{P}(t,P)}{\sup}E_{Q}[X_{n}%
]\leq \underset{Q\in \mathcal{P}(t,P)}{\sup}E_{Q}[X]$ for $n\geq1$. The proof is complete.
\end{proof}

\begin{proposition}
\label{pro3.3} For each $X$, $Y\in L_{G}^{1_{\ast}^{\ast}}(\Omega)$, we have%
\[
\mathbb{\hat{E}}[|\mathbb{\hat{E}}_{t}[X]-\mathbb{\hat{E}}_{t}[Y]|]\leq
\mathbb{\hat{E}}[|X-Y|].
\]

\end{proposition}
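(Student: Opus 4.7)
The plan is to use the representation theorem (Proposition \ref{pro3.2}) to reduce the question to the classical $L^1$-contraction property of ordinary conditional expectations under each probability $Q \in \mathcal{P}$, since neither $X-Y$ nor $|X-Y|$ is in general an element of $\mathcal{L}_G^{1_\ast^\ast}(\Omega)$, which blocks a direct appeal to sub-additivity (property (4) of Proposition \ref{pro7}).

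First I would fix an arbitrary $P \in \mathcal{P}$. For every $Q \in \mathcal{P}(t,P)$, the classical conditional-expectation inequality gives
\[
E_Q[X\mid \mathcal{F}_t] \leq E_Q[Y\mid \mathcal{F}_t] + E_Q[|X-Y|\mid \mathcal{F}_t] \leq \hat{\mathbb{E}}_t[Y] + \underset{Q' \in \mathcal{P}(t,P)}{\mathrm{ess\,sup}}{}^{P} E_{Q'}[|X-Y|\mid \mathcal{F}_t],\quad P\text{-a.s.},
\]
where the last bound uses Proposition \ref{pro3.2} applied to $Y$ and the fact that the second term is independent of $Q$. Taking the essential supremum in $Q$ on the left and using Proposition \ref{pro3.2} once more yields
\[
\hat{\mathbb{E}}_t[X] - \hat{\mathbb{E}}_t[Y] \leq \underset{Q' \in \mathcal{P}(t,P)}{\mathrm{ess\,sup}}{}^{P} E_{Q'}[|X-Y|\mid \mathcal{F}_t],\quad P\text{-a.s.},
\]
and by exchanging $X$ and $Y$ the same bound holds for $|\hat{\mathbb{E}}_t[X] - \hat{\mathbb{E}}_t[Y]|$.

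Next I would take $E_P$ on both sides. Since $X,Y \in L_G^{1_\ast^\ast}(\Omega) \subset \mathbb{L}^1(\Omega)$, we have $|X-Y| \in \mathbb{L}^1(\Omega)$ by sub-additivity of $\hat{\mathbb{E}}$, so Lemma \ref{le2.24} (3) applies and gives
\[
E_P\bigl[|\hat{\mathbb{E}}_t[X] - \hat{\mathbb{E}}_t[Y]|\bigr] \leq E_P\Bigl[\underset{Q' \in \mathcal{P}(t,P)}{\mathrm{ess\,sup}}{}^{P} E_{Q'}[|X-Y|\mid \mathcal{F}_t]\Bigr] = \sup_{Q' \in \mathcal{P}(t,P)} E_{Q'}[|X-Y|] \leq \hat{\mathbb{E}}[|X-Y|].
\]
Taking the supremum over $P \in \mathcal{P}$ on the left then yields the desired bound.

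The main obstacle is the step of pulling $\hat{\mathbb{E}}_t[Y]$ outside the essential supremum over $Q$: essential suprema do not commute with sums in general, and the trick is to bound the second summand by its own $Q$-free essential supremum \emph{before} taking $\mathrm{ess\,sup}$ in $Q$, which requires invoking Proposition \ref{pro3.2} twice in succession. Everything else (the classical contractivity under each $Q$, the integrability of $|X-Y|$, and the identity of Lemma \ref{le2.24} (3)) is routine once that rearrangement is carried out.
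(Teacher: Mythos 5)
Your proposal is correct and follows essentially the same route as the paper: both reduce the claim via the representation $\hat{\mathbb{E}}_t[\cdot]=\mathrm{ess\,sup}^P_{Q\in\mathcal{P}(t,P)}E_Q[\cdot\mid\mathcal{F}_t]$ (Proposition \ref{pro3.2}), bound the difference of essential suprema by the essential supremum of the differences, apply the classical contraction under each $Q$, and conclude with Lemma \ref{le2.24}(3) and a supremum over $P$. The only cosmetic difference is that you unfold the inequality $|\mathrm{ess\,sup}_Q a_Q-\mathrm{ess\,sup}_Q b_Q|\leq\mathrm{ess\,sup}_Q|a_Q-b_Q|$ into an explicit add-and-subtract argument, which the paper uses in one step.
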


\begin{proof}
For each $P\in \mathcal{P}$, by Proposition \ref{pro3.2} and Lemma
\ref{le2.24}, we get%
\begin{align*}
E_{P}[|\mathbb{\hat{E}}_{t}[X]-\mathbb{\hat{E}}_{t}[Y]|]  &  =E_{P}%
[|\underset{Q\in \mathcal{P}(t,P)}{ess\sup}^{P}E_{Q}[X|\mathcal{F}%
_{t}]-\underset{Q\in \mathcal{P}(t,P)}{ess\sup}^{P}E_{Q}[Y|\mathcal{F}_{t}]|]\\
&  \leq E_{P}[\underset{Q\in \mathcal{P}(t,P)}{ess\sup}^{P}|E_{Q}%
[X|\mathcal{F}_{t}]-E_{Q}[Y|\mathcal{F}_{t}]|]\\
&  \leq E_{P}[\underset{Q\in \mathcal{P}(t,P)}{ess\sup}^{P}E_{Q}%
[|X-Y||\mathcal{F}_{t}]\\
&  =\underset{Q\in \mathcal{P}(t,P)}{\sup}E_{Q}[|X-Y|]\leq \mathbb{\hat{E}%
}[|X-Y|].
\end{align*}

\end{proof}

Now we give the extension of the conditional $G$-expectation to $\bar{L}%
_{G}^{1_{\ast}^{\ast}}(\Omega)$.

\begin{definition}
\label{de3.4} For each $X\in \bar{L}_{G}^{1_{\ast}^{\ast}}(\Omega)$, there
exists a sequence $\{X_{n}\}_{n=1}^{\infty}\subset L_{G}^{1_{\ast}^{\ast}%
}(\Omega)$ such that $\mathbb{\hat{E}}[|X_{n}-X|]\rightarrow0$. We define%
\[
\mathbb{\hat{E}}_{t}[X]=\mathbb{L}^{1}-\lim_{n\rightarrow \infty}%
\mathbb{\hat{E}}_{t}[X_{n}].
\]

\end{definition}

By Proposition \ref{pro3.3}, it is easy to show that the above definition is
meaningful, $\mathbb{\hat{E}}_{t}[\cdot]:\bar{L}_{G}^{1_{\ast}^{\ast}}%
(\Omega)\rightarrow \bar{L}_{G}^{1_{\ast}^{\ast}}(\Omega_{t})$ and the
following properties.

\begin{proposition}
\label{pro3.5} We have

\begin{description}
\item[(1)] $X$, $Y\in \bar{L}_{G}^{1_{\ast}^{\ast}}(\Omega)$, $X\geq
Y\Longrightarrow \mathbb{\hat{E}}_{t}[X]\geq \mathbb{\hat{E}}_{t}[Y]$;

\item[(2)] $X\in \bar{L}_{G}^{1_{\ast}^{\ast}}(\Omega_{t})$, $Y\in \bar{L}%
_{G}^{1_{\ast}^{\ast}}(\Omega)\Longrightarrow \mathbb{\hat{E}}_{t}%
[X+Y]=X+\mathbb{\hat{E}}_{t}[Y]$;

\item[(3)] $X$, $Y\in \bar{L}_{G}^{1_{\ast}^{\ast}}(\Omega)\Longrightarrow
\mathbb{\hat{E}}_{t}[X+Y]\leq \mathbb{\hat{E}}_{t}[X]+\mathbb{\hat{E}}_{t}[Y]$;

\item[(4)] $X\in \bar{L}_{G}^{1_{\ast}^{\ast}}(\Omega_{t})$ is bounded,
$X\geq0$, $Y\in \bar{L}_{G}^{1_{\ast}^{\ast}}(\Omega)$, $Y\geq0$,
$\lim_{n\rightarrow \infty}\mathbb{\hat{E}}[YI_{\{Y\geq n\}}]=0\Longrightarrow
\mathbb{\hat{E}}_{t}[XY]=X\mathbb{\hat{E}}_{t}[Y]$;

\item[(5)] $X\in \bar{L}_{G}^{1_{\ast}^{\ast}}(\Omega)\Longrightarrow
\mathbb{\hat{E}}_{s}[\mathbb{\hat{E}}_{t}[X]]=\mathbb{\hat{E}}_{s\wedge t}[X]$.
\end{description}
\end{proposition}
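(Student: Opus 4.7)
The plan is to install Definition~\ref{de3.4} by leveraging the $1$-Lipschitz property from Proposition~\ref{pro3.3}, then transfer each claim from Proposition~\ref{pro7} by $\mathbb{L}^{1}$-approximation. First I would verify the definition is meaningful. Given $X\in\bar L_{G}^{1_{\ast}^{\ast}}(\Omega)$ with approximating sequence $\{X_{n}\}\subset L_{G}^{1_{\ast}^{\ast}}(\Omega)$, Proposition~\ref{pro3.3} gives $\mathbb{\hat E}[|\mathbb{\hat E}_{t}[X_{n}]-\mathbb{\hat E}_{t}[X_{m}]|]\leq\mathbb{\hat E}[|X_{n}-X_{m}|]$, so $\{\mathbb{\hat E}_{t}[X_{n}]\}$ is Cauchy in $\mathbb{L}^{1}$; comparing two approximating sequences the same way shows the limit is independent of the choice. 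Each $\mathbb{\hat E}_{t}[X_{n}]$ lies in $\mathcal L_{G}^{1_{\ast}^{\ast}}(\Omega_{t})$ by Proposition~\ref{pro7}(6), and taking $Y=0$ in Proposition~\ref{pro3.3} yields $\mathbb{\hat E}[|\mathbb{\hat E}_{t}[X_{n}]|]\leq\mathbb{\hat E}[|X_{n}|]<\infty$; hence $\mathbb{\hat E}_{t}[X_{n}]\in L_{G}^{1_{\ast}^{\ast}}(\Omega_{t})$ and the $\mathbb{L}^{1}$-limit $\mathbb{\hat E}_{t}[X]$ lies in $\bar L_{G}^{1_{\ast}^{\ast}}(\Omega_{t})$. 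Passing to the limit in Proposition~\ref{pro3.3} shows the contraction inequality survives on $\bar L_{G}^{1_{\ast}^{\ast}}(\Omega)$, and this will be the workhorse below.

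Parts (1), (2), (3), (5) then follow by routine approximation. For (1), given $X\geq Y$ and approximations $X_{n},Y_{n}\in L_{G}^{1_{\ast}^{\ast}}(\Omega)$, I replace $X_{n}$ by $X_{n}\vee Y_{n}$, which still belongs to $L_{G}^{1_{\ast}^{\ast}}(\Omega)$ (that class is closed under finite maxima, as one checks by descending and ascending both extensions) and still approximates $X\vee Y=X$ in $\mathbb{L}^{1}$ via $|a\vee b-c\vee d|\leq|a-c|+|b-d|$; Proposition~\ref{pro7}(2) gives $\mathbb{\hat E}_{t}[X_{n}\vee Y_{n}]\geq\mathbb{\hat E}_{t}[Y_{n}]$, and $\mathbb{L}^{1}$-convergence preserves the inequality q.s.\ after extracting a q.s.-convergent subsequence via Theorem~\ref{the2.12}. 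For (2) and (3), $L_{G}^{1_{\ast}^{\ast}}$ is closed under addition, so $X_{n}+Y_{n}$ approximates $X+Y$ in $\mathbb{L}^{1}$, and Proposition~\ref{pro7}(3)--(4) applied pointwise followed by the $\mathbb{L}^{1}$-limit gives the identity and the sub-additivity. For (5), Proposition~\ref{pro7}(6) yields $\mathbb{\hat E}_{s}[\mathbb{\hat E}_{t}[X_{n}]]=\mathbb{\hat E}_{s\wedge t}[X_{n}]$; the right side converges in $\mathbb{L}^{1}$ to $\mathbb{\hat E}_{s\wedge t}[X]$ by the contraction, and the left side converges to $\mathbb{\hat E}_{s}[\mathbb{\hat E}_{t}[X]]$ by applying the contraction twice.

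The main obstacle is (4), where multiplication must be reconciled with the uniform integrability hypothesis. My plan is to approximate $X$ by $X_{n}\in L_{G}^{1_{\ast}^{\ast}}(\Omega_{t})$ with $0\leq X_{n}\leq M:=\|X\|_{\infty}$ (truncate by $(X_{n}\vee 0)\wedge M$, using that $L_{G}^{1_{\ast}^{\ast}}$ is closed under truncation by constants), and to approximate $Y$ by $Y_{n}:=Y_{n}^{+}\in L_{G}^{1_{\ast}^{\ast}}(\Omega)$. Proposition~\ref{pro7}(5) then gives $X_{n}Y_{n}\in\mathcal L_{G}^{1_{\ast}^{\ast}}(\Omega)$ and $\mathbb{\hat E}_{t}[X_{n}Y_{n}]=X_{n}\mathbb{\hat E}_{t}[Y_{n}]$. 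I would prove $X_{n}Y_{n}\to XY$ in $\mathbb{L}^{1}$ by the split
\[
|X_{n}Y_{n}-XY|\leq M|Y_{n}-Y|+Y|X_{n}-X|,
\]
bounding the first term by $M\,\mathbb{\hat E}[|Y_{n}-Y|]\to 0$ and the second by $N\,\mathbb{\hat E}[|X_{n}-X|]+2M\,\mathbb{\hat E}[Y I_{\{Y>N\}}]$, where the first piece vanishes as $n\to\infty$ for fixed $N$ and the second becomes arbitrarily small in $N$ by the hypothesis $\lim_{n}\mathbb{\hat E}[Y I_{\{Y\geq n\}}]=0$. This places $XY$ in $\bar L_{G}^{1_{\ast}^{\ast}}(\Omega)$ and, via Proposition~\ref{pro3.3}, gives $\mathbb{\hat E}_{t}[X_{n}Y_{n}]\to\mathbb{\hat E}_{t}[XY]$ in $\mathbb{L}^{1}$. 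Extracting a subsequence along which $X_{n}\to X$ and $\mathbb{\hat E}_{t}[Y_{n}]\to\mathbb{\hat E}_{t}[Y]$ q.s.\ (Theorem~\ref{the2.12}), the product $X_{n}\mathbb{\hat E}_{t}[Y_{n}]$ converges q.s.\ to $X\mathbb{\hat E}_{t}[Y]$; comparing the two limits of the identity $\mathbb{\hat E}_{t}[X_{n}Y_{n}]=X_{n}\mathbb{\hat E}_{t}[Y_{n}]$ yields $\mathbb{\hat E}_{t}[XY]=X\mathbb{\hat E}_{t}[Y]$ q.s., completing the proof.
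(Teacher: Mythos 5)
Your proposal is correct and follows exactly the route the paper intends: the paper omits the proof, stating only that everything follows from the contraction inequality of Proposition~\ref{pro3.3} together with the properties already established in Proposition~\ref{pro7}, and your argument supplies precisely those approximation details (including the only delicate point, the uniform-integrability splitting in part (4), which is handled correctly).
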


\begin{proposition}
\label{ne-pro3.5} For each $X\in \bar{L}_{G}^{1_{\ast}^{\ast}}(\Omega)$, we
have, for each $P\in \mathcal{P}$,%
\[
\mathbb{\hat{E}}_{t}[X]=\underset{Q\in \mathcal{P}(t,P)}{ess\sup}^{P}%
E_{Q}[X|\mathcal{F}_{t}]\text{ \  \ }P\text{-a.s.}.
\]

\end{proposition}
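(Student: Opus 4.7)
The strategy is to approximate $X \in \bar{L}_{G}^{1_{\ast}^{\ast}}(\Omega)$ by a sequence $\{X_n\}_{n=1}^\infty \subset L_{G}^{1_{\ast}^{\ast}}(\Omega)$ with $\mathbb{\hat{E}}[|X_n-X|]\to 0$, apply Proposition \ref{pro3.2} at each level $n$, and then pass to the limit on both sides. By Definition \ref{de3.4}, the left-hand side converges: $\mathbb{\hat{E}}_{t}[X_n] \to \mathbb{\hat{E}}_{t}[X]$ in $\mathbb{L}^1$, which in particular yields $L^1(P)$-convergence for every $P\in\mathcal{P}$, hence a.s.\ convergence along a subsequence.

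For the right-hand side, the plan is to establish, for any fixed $P\in\mathcal{P}$, the contraction estimate
\[
\bigl|\underset{Q\in\mathcal{P}(t,P)}{\mathrm{ess\,sup}}{}^{P} E_Q[X_n|\mathcal{F}_t] - \underset{Q\in\mathcal{P}(t,P)}{\mathrm{ess\,sup}}{}^{P} E_Q[X|\mathcal{F}_t]\bigr| \leq \underset{Q\in\mathcal{P}(t,P)}{\mathrm{ess\,sup}}{}^{P} E_Q[|X_n-X|\,|\mathcal{F}_t],\quad P\text{-a.s.},
\]
using the standard fact that for two families differing by at most $\varepsilon$ pointwise, their ess sups differ by at most $\varepsilon$. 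Taking $E_P$ of both sides and invoking Lemma \ref{le2.24}(3) gives
\[
E_P\bigl[\,|\underset{Q\in\mathcal{P}(t,P)}{\mathrm{ess\,sup}}{}^{P} E_Q[X_n|\mathcal{F}_t] - \underset{Q\in\mathcal{P}(t,P)}{\mathrm{ess\,sup}}{}^{P} E_Q[X|\mathcal{F}_t]|\,\bigr] \leq \underset{Q\in\mathcal{P}(t,P)}{\sup} E_Q[|X_n-X|] \leq \mathbb{\hat{E}}[|X_n-X|] \to 0,
\]
so the right-hand sides also converge in $L^1(P)$, hence $P$-a.s.\ along a subsequence. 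Since Proposition \ref{pro3.2} gives the identity for each $X_n$, passing to the common subsequence yields the claimed equality $P$-a.s.

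The main technical point will be justifying the ess sup contraction inequality above carefully: using Lemma \ref{le2.24}(2), one can pick countable maximizing sequences $\{Q_k^n\}$ and $\{Q_k\}$ in $\mathcal{P}(t,P)$ producing upward approximations to the two ess sups, then bound $|E_{Q_k^n}[X_n|\mathcal{F}_t]-E_{Q_k^n}[X|\mathcal{F}_t]|$ and $|E_{Q_k}[X_n|\mathcal{F}_t]-E_{Q_k}[X|\mathcal{F}_t]|$ by $E_{Q_k^n}[|X_n-X|\,|\mathcal{F}_t]$ and $E_{Q_k}[|X_n-X|\,|\mathcal{F}_t]$ respectively, and take the corresponding ess sup. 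Once this contraction is in hand, the rest is just a matter of passing through the $\mathbb{L}^1$-limit and matching subsequences so that both sides converge $P$-a.s.\ simultaneously to identify the limits.

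In short: the argument reduces the general case to the already-established Proposition \ref{pro3.2} via an $\mathbb{L}^1$-continuity statement for the mapping $X \mapsto \underset{Q\in\mathcal{P}(t,P)}{\mathrm{ess\,sup}}{}^{P} E_Q[X|\mathcal{F}_t]$, with the key ingredient being Lemma \ref{le2.24}(3) which exactly produces $\sup_{Q\in\mathcal{P}(t,P)} E_Q[\cdot] \leq \mathbb{\hat{E}}[\cdot]$.
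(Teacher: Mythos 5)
Your proposal is correct. The paper itself omits the proof, saying only that it parallels the proof of Theorem \ref{the2.15}; that template would run as follows: approximate $X$ by $X_n\in L_{G}^{1_{\ast}^{\ast}}(\Omega)$ in $\mathbb{L}^1$, note that for each fixed $Q\in\mathcal{P}(t,P)$ one has $E_Q[X|\mathcal{F}_t]=\lim_n E_Q[X_n|\mathcal{F}_t]\le\lim_n\mathbb{\hat{E}}_t[X_n]=\mathbb{\hat{E}}_t[X]$ $P$-a.s.\ along a subsequence, so that the essential supremum is dominated by $\mathbb{\hat{E}}_t[X]$, and then rule out strict inequality on a non-null set by comparing $P$-expectations, using Lemma \ref{le2.24}(3) together with $|\sup_Q E_Q[X_n]-\sup_Q E_Q[X]|\le\mathbb{\hat{E}}[|X_n-X|]$. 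Your route replaces that second step by an $L^1(P)$-contraction for the map $X\mapsto \mathrm{ess\,sup}^{P}_{Q\in\mathcal{P}(t,P)} E_Q[X|\mathcal{F}_t]$ and identifies the two sides by matching a.s.-convergent subsequences. Both work, and your contraction inequality is exactly the chain of estimates the paper already carries out inside the proof of Proposition \ref{pro3.3}; the one point worth making explicit is that the $Q$-a.s.\ bound $|E_Q[X_n|\mathcal{F}_t]-E_Q[X|\mathcal{F}_t]|\le E_Q[|X_n-X|\,|\mathcal{F}_t]$ transfers to a $P$-a.s.\ bound because $Q=P$ on $\mathcal{F}_t$ for every $Q\in\mathcal{P}(t,P)$, which is what legitimizes taking the essential supremum with respect to $P$. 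The paper's variant is marginally lighter, needing only convergence of the scalars $\sup_Q E_Q[X_n]$ rather than $L^1(P)$-convergence of the ess sup random variables; yours is marginally cleaner conceptually, presenting the statement as a pure density argument: two $\mathbb{L}^1$-contractive maps that agree on $L_{G}^{1_{\ast}^{\ast}}(\Omega)$ must agree on $\bar{L}_{G}^{1_{\ast}^{\ast}}(\Omega)$.
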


\begin{proof}
We omit the proof which is similar to Theorem \ref{the2.15}.
\end{proof}

It is important to note that $L_{G}^{1^{\ast}}(\Omega)$, $L_{G}^{1_{\ast
}^{\ast}}(\Omega)$ and $\bar{L}_{G}^{1_{\ast}^{\ast}}(\Omega)$ are not linear
spaces. In the following, we consider a linear space in $L_{G}^{1_{\ast}%
^{\ast}}(\Omega)$. We set%
\[
L_{G}^{{\ast}1}(\Omega)=\{X-Y:X,Y\in L_{G}^{1^{\ast}}(\Omega)\}.
\]
It is easy to check that $L_{G}^{1^{\ast}}(\Omega)\subset L_{G}^{{\ast}1}%
(\Omega)\subset L_{G}^{1_{\ast}^{\ast}}(\Omega)$ and $L_{G}^{{\ast}1}(\Omega)$ is a linear space.

\begin{proposition}
If $\xi \in L_{G}^{{\ast}1}(\Omega)$, then $|\xi|\in L_{G}^{{\ast}1}(\Omega)$.
\end{proposition}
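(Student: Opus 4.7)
The plan is to reduce the statement to showing that the downward-extension class $L_G^{1^{\ast}}(\Omega)$ is stable under the lattice operations $\vee$ and $\wedge$. Writing $\xi = X - Y$ with $X, Y \in L_G^{1^{\ast}}(\Omega)$, the identity
\[
|\xi| = |X - Y| = (X \vee Y) - (X \wedge Y)
\]
exhibits $|\xi|$ as a difference of two elements of $L_G^{1^{\ast}}(\Omega)$, so $|\xi| \in L_G^{{\ast}1}(\Omega)$ will follow at once.

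To prove closure of $L_G^{1^{\ast}}(\Omega)$ under $\vee$, I would pick monotone decreasing sequences $X_n \downarrow X$ and $Y_n \downarrow Y$ q.s.\ with $X_n, Y_n \in L_G^1(\Omega)$. Since $L_G^1(\Omega)$ is a vector lattice (one may verify this via the $L_{ip}$-approximation together with the identity $a \vee b = \tfrac12(a+b+|a-b|)$, using that the Lipschitz cylinder functions are closed under the max and that $|X_n \vee Y_n - X \vee Y| \le |X_n - X| + |Y_n - Y|$), we have $X_n \vee Y_n \in L_G^1(\Omega)$. The pointwise monotonicity $X_n \vee Y_n \downarrow X \vee Y$ q.s.\ is immediate, and $X \vee Y \in \mathbb{L}^1(\Omega)$ follows from $|X \vee Y| \le |X| + |Y|$ together with $X, Y \in \mathbb{L}^1(\Omega)$. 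Hence $X \vee Y \in L_G^{1^{\ast}}(\Omega)$. The argument for $X \wedge Y$ is identical (or one can write $X \wedge Y = -((-X) \vee (-Y))$ and use the same reasoning, noting that $-X, -Y$ need not lie in $L_G^{1^{\ast}}(\Omega)$, so the direct approach via $X_n \wedge Y_n \downarrow X \wedge Y$ is cleaner).

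The only subtle point I anticipate is verifying that $L_G^1(\Omega)$ is stable under $\vee$ and $\wedge$; the rest is a short monotone-limit argument plus the trivial bounds on $\mathbb{L}^1$-norms. Everything else is formal. Nothing in this proof requires passing through $\mathcal{L}_G^{1^{\ast}}$ or the weak-compactness machinery, so the argument is short and self-contained within the properties of the $L_G^1$-completion already recalled in Section~2.
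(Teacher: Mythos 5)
Your proof is correct and follows the same route as the paper: the key identity $|X-Y| = X\vee Y - X\wedge Y$ is exactly the paper's argument, and your verification that $L_G^{1^{\ast}}(\Omega)$ is closed under $\vee$ and $\wedge$ simply fills in what the paper dismisses as ``easy to check.''
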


\begin{proof}
For $\xi=X-Y$, where $X$, $Y\in L_{G}^{1^{\ast}}(\Omega)$, it is easy to check
that $|\xi|=|X-Y|=X\vee Y-X\wedge Y$, thus $|\xi|\in L_{G}^{{\ast}1}(\Omega)$.
\end{proof}

We also set%
\[
\bar{L}_{G}^{{\ast}1}(\Omega)=\{X\in \mathbb{L}^{1}(\Omega):\exists
X_{n}\in L_{G}^{{\ast}1}(\Omega)\text{ such that }\mathbb{\hat{E}}%
[|X_{n}-X|]\rightarrow0\}.
\]
It is easy to verify that $\bar{L}_{G}^{{\ast}1}(\Omega)\subset \bar{L}%
_{G}^{1_{\ast}^{\ast}}(\Omega)$ and $\bar{L}_{G}^{{\ast}1}(\Omega)$ is a
vector lattice.

\subsection{Application to optional stopping theorem}

In this subsection, we first present some basic properties  in the extended spaces
$L_{G}^{1^{\ast}}(\Omega)$, $L_{G}^{{\ast}1}(\Omega)$ and $L_{G}^{1_{\ast
}^{\ast}}(\Omega)$.

\begin{proposition}
\label{pro3.6} We have

\begin{description}
\item[(1)] Let $X$ be a bounded upper (resp. lower) semicontinuous function on
$\Omega$. Then $X\in L_{G}^{1^{\ast}}(\Omega)$ (resp. $X\in L_{G}^{{\ast}1}(\Omega)$);

\item[(2)] Let $X\in L_{G}^{1}(\Omega;\mathbb{R}^{n})$ and let $f$ be a
bounded upper (resp. lower) semicontinuous function on $\mathbb{R}^{n}$. Then
$f(X)\in L_{G}^{1^{\ast}}(\Omega)$ (resp. $f(X)\in L_{G}^{{\ast}1}%
(\Omega)$);

\item[(3)] Let $X\in L_{G}^{1}(\Omega;\mathbb{R}^{n})$ and let $f_{i}$,
$i\geq1$, be a sequence of uniformly bounded upper or lower semicontinuous
functions on $\mathbb{R}^{n}$ such that $f_{i}\uparrow f$. Then $f(X)\in
L_{G}^{1_{\ast}^{\ast}}(\Omega)$.
\end{description}
\end{proposition}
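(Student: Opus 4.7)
\textbf{Proof plan for Proposition \ref{pro3.6}.}

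The three parts are attacked by progressively richer regularizations: Moreau–Yosida sup/inf convolution on the path space $\Omega$ for (1), the same on $\mathbb{R}^n$ for (2), and a diagonal combination for (3). Throughout I use Theorem~\ref{the2.9} to certify membership in $L_G^1$ from continuity plus boundedness.

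\textbf{Step 1 (part (1)).} Assume $X$ is upper semicontinuous with $|X|\leq M$. Define
\[
X_n(\omega):=\sup_{\omega'\in\Omega}\bigl\{X(\omega')-n\rho(\omega,\omega')\bigr\}.
\]
A routine check shows $X_n$ is $n$-Lipschitz in $\rho$, satisfies $-M\leq X_n\leq M$, is decreasing in $n$, and $X_n\downarrow X$ pointwise (using upper semicontinuity: for any $\varepsilon>0$, contributions from $\omega'$ with $\rho(\omega,\omega')\geq\delta$ are at most $M-n\delta$, eventually dominated by $X(\omega)+\varepsilon$, while nearby $\omega'$ give at most $X(\omega)+\varepsilon$). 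Since $X_n$ is continuous (hence q.c.) and uniformly bounded, Theorem~\ref{the2.9} gives $X_n\in L_G^1(\Omega)$, so $X\in L_G^{1^{\ast}}(\Omega)$. For lower semicontinuous $X$, apply the argument to $-X$ to get $-X\in L_G^{1^{\ast}}(\Omega)$, and then $X=0-(-X)\in L_G^{\ast 1}(\Omega)$.

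\textbf{Step 2 (part (2)).} For bounded upper semicontinuous $f$ on $\mathbb{R}^n$, set
\[
f_k(x):=\sup_{y\in\mathbb{R}^n}\bigl\{f(y)-k|x-y|\bigr\}.
\]
These $f_k$ are bounded, $k$-Lipschitz and decrease pointwise to $f$ by the same reasoning as in Step~1. So $f_k\in C_{b.Lip}(\mathbb{R}^n)$, and since $X\in L_G^1(\Omega;\mathbb{R}^n)$ can be approximated by cylinder functions $X^m\in L_{ip}(\Omega;\mathbb{R}^n)$, one has $f_k(X^m)\in L_{ip}(\Omega)$ and $f_k(X^m)\to f_k(X)$ in $\mathbb{L}^1$ by the Lipschitz property, giving $f_k(X)\in L_G^1(\Omega)$. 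Pointwise convergence $f_k(X)\downarrow f(X)$ (hence q.s.) yields $f(X)\in L_G^{1^{\ast}}(\Omega)$. The lower semicontinuous case is reduced to the upper case by passing to $-f$, exactly as in Step~1.

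\textbf{Step 3 (part (3)).} Because the $f_i$ are uniformly bounded and increase to $f$, the limit $f$ is bounded and so $f(X)\in\mathbb{L}^1(\Omega)$. If every $f_i$ is upper semicontinuous, part (2) gives $f_i(X)\in L_G^{1^{\ast}}(\Omega)$, and $f_i(X)\uparrow f(X)$ q.s.\ puts $f(X)\in L_G^{1_{\ast}^{\ast}}(\Omega)$ by Definition~\ref{de2}. If every $f_i$ is lower semicontinuous, use Step~2 to write each $f_i$ as a pointwise increasing limit of bounded Lipschitz $f_{i,k}\leq f_i$, and set the diagonal
\[
g_n:=\max_{1\leq i,k\leq n}f_{i,k}.
\]
Then $g_n$ is bounded Lipschitz, increasing in $n$, bounded above by $f$, and $\lim_n g_n(x)\geq f_{i,k}(x)$ for every $i,k$, so $g_n\uparrow f$ pointwise. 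Consequently $g_n(X)\in L_G^1\subset L_G^{1^{\ast}}$ with $g_n(X)\uparrow f(X)$ q.s., and again $f(X)\in L_G^{1_{\ast}^{\ast}}(\Omega)$. A mixed sequence is handled by combining both constructions.

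\textbf{Where the difficulty lies.} The only non-routine point is Step~1: the regularizations $X_n$ live on the infinite-dimensional path space $\Omega$, not in the cylindrical algebra $L_{ip}(\Omega)$, so membership in $L_G^1(\Omega)$ cannot be seen directly from Definition~\ref{def2.5}. It is precisely the $L_G^1$-characterization of Theorem~\ref{the2.9} (quasi-continuity + uniform integrability) that bridges the gap; once Step~1 is in place, Steps~2–3 are routine monotone class arguments over the nested extensions $L_G^{1^{\ast}}\subset L_G^{\ast 1}\subset L_G^{1_{\ast}^{\ast}}$.
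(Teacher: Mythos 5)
Your proof is correct and follows essentially the same route as the paper: the paper's (very terse) proof simply invokes the existence of bounded continuous $\varphi_i\downarrow f$ for u.s.c.\ $f$, applies Theorem~\ref{the2.9} to get $\varphi_i(X)\in L_G^1(\Omega)$, and declares (1) and (3) "similar", which is exactly what your Moreau--Yosida regularizations and diagonal argument make explicit. No gaps; you have just written out the standard approximation the paper takes for granted.
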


\begin{proof}
We only prove (2). (1) and (3) can be proved similarly. For bounded upper
semicontinuous function $f$, there exists a sequence bounded continuous
functions $\varphi_{i}$ such that $\varphi_{i}\downarrow f$. It is easy to
check that $\varphi_{i}(X)\in L_{G}^{1}(\Omega)$ and $\varphi_{i}(X)\downarrow
f(X)$. Thus $f(X)\in L_{G}^{1^{\ast}}(\Omega)$. Similar analysis for lower
semicontinuous function.
\end{proof}

\begin{remark}
Let $X\in L_{G}^{1}(\Omega)$, $-\infty<a<b<\infty$. Then by the above
proposition, $I_{\{X\leq a\}}$, $I_{\{X\geq a\}}$, $I_{\{a\leq X\leq b\}}\in
L_{G}^{1^{\ast}}(\Omega)$, $I_{\{X<a\}}$, $I_{\{a<X<b\}}$, $I_{\{a\leq
X<b\}}\in L_{G}^{{\ast}1}(\Omega)$.
\end{remark}

\begin{proposition}
\label{pro8}Let $X_{n}\in L_{G}^{1^{\ast}}(\Omega)$ and $X_{n}\geq Y$ for
$n\geq1$, where $Y\in \mathbb{L}^{1}(\Omega)$. Then $\lim \inf_{n\rightarrow
\infty}X_{n}\in \mathcal{L}_{G}^{1_{\ast}^{\ast}}(\Omega)$.
\end{proposition}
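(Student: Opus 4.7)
The plan is to express $\liminf_{k \to \infty} X_k$ as an increasing q.s.-limit of elements of $L_G^{1^\ast}(\Omega)$ and then invoke the definition of $\mathcal{L}_G^{1_\ast^\ast}(\Omega)$. Define $Z_n := \inf_{k \geq n} X_k$; clearly $Z_n$ is increasing and $Z_n \uparrow \liminf_{k \to \infty} X_k$ q.s. Since $Y \leq Z_n \leq X_n$ and both $Y, X_n \in \mathbb{L}^1(\Omega)$, we also have $Z_n \in \mathbb{L}^1(\Omega)$.

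The substantive step is to show $Z_n \in L_G^{1^\ast}(\Omega)$ for each fixed $n$. For each $k \geq n$ fix a sequence $\{X_k^j\}_{j=1}^\infty \subset L_G^1(\Omega)$ with $X_k^j \downarrow X_k$ q.s., and set
$$W_m := \bigwedge_{k=n}^{n+m} X_k^m \in L_G^1(\Omega),$$
where membership in $L_G^1(\Omega)$ uses that finite minima of $L_G^1$-elements remain in $L_G^1(\Omega)$ (as already exploited in the proof of Proposition \ref{pro3}(7)). This is the standard diagonal construction: since $X_k^{m+1} \leq X_k^m$ for every $k$, and $W_{m+1}$ carries one extra term, the sequence $W_m$ is decreasing in $m$. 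For any fixed $k \geq n$ and any $m \geq k - n$ we have $W_m \leq X_k^m$, hence $\lim_m W_m \leq \lim_m X_k^m = X_k$; taking the infimum over $k \geq n$ gives $\lim_m W_m \leq Z_n$. Conversely, $W_m \geq \bigwedge_{k=n}^{n+m} X_k \geq Z_n$ because $X_k^m \geq X_k$. Therefore $W_m \downarrow Z_n$ q.s., and combined with $Z_n \in \mathbb{L}^1(\Omega)$ this yields $Z_n \in L_G^{1^\ast}(\Omega)$.

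Finally, the sequence $\{Z_n\}_{n=1}^\infty \subset L_G^{1^\ast}(\Omega)$ satisfies $Z_n \uparrow \liminf_{k \to \infty} X_k$ q.s., so the definition of $\mathcal{L}_G^{1_\ast^\ast}(\Omega)$ immediately gives $\liminf_{k \to \infty} X_k \in \mathcal{L}_G^{1_\ast^\ast}(\Omega)$. The main (and really only) obstacle is the diagonal construction realizing each $Z_n$ as a decreasing q.s.-limit of elements of $L_G^1(\Omega)$; the uniform lower bound $X_k \geq Y \in \mathbb{L}^1(\Omega)$ is what keeps $Z_n$ inside $\mathbb{L}^1(\Omega)$ rather than merely in $\mathcal{L}_G^{1^\ast}(\Omega)$, but once the diagonal sequence $W_m$ is identified, everything else is a matter of unpacking the definitions and using monotonicity.
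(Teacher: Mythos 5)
Your proposal is correct and follows essentially the same route as the paper: write $\liminf_k X_k = \sup_n \inf_{k\geq n} X_k$, show each $\inf_{k\geq n}X_k$ lies in $L_G^{1^\ast}(\Omega)$ (the lower bound $Y\in\mathbb{L}^1(\Omega)$ giving integrability), and conclude from the definition of $\mathcal{L}_G^{1_\ast^\ast}(\Omega)$. The only difference is that you unfold explicitly the diagonal construction $W_m=\bigwedge_{k=n}^{n+m}X_k^m$, which the paper leaves implicit by appealing to the convergence $\wedge_{k=n}^{n+m}X_k\downarrow\inf_{k\geq n}X_k$ together with the closure property already established in Proposition \ref{pro3}(7).
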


\begin{proof}
It is simply due to
\[
\underset{n\rightarrow \infty}{\liminf}X_{n}=\sup_{n}(\inf_{k\geq n}X_{k}),
\]
and $\wedge_{k=n}^{n+m}X_{k}\downarrow \inf_{k\geq n}X_{k}\in L_{G}^{1^{\ast}%
}(\Omega)$ as $m\rightarrow \infty$.
\end{proof}

We now give the definition for a class of stopping times.

\begin{definition}
\label{def3.7} A random time $\tau:\Omega \rightarrow \lbrack0,\infty)$ is
called a $*$-stopping time if $I_{\{ \tau \geq t\}}\in L_{G}^{1^{\ast}}(\Omega_{t})$
for each $t\geq0$.
\end{definition}

\begin{definition}
\label{def3.8} For a given $*$-stopping time $\tau$ and $\xi \in \bar{L}_{G}^{1_{\ast}%
^{\ast}}(\Omega)$, we define $\mathbb{\hat{E}}_{\tau}[\xi]=M_{\tau}$, where
$M_{t}=\mathbb{\hat{E}}_{t}[\xi]$ for $t\geq0$.
\end{definition}

\begin{example}
\label{ex3.9} Let $(X_{t})_{t\leq T}$ be an $n$-dimensional right continuous
process such that $X_{t}\in L_{G}^{1}(\Omega_{t};\mathbb{R}^{n})$ for $t\leq
T$. We define a random time for each fixed closed set $F\subset \mathbb{R}^{n}$
as follows:%
\[
\tau=\inf \{t\geq0:X_{t}\not \in F\} \wedge T.
\]
It is easy to check that
\[
\{ \tau \geq t\}=%
%TCIMACRO{\dbigcap \limits_{s\in\lbrack0,t)\cap\mathbb{Q}}}%
%BeginExpansion
{\displaystyle \bigcap \limits_{s\in \lbrack0,t)\cap \mathbb{Q}}}
%EndExpansion
\{X_{s}\in F\}=%
%TCIMACRO{\dbigcap \limits_{s\in\lbrack0,t)\cap\mathbb{Q}}}%
%BeginExpansion
{\displaystyle \bigcap \limits_{s\in \lbrack0,t)\cap \mathbb{Q}}}
%EndExpansion
\{d(X_{s},F)=0\},
\]
and%
\[
I_{\{ \tau \geq t\}}=\inf \{I_{\{d(X_{s},F)=0\}}:s\in \lbrack0,t)\cap \mathbb{Q}\},
\]
where $d(x,F):=\inf_{\{y\in F\}}|x-y|$. By Propositions \ref{pro3} and \ref{pro3.6}, we know
that $I_{\{ \tau \geq t\}}\in L_{G}^{1^{\ast}}(\Omega_{t})$. Thus $\tau$ is a
$*$-stopping time.
\end{example}

\begin{proposition}
\label{pro3.10} Let $\tau$ be a $*$-stopping time. Then $\tau \wedge T\in
L_{G}^{1^{\ast}}(\Omega)$ for each given $T>0$ and $\tau \in \mathcal{L}%
_{G}^{1_{\ast}^{\ast}}(\Omega)$.
\end{proposition}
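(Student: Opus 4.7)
The plan is to approximate $\tau \wedge T$ from above by an explicit monotone decreasing sequence built from the indicators $I_{\{\tau\ge t\}}$, and then deduce membership in $L_{G}^{1^{\ast}}(\Omega)$ via the same diagonal trick used in the proof of Proposition \ref{pro3}(7). Concretely, starting from the identity $\tau\wedge T = \int_0^T I_{\{\tau\ge s\}}\,ds$, I will take the upper dyadic Riemann sums
\[
\tau_n \;:=\; \frac{T}{2^n}\sum_{k=0}^{2^n-1} I_{\{\tau \ge kT/2^n\}}.
\]
For each fixed $\omega$, the step function whose value on $[kT/2^n,(k+1)T/2^n)$ is $I_{\{\tau(\omega)\ge kT/2^n\}}$ pointwise majorises the nonincreasing function $s\mapsto I_{\{\tau(\omega)\ge s\}}$ and decreases on every subinterval when the grid is refined; integrating over $[0,T]$ then gives $\tau_n\downarrow \tau\wedge T$ with error at most $T/2^n$.

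Next I would verify that each $\tau_n$ lies in $L_{G}^{1^{\ast}}(\Omega)$. The indicator $I_{\{\tau\ge kT/2^n\}}$ belongs to $L_{G}^{1^{\ast}}(\Omega_{kT/2^n})\subset L_{G}^{1^{\ast}}(\Omega)$ by the definition of $\ast$-stopping time. The space $L_{G}^{1^{\ast}}(\Omega)$ is stable under nonnegative finite sums: given $X,Y\in L_{G}^{1^{\ast}}(\Omega)$ with approximants $X_m,Y_m\in L_{G}^{1}(\Omega)$ satisfying $X_m\downarrow X$ and $Y_m\downarrow Y$, one has $X_m+Y_m\in L_{G}^{1}(\Omega)$ and $X_m+Y_m\downarrow X+Y$. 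Iterating, $\tau_n\in L_{G}^{1^{\ast}}(\Omega)$, and since $\tau_n\le T$ is bounded, $\tau_n\in\mathbb{L}^1(\Omega)$.

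To pass from $\tau_n\downarrow \tau\wedge T$ with $\tau_n\in L_{G}^{1^{\ast}}(\Omega)$ to $\tau\wedge T\in L_{G}^{1^{\ast}}(\Omega)$, I would copy the diagonal construction from Proposition \ref{pro3}(7): choose $\xi_m^{n}\in L_{G}^{1}(\Omega)$ with $\xi_m^{n}\downarrow \tau_n$ as $m\to\infty$, and set $\eta_n:=\wedge_{i=1}^{n}\xi_n^{i}\in L_{G}^{1}(\Omega)$. Then $\eta_n\ge \tau_n\ge \tau\wedge T$ and $\eta_n\downarrow \tau\wedge T$, giving $\tau\wedge T\in L_{G}^{1^{\ast}}(\Omega)$. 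The second assertion follows at once: $\tau\wedge k\uparrow\tau$ q.s.\ as $k\to\infty$, and each $\tau\wedge k\in L_{G}^{1^{\ast}}(\Omega)$ by the first part, so $\tau\in\mathcal{L}_{G}^{1_{\ast}^{\ast}}(\Omega)$ directly by the definition of this space.

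The main technical point is the monotonicity $\tau_{n+1}\le\tau_n$ in the first step, which is where the choice of evaluating the Riemann sum at the \emph{left} endpoint of each dyadic interval matters; this is verified by a short case check on each pair of refined subintervals, using that $s\mapsto I_{\{\tau\ge s\}}$ is nonincreasing. Everything else reduces to properties of $L_{G}^{1^{\ast}}(\Omega)$ and $\mathcal{L}_{G}^{1_{\ast}^{\ast}}(\Omega)$ already established in Section 3.1 and 3.2.
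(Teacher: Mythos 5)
Your proof is correct and follows essentially the same route as the paper: the approximant $\tau_n=\frac{T}{2^n}\sum_{k=0}^{2^n-1}I_{\{\tau\ge kT/2^n\}}$ is exactly the paper's $\tau^n=\sum_{i=1}^{2^n}T2^{-n}I_{\{(i-1)T2^{-n}\le\tau\}}$ (i.e.\ $\tau$ rounded up to the dyadic grid and capped at $T$), and the passage to the limit via the diagonal construction of Proposition \ref{pro3}(7) plus $\tau\wedge m\uparrow\tau$ is what the paper does, only stated there more tersely. Your added verifications (stability of $L_{G}^{1^{\ast}}(\Omega)$ under finite sums, boundedness giving $\mathbb{L}^{1}$-membership, and the monotonicity of the refined left-endpoint sums) are details the paper leaves implicit.
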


\begin{proof}
For each $n\geq1$, we define%
\begin{align*}
\tau^{n} &  :=\sum_{i=1}^{2^{n}}iT2^{-n}I_{\{(i-1)T2^{-n}\leq \tau<iT2^{-n}%
\}}+TI_{\{ \tau \geq T\}}\\
&  =\sum_{i=1}^{2^{n}}T2^{-n}I_{\{(i-1)T2^{-n}\leq \tau \}}\in L_{G}^{1^{\ast}%
}(\Omega).
\end{align*}
Since $\tau^{n}\downarrow$ $\tau \wedge T$ and $\tau \wedge m\uparrow \tau$ as
$m\rightarrow \infty$, we have  $\tau \wedge T\in L_{G}^{1^{\ast}}(\Omega)$ and
$\tau \in \mathcal{L}_{G}^{1_{\ast}^{\ast}}(\Omega)$.
\end{proof}

We now prove the following optional stopping theorem.

\begin{theorem}
\label{the3.11} Suppose that there exists some $\underline{\sigma}^{2}>0$ such
that $G(A)-G(B)\geq \underline{\sigma}^{2}\mathrm{tr}[A-B]$ for any $A\geq B$.
Let $M_{t}=\mathbb{\hat{E}}_{t}[\xi]$ for $t\leq T$, $\xi \in L_{G}^{p}%
(\Omega_{T})$ with $p>1$, and let $\sigma$, $\tau$ be two $*$-stopping times with
$\sigma \leq \tau \leq T$. Then $M_{\tau}$, $M_{\sigma}\in \bar{L}_{G}^{{\ast}1}(\Omega_{T})$ and
\[
M_{\sigma}=\mathbb{\hat{E}}_{\sigma}[M_{\tau}]\text{ \ q.s.}.
\]

\end{theorem}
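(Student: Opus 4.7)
The strategy is to reduce to discrete-valued $*$-stopping times by dyadic approximation, establish the optional stopping identity in the discrete case via the tower and partition properties, and then pass to the limit using path regularity of $M$ afforded by the non-degeneracy hypothesis $G(A)-G(B)\ge\underline\sigma^{2}\mathrm{tr}[A-B]$.

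Define, as in the proof of Proposition~\ref{pro3.10}, the dyadic approximations
\[
\tau_n := \sum_{k=1}^{2^n} \frac{kT}{2^n}\, I_{\{(k-1)T/2^n \le \tau < kT/2^n\}},
\]
and analogously $\sigma_n$. Since $I_{\{\tau\ge s\}}\in L_G^{1^{*}}(\Omega_s)$ for each dyadic $s$, the indicator $I_{\{\tau_n=kT/2^n\}}=I_{\{\tau\ge(k-1)T/2^n\}}-I_{\{\tau\ge kT/2^n\}}$ lies in $L_G^{*1}(\Omega_{kT/2^n})$. Thus $\tau_n\downarrow\tau$, $\sigma_n\downarrow\sigma$, and $\sigma_n\le\tau_n$. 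Put $M_{\tau_n}:=\sum_k M_{kT/2^n}\,I_{\{\tau_n=kT/2^n\}}$, which lies in $\bar L_G^{*1}(\Omega_T)$: each $M_{kT/2^n}\in L_G^p(\Omega_{kT/2^n})$ can be $\mathbb L^1$-approximated by $L_{ip}$ cylinder functions whose product with the bounded $L_G^{*1}$-indicator lands in $L_G^{*1}$ (by decomposing both factors into their positive and negative parts and applying Proposition~\ref{pro3}(5) to each nonnegative pair); analogously for $M_{\sigma_n}$.

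For the discrete step, I fix $n$ and work on the partition $\{\sigma_n=jT/2^n\}_{j}$. Since $\sigma_n\le\tau_n$, the expansion of $M_{\tau_n}$ restricted to this piece involves only indices $k\ge j$. Iterating the tower identity $\hat{\mathbb E}_{s}\hat{\mathbb E}_{t}=\hat{\mathbb E}_{s\wedge t}$ backwards from $T$ down to $jT/2^n$, together with the partition property (Theorem~\ref{the2.11}) extended to $L_G^{*1}$-indicators via Propositions~\ref{pro3}(5)--(6) and~\ref{pro7}, collapses $\hat{\mathbb E}_{jT/2^n}[M_{\tau_n}]$ to $M_{jT/2^n}$ on $\{\sigma_n=jT/2^n\}$. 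Reassembling across $j$ yields $\hat{\mathbb E}_{\sigma_n}[M_{\tau_n}]=M_{\sigma_n}$ in the sense of Definition~\ref{def3.8}.

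To pass to the limit, I invoke that under the non-degeneracy hypothesis and $\xi\in L_G^p(\Omega_T)$ with $p>1$, the $G$-martingale $(M_t)_{t\le T}$ admits a quasi-surely continuous modification with $\sup_{t\le T}|M_t|$ controlled in $L_G^p$ (Song~\cite{Song-11}). Path continuity with $\tau_n\downarrow\tau$, $\sigma_n\downarrow\sigma$ and the $L_G^p$-bound give $M_{\tau_n}\to M_\tau$ and $M_{\sigma_n}\to M_\sigma$ in $\mathbb L^1$, placing $M_\tau,M_\sigma\in\bar L_G^{*1}(\Omega_T)$. The contractivity Proposition~\ref{pro3.3} combined with continuity of $M$ propagates the discrete identity to $M_\sigma=\hat{\mathbb E}_\sigma[M_\tau]$. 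The principal obstacle is precisely this path regularity: without non-degeneracy the continuous modification need not exist and the stopping-time approximation breaks. A secondary technicality is the justification of the tower/partition manipulations for indicators in $L_G^{*1}\setminus L_G^1$, which requires careful bookkeeping with the extensions developed in Sections~3.1--3.3.
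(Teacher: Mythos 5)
Your discretization and the backward tower/partition computation for $\tau^{n}$ are essentially the paper's Step 1, but the way you pass to the limit contains a genuine gap. You claim that quasi-sure path continuity of $M$, $\tau_{n}\downarrow\tau$, and a bound on $\sup_{t\leq T}|M_{t}|$ in $L_{G}^{p}$ together give $M_{\tau_{n}}\rightarrow M_{\tau}$ in $\mathbb{L}^{1}$. Under a sublinear expectation this inference fails: quasi-sure convergence does not imply convergence in capacity (there is no Egorov theorem for $c$), and the dominated convergence theorem is not available; the only convergence theorems at hand are the monotone ones of Theorem \ref{the2.10} and Proposition \ref{pro3}(7)/\ref{pro7}(7). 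Since $M_{\tau_{n}}-M_{\tau}$ is neither monotone nor known to converge in capacity, your ``uniform integrability plus q.s.\ convergence'' step has no support, and with it both the membership $M_{\tau}\in\bar{L}_{G}^{\ast1}(\Omega_{T})$ and the propagation of the discrete identity collapse.

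The paper circumvents exactly this obstacle, and this is where the nondegeneracy hypothesis is really used: it invokes the $G$-martingale decomposition $M_{t}=\mathbb{\hat{E}}[\xi]+\int_{0}^{t}Z_{s}dB_{s}+K_{t}$ with $Z\in H_{G}^{1}$ and $K$ decreasing. The stochastic integral part genuinely converges in $\mathbb{L}^{1}$, via the B--D--G inequality and the truncation estimate $\mathbb{\hat{E}}[(\int_{0}^{T}|Z_{s}|^{2}I_{\{|Z_{s}|\geq N_{1}\}}ds)^{1/2}]\rightarrow0$; the part $K_{\tau^{n}}$ converges monotonically upward to $K_{\tau}$, so $-K_{\tau}\in L_{G}^{1^{\ast}}(\Omega_{T})$ and is handled by the downward extension rather than by $\mathbb{L}^{1}$ convergence. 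The identity $\mathbb{\hat{E}}_{t}[M_{\tau}]=M_{t\wedge\tau}$ is then obtained by squeezing between two one-sided inequalities, using $K_{\tau}\leq K_{t\wedge\tau}$ for the upper bound, followed by a truncation $\xi^{n}=(\xi\wedge n)\vee(-n)$ and Song's estimate $\mathbb{\hat{E}}[\sup_{t\leq T}|M_{t}^{n}-M_{t}|]\rightarrow0$ for the unbounded case. Your proof would need to be restructured along these lines (or you would need to supply an independent proof that $M_{\tau_{n}}\rightarrow M_{\tau}$ in $\mathbb{L}^{1}$, which is not easier than the decomposition route). A secondary, fixable issue is your use of the product rule for indicators: Proposition \ref{pro3}(5) requires both factors nonnegative, which is why the paper adds the constant $2L$ inside the telescoping sum before applying it.
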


\begin{proof}
By Definition \ref{def3.8}, we only need to prove that $M_{\tau}\in \bar{L}%
_{G}^{{\ast}1}(\Omega_{T})$ and $\mathbb{\hat{E}}_{t}[M_{\tau}%
]=M_{t\wedge \tau}$ \ q.s..

Step 1. Suppose that there exists a $L>0$ such that $|\xi|\leq L$. For each
given $n\geq1$, we set $N=2^{n}$, $t_{i}=iT2^{-n}$, $i\leq N$, and define%
\[
\tau^{n}:=\sum_{i=1}^{N}t_{i}I_{\{t_{i-1}\leq \tau<t_{i}\}}+TI_{\{ \tau=T\}}.
\]
It is easy to check that
\begin{align*}
M_{\tau^{n}}  & =\sum_{i=1}^{N}M_{t_{i}}I_{\{t_{i-1}\leq \tau<t_{i}\}}%
+M_{T}I_{\{ \tau=T\}}=M_{t_{1}}+\sum_{i=2}^{N}(M_{t_{i}}-M_{t_{i-1}}%
)I_{\{ \tau \geq t_{i-1}\}}\\
& =M_{t_{1}}+\sum_{i=2}^{N}(M_{t_{i}}-M_{t_{i-1}})^{+}I_{\{ \tau \geq t_{i-1}%
\}}-\sum_{i=2}^{N}(M_{t_{i}}-M_{t_{i-1}})^{-}I_{\{ \tau \geq t_{i-1}\}}.
\end{align*}
Thus $M_{\tau^{n}}\in L_{G}^{{\ast}1}(\Omega_{T})$ and
\begin{align*}
\mathbb{\hat{E}}_{t_{N-1}}[M_{\tau^{n}}]  & =\mathbb{\hat{E}}_{t_{N-1}}%
[\eta+(M_{t_{N}}-M_{t_{N-1}}+2L)I_{\{ \tau \geq t_{N-1}\}}-2LI_{\{ \tau \geq
t_{N-1}\}}]\\
& =\eta-2LI_{\{ \tau \geq t_{N-1}\}}+\mathbb{\hat{E}}_{t_{N-1}}[M_{t_{N}%
}-M_{t_{N-1}}+2L]I_{\{ \tau \geq t_{N-1}\}}\\
& =\eta,
\end{align*}
where $\eta=M_{t_{1}}+\sum_{i=2}^{N-1}(M_{t_{i}}-M_{t_{i-1}})I_{\{ \tau \geq
t_{i-1}\}}$. Repeat this process, we can get%
\[
\mathbb{\hat{E}}_{t}[M_{\tau^{n}}]=M_{t_{1}\wedge t}+\sum_{i=2}^{N}%
(M_{t_{i}\wedge t}-M_{t_{i-1}\wedge t})I_{\{ \tau \geq t_{i-1}\}}=M_{t\wedge
\tau^{n}}.
\]
By Theorem 4.5 in Song \cite{Song11} (or Theorem 5.1 in \cite{STZ}), there
exist a $Z\in H_{G}^{1}(0,T;\mathbb{R}^{d})$ and a decreasing $G$-martingale $K$ with
$K_{0}=0$, $K_{T}\in L_{G}^{1}(\Omega_{T})$ such that%
\[
M_{t}=\mathbb{\hat{E}}[\xi]+\int_{0}^{t}Z_{s}dB_{s}+K_{t}.
\]
Similar to $M_{\tau^{n}}$, we can get $\int_{0}^{\tau^{n}}Z_{s}dB_{s}\in
L_{G}^{{\ast}1}(\Omega_{T})$ and $-K_{\tau^{n}}\in L_{G}^{1^{\ast}}%
(\Omega_{T})$. By the B-D-G inequality, we can get%
\begin{align*}
\mathbb{\hat{E}}[|\int_{\tau}^{\tau^{n}}Z_{s}dB_{s}|]  & \leq C\mathbb{\hat
{E}}[(\int_{\tau}^{\tau^{n}}|Z_{s}|^{2}ds)^{1/2}]\\
& \leq C\mathbb{\hat{E}}[(\int_{\tau}^{\tau^{n}}(|Z_{s}|\wedge N_{1}%
)^{2}ds)^{1/2}+(\int_{\tau}^{\tau^{n}}|Z_{s}|^{2}I_{\{|Z_{s}|\geq N_{1}%
\}}ds)^{1/2}]\\
& \leq C(N_{1}\sqrt{T2^{-n}}+\mathbb{\hat{E}}[(\int_{0}^{T}|Z_{s}%
|^{2}I_{\{|Z_{s}|\geq N_{1}\}}ds)^{1/2}]).
\end{align*}
Thus we can get $\underset{n\rightarrow \infty}{\lim \sup}\mathbb{\hat{E}}%
[|\int_{\tau}^{\tau^{n}}Z_{s}dB_{s}|]\leq C\mathbb{\hat{E}}[(\int_{0}%
^{T}|Z_{s}|^{2}I_{\{|Z_{s}|\geq N_{1}\}}ds)^{1/2}]$ for each $N_{1}>0$. Since
$Z\in H_{G}^{1}(0,T;\mathbb{R}^{d})$, using the same analysis as in
Proposition 18 in \cite{DHP11}, we can obtain $\mathbb{\hat{E}}[(\int_{0}%
^{T}|Z_{s}|^{2}I_{\{|Z_{s}|\geq N_{1}\}}ds)^{1/2}]\rightarrow0$ as
$N_{1}\rightarrow \infty$, which implies $\mathbb{\hat{E}}[|\int_{\tau}%
^{\tau^{n}}Z_{s}dB_{s}|]\rightarrow0$ as $n\rightarrow \infty$. Thus $\int
_{0}^{\tau}Z_{s}dB_{s}\in \bar{L}_{G}^{{\ast}1}(\Omega_{T})$. Noting that
$K_{\tau^{n}}\uparrow K_{\tau}$, then we get $-K_{\tau}\in L_{G}^{1^{\ast}%
}(\Omega_{T})$. Thus $M_{\tau}\in \bar{L}_{G}^{{\ast}1}(\Omega_{T})$ and
\[
M_{t\wedge \tau^{n}}=\mathbb{\hat{E}}_{t}[M_{\tau^{n}}]\leq \mathbb{\hat{E}}%
_{t}[\mathbb{\hat{E}}[\xi]+\int_{0}^{\tau^{n}}Z_{s}dB_{s}+K_{\tau}].
\]
Noting that $M_{t\wedge \tau^{n}}\rightarrow M_{t\wedge \tau}$ q.s. and
$\mathbb{\hat{E}}_{t}[M_{\tau}]=\mathbb{L}^{1}-\lim_{n\rightarrow \infty
}\mathbb{\hat{E}}_{t}[\mathbb{\hat{E}}[\xi]+\int_{0}^{\tau^{n}}Z_{s}%
dB_{s}+K_{\tau}]$, then we obtain $M_{t\wedge \tau}\leq \mathbb{\hat{E}}%
_{t}[M_{\tau}]$. On the other hand, by $K_{\tau}\leq K_{t\wedge \tau}$ and
$-K_{t\wedge \tau}\in L_{G}^{1^{\ast}}(\Omega_{t})$, we obtain
\[
\mathbb{\hat{E}}_{t}[M_{\tau}]\leq \mathbb{\hat{E}}_{t}[\mathbb{\hat{E}}%
[\xi]+\int_{0}^{\tau}Z_{s}dB_{s}+K_{t\wedge \tau}]=\mathbb{\hat{E}}%
[\xi]+K_{t\wedge \tau}+\mathbb{\hat{E}}_{t}[\int_{0}^{\tau}Z_{s}dB_{s}].
\]
Using the above method, we can get $\mathbb{\hat{E}}_{t}[\int_{0}^{\tau^{n}%
}Z_{s}dB_{s}]=\int_{0}^{t\wedge \tau^{n}}Z_{s}dB_{s}$, and then $\mathbb{\hat
{E}}_{t}[\int_{0}^{\tau}Z_{s}dB_{s}]=\int_{0}^{t\wedge \tau}Z_{s}dB_{s}$, which
implies $\mathbb{\hat{E}}_{t}[M_{\tau}]\leq M_{t\wedge \tau}$. Thus
$\mathbb{\hat{E}}_{t}[M_{\tau}]=M_{t\wedge \tau}$.

Step 2. For $\xi \in L_{G}^{p}(\Omega_{T})$ with $p>1$, we set $\xi^{n}%
=(\xi \wedge n)\vee(-n)$ and $M_{t}^{n}=\mathbb{\hat{E}}_{t}[\xi^{n}]$. By Step
1, we get $\mathbb{\hat{E}}_{t}[M_{\tau}^{n}]=M_{t\wedge \tau}^{n}$. By Theorem
3.4 in \cite{Song11} and Proposition 3.9 in \cite{HJPS}, we can obtain
$\mathbb{\hat{E}}[\sup_{t\leq T}|M_{t}^{n}-M_{t}|]\rightarrow0$ as
$n\rightarrow \infty$. Thus $\mathbb{\hat{E}}[|M_{\tau}^{n}-M_{\tau
}|]\rightarrow0$, which implies $M_{\tau}\in \bar{L}_{G}^{{\ast}1}%
(\Omega_{T})$ and $\mathbb{\hat{E}}_{t}[M_{\tau}]=\mathbb{L}^{1}%
-\lim_{n\rightarrow \infty}\mathbb{\hat{E}}_{t}[M_{\tau}^{n}]=\mathbb{L}%
^{1}-\lim_{n\rightarrow \infty}M_{t\wedge \tau}^{n}=M_{t\wedge \tau}$. The proof
is complete.
\end{proof}

\section{Extension of nonlinear expectations}

Let $(\Omega,L_{ip}(\Omega),\mathbb{\hat{E}}[\cdot])$ be the $G$-expectation
space and let $(\Omega,L_{ip}(\Omega),(\mathbb{\tilde{E}}_{t}[\cdot])_{t\geq
0})$ be a consistent nonlinear expectation satisfying the following properties:

\begin{description}
\item[(1)] $\mathbb{\tilde{E}}_{t}[\cdot]:L_{ip}(\Omega)\rightarrow
L_{ip}(\Omega_{t})$;

\item[(2)] $X$, $Y\in L_{ip}(\Omega)$, $X\leq Y\Longrightarrow \mathbb{\tilde
{E}}_{t}[X]\leq \mathbb{\tilde{E}}_{t}[Y]$;

\item[(3)] $X\in L_{ip}(\Omega_{t})$, $Y\in L_{ip}(\Omega)\Longrightarrow
\mathbb{\tilde{E}}_{t}[X+Y]=X+\mathbb{\tilde{E}}_{t}[Y]$;

\item[(4)] $X\in L_{ip}(\Omega)\Longrightarrow \mathbb{\tilde{E}}%
_{s}[\mathbb{\tilde{E}}_{t}[X]]=\mathbb{\tilde{E}}_{s\wedge t}[X]$;

\item[(5)] $X$, $Y\in L_{ip}(\Omega)\Longrightarrow \mathbb{\tilde{E}}%
_{t}[X]-\mathbb{\tilde{E}}_{t}[Y]\leq \mathbb{\hat{E}}_{t}[X-Y]$.
\end{description}

We have the following proposition.

\begin{proposition}
\label{pro4.1}(\cite{P10}) For each $X$, $Y\in L_{ip}(\Omega)$, we have
$\mathbb{\tilde{E}}_{t}[X]\leq \mathbb{\hat{E}}_{t}[X]$ and $|\mathbb{\tilde
{E}}_{t}[X]-\mathbb{\tilde{E}}_{t}[Y]|\leq \mathbb{\hat{E}}_{t}[|X-Y|]$.
\end{proposition}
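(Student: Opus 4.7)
My plan is to derive both inequalities as immediate consequences of the dominance axiom (5), once we verify the normalization $\mathbb{\tilde{E}}_t[0]=0$.

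First I would establish $\mathbb{\tilde{E}}_t[0]=0$. Since $\mathbb{\tilde{E}}_t[X]\in L_{ip}(\Omega_t)$ by (1), the tower property (4) applied with $s=t$ gives $\mathbb{\tilde{E}}_t[\mathbb{\tilde{E}}_t[X]]=\mathbb{\tilde{E}}_t[X]$, while (3) applied with $\mathbb{\tilde{E}}_t[X]\in L_{ip}(\Omega_t)$ and $Y=0$ gives $\mathbb{\tilde{E}}_t[\mathbb{\tilde{E}}_t[X]+0]=\mathbb{\tilde{E}}_t[X]+\mathbb{\tilde{E}}_t[0]$. Comparing the two expressions yields $\mathbb{\tilde{E}}_t[0]=0$.

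Next I would deduce the first inequality. Specializing (5) with $Y=0$ gives
\[
\mathbb{\tilde{E}}_t[X]-\mathbb{\tilde{E}}_t[0]\leq\mathbb{\hat{E}}_t[X-0],
\]
which combined with the previous step yields $\mathbb{\tilde{E}}_t[X]\leq\mathbb{\hat{E}}_t[X]$ for every $X\in L_{ip}(\Omega)$.

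For the second inequality I would apply (5) in both directions: it gives directly $\mathbb{\tilde{E}}_t[X]-\mathbb{\tilde{E}}_t[Y]\leq\mathbb{\hat{E}}_t[X-Y]$ and, by swapping $X$ and $Y$, $\mathbb{\tilde{E}}_t[Y]-\mathbb{\tilde{E}}_t[X]\leq\mathbb{\hat{E}}_t[Y-X]$. Since $X-Y\leq|X-Y|$ and $Y-X\leq|X-Y|$ pointwise and since the conditional $G$-expectation is monotone (property (i) of $\mathbb{\hat{E}}_t$ recalled in Section 2), both right-hand sides are bounded above by $\mathbb{\hat{E}}_t[|X-Y|]$, which gives the desired Lipschitz bound.

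I do not expect any serious obstacle here; the only non-routine point is noticing that one must use the interplay between (3) and (4) to pin down the normalization $\mathbb{\tilde{E}}_t[0]=0$ before (5) can be exploited, since no explicit constant-preservation axiom is assumed on $\mathbb{\tilde{E}}_t$.
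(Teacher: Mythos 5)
Your argument is correct. Note that the paper itself gives no proof of this proposition---it is simply quoted from \cite{P10}---so there is nothing to compare against; your derivation is the standard one and is complete: the normalization $\mathbb{\tilde{E}}_{t}[0]=0$ follows exactly as you say from combining the idempotence $\mathbb{\tilde{E}}_{t}[\mathbb{\tilde{E}}_{t}[X]]=\mathbb{\tilde{E}}_{t}[X]$ (property (4) with $s=t$, using (1) to ensure $\mathbb{\tilde{E}}_{t}[X]\in L_{ip}(\Omega_{t})$) with the translation property (3), and then both inequalities drop out of the domination axiom (5) together with the monotonicity of $\mathbb{\hat{E}}_{t}$ (using that $\pm(X-Y)\leq|X-Y|$ and that $|X-Y|\in L_{ip}(\Omega)$ since $X-Y$ is bounded). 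Your closing remark is also apt: since the list (1)--(5) contains no explicit constant-preservation axiom for $\mathbb{\tilde{E}}_{t}$, the step pinning down $\mathbb{\tilde{E}}_{t}[0]=0$ is genuinely needed before (5) can be specialized at $Y=0$.
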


By this proposition, $\mathbb{\tilde{E}}_{t}[\cdot]$ can be extended to
$L_{G}^{1}(\Omega)$. In the following, we consider the extension of
$\mathbb{\tilde{E}}_{t}[\cdot]$.

\begin{definition}
\label{de4.2} For each $X\in L_{G}^{1^{\ast}}(\Omega)$, there exists a
sequence $\{X_{n}\}_{n=1}^{\infty}\subset L_{G}^{1}(\Omega)$ such that
$X_{n}\downarrow X$ q.s., we define%
\[
\mathbb{\tilde{E}}_{t}[X]=\lim_{n\rightarrow \infty}\mathbb{\tilde{E}}%
_{t}[X_{n}]\text{ q.s..}%
\]

\end{definition}

The following proposition show that the above definition is meaningful.

\begin{proposition}
\label{pro4.3}Let $X\in L_{G}^{1^{\ast}}(\Omega)$ and let $\{X_{n}%
\}_{n=1}^{\infty}$, $\{ \tilde{X}_{n}\}_{n=1}^{\infty}$ be two sequences in
$L_{G}^{1}(\Omega)$ such that $X_{n}\downarrow X$ and $\tilde{X}_{n}\downarrow
X$ q.s.. Then
\[
\lim_{n\rightarrow \infty}\mathbb{\tilde{E}}_{t}[X_{n}]=\lim_{n\rightarrow
\infty}\mathbb{\tilde{E}}_{t}[\tilde{X}_{n}],\  \  \text{q.s..}%
\]

\end{proposition}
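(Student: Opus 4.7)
The plan is to mirror the proof of Proposition \ref{pro2}, replacing the subadditivity of $\mathbb{\hat{E}}_{t}$ used there with the domination $\mathbb{\tilde{E}}_{t}[X]-\mathbb{\tilde{E}}_{t}[Y]\leq \mathbb{\hat{E}}_{t}[X-Y]$ from property (5). A preliminary point is that property (5), together with the Lipschitz estimate in Proposition \ref{pro4.1}, extends by $L_{G}^{1}$-continuity from $L_{ip}(\Omega)$ to all of $L_{G}^{1}(\Omega)$; in particular, $\mathbb{\tilde{E}}_{t}$ is monotone on $L_{G}^{1}(\Omega)$. Consequently the sequences $\{\mathbb{\tilde{E}}_{t}[X_{n}]\}$ and $\{\mathbb{\tilde{E}}_{t}[\tilde{X}_{n}]\}$ are q.s.\ decreasing, hence converge q.s.\ to limits in $[-\infty,\infty)$, so both sides of the claimed identity exist.

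For the comparison, fix $m$ and estimate, using property (5) on $L_{G}^{1}(\Omega)$,
\[
\mathbb{\tilde{E}}_{t}[X_{n}]-\mathbb{\tilde{E}}_{t}[\tilde{X}_{m}]\leq \mathbb{\hat{E}}_{t}[X_{n}-\tilde{X}_{m}]\leq \mathbb{\hat{E}}_{t}[(X_{n}-\tilde{X}_{m})^{+}]\text{ q.s..}
\]
Since $X_{n},\tilde{X}_{m}\in L_{G}^{1}(\Omega)$ and $L_{G}^{1}(\Omega)$ is stable under lattice operations, $(X_{n}-\tilde{X}_{m})^{+}\in L_{G}^{1}(\Omega)$; because $X_{n}\downarrow X\leq \tilde{X}_{m}$ q.s.\ (the latter follows from $\tilde{X}_{k}\downarrow X$), we have $(X_{n}-\tilde{X}_{m})^{+}\downarrow 0$ q.s.\ as $n\rightarrow\infty$. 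Lemma \ref{le1} therefore gives $\mathbb{\hat{E}}_{t}[(X_{n}-\tilde{X}_{m})^{+}]\downarrow 0$ q.s., so that
\[
\lim_{n\rightarrow\infty}\mathbb{\tilde{E}}_{t}[X_{n}]\leq \mathbb{\tilde{E}}_{t}[\tilde{X}_{m}]\text{ q.s..}
\]
Letting $m\rightarrow\infty$ yields $\lim_{n}\mathbb{\tilde{E}}_{t}[X_{n}]\leq \lim_{m}\mathbb{\tilde{E}}_{t}[\tilde{X}_{m}]$ q.s..

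Swapping the roles of $\{X_{n}\}$ and $\{\tilde{X}_{n}\}$ in the same argument gives the reverse inequality, and the proof is complete. There is no real obstacle here beyond being explicit about the extension of property (5) to $L_{G}^{1}(\Omega)$ and the verification that $(X_{n}-\tilde{X}_{m})^{+}$ lies in $L_{G}^{1}(\Omega)$ and decreases q.s.\ to zero; everything else is a direct transcription of the proof of Proposition \ref{pro2}, using Lemma \ref{le1} as the key continuity tool.
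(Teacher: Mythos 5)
Your proof is correct and follows essentially the same route as the paper's: the same estimate $\mathbb{\tilde{E}}_{t}[X_{n}]-\mathbb{\tilde{E}}_{t}[\tilde{X}_{m}]\leq \mathbb{\hat{E}}_{t}[(X_{n}-\tilde{X}_{m})^{+}]$ via property (5), the same application of Lemma \ref{le1} to send $n\rightarrow\infty$ for fixed $m$, and the same symmetry argument. The extra remarks you make about extending property (5) to $L_{G}^{1}(\Omega)$ and checking $(X_{n}-\tilde{X}_{m})^{+}\in L_{G}^{1}(\Omega)$ are sound and only make explicit what the paper leaves implicit.
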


\begin{proof}
For each fixed $m$,
\[
\mathbb{\tilde{E}}_{t}[X_{n}]-\mathbb{\tilde{E}}_{t}[\tilde{X}_{m}%
]\leq \mathbb{\hat{E}}_{t}[X_{n}-\tilde{X}_{m}]\leq \mathbb{\hat{E}}_{t}%
[(X_{n}-\tilde{X}_{m})^{+}].
\]
Since $X_{n}\downarrow X$ q.s., we get $(X_{n}-\tilde{X}_{m})^{+}\downarrow0$
q.s.. By Lemma \ref{le1}, we obtain $\lim_{n\rightarrow \infty}\mathbb{\tilde
{E}}_{t}[X_{n}]$ $\leq \mathbb{\tilde{E}}_{t}[\tilde{X}_{m}]$ q.s.. Thus
$\lim_{n\rightarrow \infty}\mathbb{\tilde{E}}_{t}[X_{n}]\leq \lim_{n\rightarrow
\infty}\mathbb{\tilde{E}}_{t}[\tilde{X}_{n}]$ q.s.. Similarly, we can prove
that $\lim_{n\rightarrow \infty}\mathbb{\tilde{E}}_{t}[X_{n}]\geq
\lim_{n\rightarrow \infty}\mathbb{\tilde{E}}_{t}[\tilde{X}_{n}]$ q.s.. The
proof is complete.
\end{proof}

\begin{proposition}
\label{pro4.4} We have

\begin{description}
\item[(1)] $X,Y\in L_{G}^{1^{\ast}}(\Omega)$, $X\leq Y\ $q.s.$\Longrightarrow
\mathbb{\tilde{E}}_{t}[X]\leq \mathbb{\tilde{E}}_{t}[Y]\  \ $q$_{.}$s$_{.}$;

\item[(2)] $\{X_{n}\}_{n=1}^{\infty}\subset L_{G}^{1^{\ast}}(\Omega)$,
$X_{n}\downarrow X\in L_{G}^{1^{\ast}}(\Omega)$ q.s.$\Longrightarrow
\mathbb{\tilde{E}}_{t}[X]=\lim_{n\rightarrow \infty}\mathbb{\tilde{E}}%
_{t}[X_{n}]$ q.s.;

\item[(3)] $X\in L_{G}^{1^{\ast}}(\Omega_{t})$, $Y\in L_{G}^{1^{\ast}}%
(\Omega)\Longrightarrow \mathbb{\tilde{E}}_{t}[X+Y]=X+\mathbb{\tilde{E}}%
_{t}[Y]$;

\item[(4)] $X\in L_{G}^{1^{\ast}}(\Omega)\Longrightarrow \mathbb{\tilde{E}}%
_{t}[X]\in L_{G}^{1^{\ast}}(\Omega_{t})$ and $\mathbb{\tilde{E}}%
_{s}[\mathbb{\tilde{E}}_{t}[X]]=\mathbb{\tilde{E}}_{s\wedge t}[X]$;

\item[(5)] $X\in L_{G}^{1^{\ast}}(\Omega)$, $Y\in L_{G}^{1^{\ast}}%
(\Omega)\Longrightarrow \mathbb{\tilde{E}}_{t}[X]-\mathbb{\tilde{E}}_{t}%
[Y]\leq \mathbb{\hat{E}}_{t}[X-Y]$.
\end{description}
\end{proposition}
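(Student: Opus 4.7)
The plan is to prove each of the five properties by mirroring the argument used for Proposition \ref{pro3}, the key engine in every case being (i) Definition \ref{de4.2}, (ii) Lemma \ref{le1} together with Proposition \ref{pro4.1} at the $L_{G}^{1}$-level, and (iii) the corresponding property already available for $\mathbb{\tilde{E}}_{t}$ on $L_{G}^{1}(\Omega)$. Throughout, I pick downward approximations $X_{n}\downarrow X$, $Y_{n}\downarrow Y$ with $X_{n},Y_{n}\in L_{G}^{1}(\Omega)$ and use the just-proved Proposition \ref{pro4.3} to pass to the limit.

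For (1), when $X\leq Y$ q.s.\ I note that $X_{n}\vee Y_{n}\in L_{G}^{1}(\Omega)$ decreases q.s.\ to $X\vee Y=Y$, so the $L_{G}^{1}$-monotonicity of $\mathbb{\tilde{E}}_{t}$ gives $\mathbb{\tilde{E}}_{t}[X_{n}]\leq\mathbb{\tilde{E}}_{t}[X_{n}\vee Y_{n}]$; letting $n\to\infty$ yields the claim. For (2), I use the diagonal trick from part (7) of Proposition \ref{pro3}: given $X_{n}\downarrow X$ in $L_{G}^{1^{\ast}}(\Omega)$, choose $\xi_{m}^{n}\downarrow X_{n}$ in $L_{G}^{1}(\Omega)$ and set $\eta_{n}=\wedge_{i=1}^{n}\xi_{n}^{i}\in L_{G}^{1}(\Omega)$; then $\eta_{n}\geq X_{n}$ q.s.\ and $\eta_{n}\downarrow X$ q.s., so by Definition \ref{de4.2} and part (1), $\mathbb{\tilde{E}}_{t}[X]=\lim_{n}\mathbb{\tilde{E}}_{t}[\eta_{n}]\geq\lim_{n}\mathbb{\tilde{E}}_{t}[X_{n}]\geq\mathbb{\tilde{E}}_{t}[X]$. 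Statement (3) is immediate once we notice that $X_{n}+Y_{n}\in L_{G}^{1}(\Omega)\downarrow X+Y$ and the identity $\mathbb{\tilde{E}}_{t}[X_{n}+Y_{n}]=X_{n}+\mathbb{\tilde{E}}_{t}[Y_{n}]$ holds at the $L_{G}^{1}$-level. For (4), $\mathbb{\tilde{E}}_{t}[X_{n}]\in L_{G}^{1}(\Omega_{t})$ and is decreasing q.s.\ to $\mathbb{\tilde{E}}_{t}[X]$, so $\mathbb{\tilde{E}}_{t}[X]\in L_{G}^{1^{\ast}}(\Omega_{t})$; the tower property then follows by applying Definition \ref{de4.2} twice and the tower property at the $L_{G}^{1}$-level.

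The genuine difficulty lies in (5), because the difference $X_{n}-Y_{n}$ does not converge monotonically. My plan is to decouple the two sequences: from the $L_{G}^{1}$-domination $\mathbb{\tilde{E}}_{t}[X_{n}]-\mathbb{\tilde{E}}_{t}[Y_{m}]\leq\mathbb{\hat{E}}_{t}[X_{n}-Y_{m}]$, I first let $n\to\infty$ with $Y_{m}$ fixed. Since $X_{n}-Y_{m}\downarrow X-Y_{m}$ in $\mathcal{L}_{G}^{1^{\ast}}(\Omega)$, the extended $\mathbb{\hat{E}}_{t}$ from Section 3.1 gives $\mathbb{\hat{E}}_{t}[X_{n}-Y_{m}]\downarrow\mathbb{\hat{E}}_{t}[X-Y_{m}]$ q.s., and Definition \ref{de4.2} yields $\mathbb{\tilde{E}}_{t}[X]-\mathbb{\tilde{E}}_{t}[Y_{m}]\leq\mathbb{\hat{E}}_{t}[X-Y_{m}]$ q.s. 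Then, using the subadditivity of the extended $\mathbb{\hat{E}}_{t}$ on $\mathcal{L}_{G}^{1_{\ast}^{\ast}}(\Omega)$ from Proposition \ref{pro7}(4) together with $\mathbb{\hat{E}}_{t}[Y-Y_{m}]\leq 0$, I bound $\mathbb{\hat{E}}_{t}[X-Y_{m}]\leq\mathbb{\hat{E}}_{t}[X-Y]$ (the right-hand side being defined since $X-Y\in L_{G}^{\ast 1}(\Omega)\subset\mathcal{L}_{G}^{1_{\ast}^{\ast}}(\Omega)$). Finally, letting $m\to\infty$ so that $\mathbb{\tilde{E}}_{t}[Y_{m}]\downarrow\mathbb{\tilde{E}}_{t}[Y]$ concludes the proof.

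The main obstacle is precisely this last step: justifying the domain transition from $\mathbb{\hat{E}}_{t}[X-Y_{m}]$ (defined in $\mathcal{L}_{G}^{1^{\ast}}$) to $\mathbb{\hat{E}}_{t}[X-Y]$ (defined only in the larger $\mathcal{L}_{G}^{1_{\ast}^{\ast}}$), so that the subadditive comparison is legitimate. Modulo this careful bookkeeping between the extension levels, the entire proposition is a routine lift of the $L_{G}^{1}$-level facts, and the writing will follow exactly the same order as Proposition \ref{pro3}.
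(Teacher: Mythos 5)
Your proof is correct, and for parts (1)--(4) it follows the same path as the paper, which simply remarks that these are proved as in Proposition \ref{pro3}. The only genuine divergence is the last step of (5). Having obtained $\mathbb{\tilde{E}}_{t}[X]-\mathbb{\tilde{E}}_{t}[Y_{m}]\leq \mathbb{\hat{E}}_{t}[X-Y_{m}]$ exactly as the paper does, you bound the right-hand side uniformly in $m$ by $\mathbb{\hat{E}}_{t}[X-Y]$ using the subadditivity of the extended conditional expectation (Proposition \ref{pro7}(4)) together with $\mathbb{\hat{E}}_{t}[Y-Y_{m}]\leq 0$, and then pass to the limit on the left only. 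This is legitimate: $X-Y\in L_{G}^{\ast 1}(\Omega)$ and $Y-Y_{m}\in L_{G}^{1^{\ast}}(\Omega)$ both lie in $\mathcal{L}_{G}^{1_{\ast}^{\ast}}(\Omega)$, and Lemma \ref{lem6} guarantees that $\mathbb{\hat{E}}_{t}[X-Y_{m}]$ has the same value whether $X-Y_{m}$ is regarded as an element of $L_{G}^{1^{\ast}}(\Omega)$ or of $L_{G}^{1_{\ast}^{\ast}}(\Omega)$. However, the ``domain transition'' you flag as the main obstacle dissolves in the paper's treatment: since $X-Y_{m}\in L_{G}^{1^{\ast}}(\Omega)$ and $X-Y_{m}\uparrow X-Y\in L_{G}^{1_{\ast}^{\ast}}(\Omega)$, the convergence $\mathbb{\hat{E}}_{t}[X-Y_{m}]\uparrow \mathbb{\hat{E}}_{t}[X-Y]$ is nothing but Definition \ref{de2} (with Proposition \ref{pro6} ensuring the limit is independent of the approximating sequence), so one lets $m\rightarrow \infty$ on both sides of the inequality simultaneously and no subadditivity is needed. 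Your route costs the extra hypothesis-checking for Proposition \ref{pro7}(4); the paper's obtains the same conclusion directly from the construction of the upward extension.
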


\begin{proof}
The proof of (1)-(4) is similar to Proposition \ref{pro3}, we only prove (5).
Let $\{X_{n}\}_{n=1}^{\infty}$, $\{Y_{n}\}_{n=1}^{\infty}$ be two sequences in
$L_{G}^{1}(\Omega)$ such that $X_{n}\downarrow X$ and $Y_{n}\downarrow Y$
q.s.. For each $n$, $m\geq1$, $\mathbb{\tilde{E}}_{t}[X_{n}]-\mathbb{\tilde
{E}}_{t}[Y_{m}]\leq \mathbb{\hat{E}}_{t}[X_{n}-Y_{m}]$, then we get
$\mathbb{\tilde{E}}_{t}[X]-\mathbb{\tilde{E}}_{t}[Y_{m}]\leq \mathbb{\hat{E}%
}_{t}[X-Y_{m}]$ by taking $n\rightarrow \infty$. Noting that $X-Y_{m}\in
L_{G}^{1^{\ast}}(\Omega)$ and $X-Y_{m}\uparrow X-Y\in L_{G}^{1_{\ast}^{\ast}%
}(\Omega)$, then we obtain $\mathbb{\tilde{E}}_{t}[X]-\mathbb{\tilde{E}}%
_{t}[Y]\leq \mathbb{\hat{E}}_{t}[X-Y]$ by taking $m\rightarrow \infty$.
\end{proof}

\begin{definition}
\label{de4.5} For each $X\in-L_{G}^{1^{\ast}}(\Omega)$, there exists a
sequence $\{X_{n}\}_{n=1}^{\infty}\subset L_{G}^{1^{\ast}}(\Omega)$ such that
$X_{n}\uparrow X$ q.s., we define%
\[
\mathbb{\tilde{E}}_{t}[X]=\lim_{n\rightarrow \infty}\mathbb{\tilde{E}}%
_{t}[X_{n}]\text{ q.s..}%
\]

\end{definition}

The following proposition prove that Definition \ref{de4.5} is meaningful.

\begin{proposition}
\label{pro4.6} Let $\{X_{n}\}_{n=1}^{\infty}$, $\{ \tilde{X}_{n}%
\}_{n=1}^{\infty}$ be two sequences in $L_{G}^{1}(\Omega)$ such that
$X_{n}\uparrow X$ and $\tilde{X}_{n}\uparrow X$ q.s.. Then
\[
\lim_{n\rightarrow \infty}\mathbb{\tilde{E}}_{t}[X_{n}]=\lim_{n\rightarrow
\infty}\mathbb{\tilde{E}}_{t}[\tilde{X}_{n}],\  \  \text{q.s..}%
\]

\end{proposition}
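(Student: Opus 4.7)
The plan is to mirror the proof of Proposition \ref{pro4.3} with the roles of the two sequences and the direction of the inequality reversed, exploiting the domination property (Proposition \ref{pro4.1}) that $\mathbb{\tilde{E}}_t[\xi]-\mathbb{\tilde{E}}_t[\eta]\leq \mathbb{\hat{E}}_t[\xi-\eta]$ on $L_G^1(\Omega)$, together with Lemma \ref{le1} applied to the sublinear expectation $\mathbb{\hat{E}}_t$.

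First, for each fixed $m$ and each $n$, domination and the elementary bound $\xi\le \xi^+$ give
\[
\mathbb{\tilde{E}}_t[\tilde{X}_m]-\mathbb{\tilde{E}}_t[X_n]\leq \mathbb{\hat{E}}_t[\tilde{X}_m-X_n]\leq \mathbb{\hat{E}}_t[(\tilde{X}_m-X_n)^+].
\]
Since $L_G^1(\Omega)$ is a vector lattice, $(\tilde{X}_m-X_n)^+\in L_G^1(\Omega)$. Because $X_n\uparrow X$ q.s.\ and $\tilde{X}_m\le X$ q.s., the sequence $(\tilde{X}_m-X_n)^+$ is monotone decreasing to $0$ q.s.\ as $n\to\infty$. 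Lemma \ref{le1} then yields $\mathbb{\hat{E}}_t[(\tilde{X}_m-X_n)^+]\downarrow 0$ q.s., so passing to the limit in $n$ gives
\[
\mathbb{\tilde{E}}_t[\tilde{X}_m]\le \lim_{n\to\infty}\mathbb{\tilde{E}}_t[X_n]\quad\text{q.s.},
\]
where the right-hand limit exists because monotonicity of $\mathbb{\tilde{E}}_t$ on $L_G^1(\Omega)$ ensures $\mathbb{\tilde{E}}_t[X_n]$ is nondecreasing in $n$.

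Now let $m\to\infty$. We obtain $\lim_m\mathbb{\tilde{E}}_t[\tilde{X}_m]\le \lim_n\mathbb{\tilde{E}}_t[X_n]$ q.s. Interchanging the roles of $\{X_n\}$ and $\{\tilde{X}_n\}$ in the same argument provides the reverse inequality, and the two limits coincide q.s.

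There is essentially no serious obstacle here; the only point deserving attention is to confirm that $(\tilde{X}_m-X_n)^+$ genuinely sits in $L_G^1(\Omega)$ so that Lemma \ref{le1} applies verbatim. This holds because $L_G^1(\Omega)$ is stable under the lattice operations $\vee,\wedge,(\cdot)^+$ and finite linear combinations. Had the sequences been taken only in $L_G^{1^{\ast}}(\Omega)$ (as one might anticipate from Definition \ref{de4.5}), the corresponding difference would not automatically lie in $L_G^1$, and one would need to insert an extra approximation from above by $L_G^1$ elements and invoke Proposition \ref{pro3}(7) in place of Lemma \ref{le1}; but as stated, the direct argument above suffices.
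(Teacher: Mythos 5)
Your proof is correct and is essentially identical to the paper's own argument: the same domination step $\mathbb{\tilde{E}}_{t}[\tilde{X}_{m}]-\mathbb{\tilde{E}}_{t}[X_{n}]\leq\mathbb{\hat{E}}_{t}[(\tilde{X}_{m}-X_{n})^{+}]$, the same use of Lemma \ref{le1} on the decreasing sequence $(\tilde{X}_{m}-X_{n})^{+}\downarrow 0$, and the same symmetrization. Your closing remark correctly identifies the real subtlety, namely that Definition \ref{de4.5} speaks of sequences in $L_{G}^{1^{\ast}}(\Omega)$ while the proposition as stated only treats sequences in $L_{G}^{1}(\Omega)$, but for the statement as written your direct argument suffices.
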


\begin{proof}
For each fixed $m$,
\[
\mathbb{\tilde{E}}_{t}[\tilde{X}_{m}]-\mathbb{\tilde{E}}_{t}[X_{n}%
]\leq \mathbb{\hat{E}}_{t}[\tilde{X}_{m}-X_{n}]\leq \mathbb{\hat{E}}_{t}%
[(\tilde{X}_{m}-X_{n})^{+}].
\]
Since $X_{n}\uparrow X$ q.s., we get $(\tilde{X}_{m}-X_{n})^{+}\downarrow0$
q.s.. By Lemma \ref{le1}, we obtain $\lim_{n\rightarrow \infty}\mathbb{\tilde
{E}}_{t}[X_{n}]$ $\geq \mathbb{\tilde{E}}_{t}[\tilde{X}_{m}]$ q.s.. Thus
$\lim_{n\rightarrow \infty}\mathbb{\tilde{E}}_{t}[X_{n}]\geq \lim_{n\rightarrow
\infty}\mathbb{\tilde{E}}_{t}[\tilde{X}_{n}]$ q.s.. Similarly, we can prove
that $\lim_{n\rightarrow \infty}\mathbb{\tilde{E}}_{t}[X_{n}]\leq
\lim_{n\rightarrow \infty}\mathbb{\tilde{E}}_{t}[\tilde{X}_{n}]$ q.s.. The
proof is complete.
\end{proof}

\begin{proposition}
\label{pro4.7}We have

\begin{description}
\item[(1)] $X,Y\in-L_{G}^{1^{\ast}}(\Omega)$, $X\leq Y\ $q.s.$\Longrightarrow
\mathbb{\tilde{E}}_{t}[X]\leq \mathbb{\tilde{E}}_{t}[Y]\  \ $q$_{.}$s$_{.}$;

\item[(2)] $\{X_{n}\}_{n=1}^{\infty}\subset-L_{G}^{1^{\ast}}(\Omega)$,
$X_{n}\uparrow X\in-L_{G}^{1^{\ast}}(\Omega)$ q.s.$\Longrightarrow
\mathbb{\tilde{E}}_{t}[X]=\lim_{n\rightarrow \infty}\mathbb{\tilde{E}}%
_{t}[X_{n}]$ q.s.;

\item[(3)] $X\in-L_{G}^{1^{\ast}}(\Omega_{t})$, $Y\in-L_{G}^{1^{\ast}}%
(\Omega)\Longrightarrow \mathbb{\tilde{E}}_{t}[X+Y]=X+\mathbb{\tilde{E}}%
_{t}[Y]$;

\item[(4)] $X\in-L_{G}^{1^{\ast}}(\Omega)\Longrightarrow \mathbb{\tilde{E}}%
_{t}[X]\in-L_{G}^{1^{\ast}}(\Omega_{t})$ and $\mathbb{\tilde{E}}%
_{s}[\mathbb{\tilde{E}}_{t}[X]]=\mathbb{\tilde{E}}_{s\wedge t}[X]$;

\item[(5)] $X\in-L_{G}^{1^{\ast}}(\Omega)$, $Y\in-L_{G}^{1^{\ast}}%
(\Omega)\Longrightarrow \mathbb{\tilde{E}}_{t}[X]-\mathbb{\tilde{E}}_{t}%
[Y]\leq \mathbb{\hat{E}}_{t}[X-Y]$.
\end{description}
\end{proposition}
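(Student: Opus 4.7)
The plan is to follow the template of Proposition \ref{pro4.4} verbatim, reversing all monotonicity directions. For each $X\in-L_{G}^{1^{\ast}}(\Omega)$ I will fix an approximating sequence $X_{n}\uparrow X$ with $X_{n}\in L_{G}^{1}(\Omega)$ (such a sequence exists, since $-X\in L_{G}^{1^{\ast}}$ admits $-X_{n}\downarrow -X$ with $-X_{n}\in L_{G}^{1}$); well-definedness on $L_{G}^{1}$-sequences is exactly Proposition \ref{pro4.6}. The $L_{G}^{1}$-level nonlinear conditional expectation is already additive over $L_{G}^{1}(\Omega_{t})$-summands and dominated by $\mathbb{\hat{E}}_{t}$ through Proposition \ref{pro4.1}, so each conclusion (1)--(5) will follow by pushing these $L_{G}^{1}$ identities through a monotone passage to the limit.

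For (1), given $X\le Y$ with $X_{n}\uparrow X$, $Y_{n}\uparrow Y$ in $L_{G}^{1}$, I will replace $Y_{n}$ by $X_{n}\vee Y_{n}\in L_{G}^{1}$: it still increases to $X\vee Y=Y$ and dominates $X_{n}$, so $L_{G}^{1}$-monotonicity yields the inequality in the limit. For (2), I will use the standard diagonal construction $Z_{n}=\max_{k\le n}X_{k}^{n}\in L_{G}^{1}$ out of approximants $X_{k}^{n}\uparrow X_{k}$, show $Z_{n}\uparrow X$, and sandwich $\lim\mathbb{\tilde{E}}_{t}[X_{n}]$ between $\lim\mathbb{\tilde{E}}_{t}[Z_{n}]=\mathbb{\tilde{E}}_{t}[X]$ and itself via (1). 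Items (3) and (4) are formal: add the two $L_{G}^{1}$ approximating sequences termwise and invoke the $L_{G}^{1}$ identities before passing to the limit; for (4), note $\mathbb{\tilde{E}}_{t}[X_{n}]\in L_{G}^{1}(\Omega_{t})$ increases q.s., so the limit lies in $-L_{G}^{1^{\ast}}(\Omega_{t})$, and the tower property transfers to the limit directly from Definition \ref{de4.5}.

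The main work is (5), where the key idea is to take iterated limits in the \emph{correct} order. Starting from the $L_{G}^{1}$ inequality $\mathbb{\tilde{E}}_{t}[X_{n}]-\mathbb{\tilde{E}}_{t}[Y_{m}]\le\mathbb{\hat{E}}_{t}[X_{n}-Y_{m}]$, I will first send $m\to\infty$ at fixed $n$. Since $Y_{m}\uparrow Y$, one has $X_{n}-Y_{m}\downarrow X_{n}-Y$; because $-Y\in L_{G}^{1^{\ast}}(\Omega)$ (downward limit of $-Y_{m}\in L_{G}^{1}$), also $X_{n}-Y\in L_{G}^{1^{\ast}}(\Omega)$, so Definition \ref{de1} gives $\mathbb{\hat{E}}_{t}[X_{n}-Y_{m}]\downarrow\mathbb{\hat{E}}_{t}[X_{n}-Y]$ q.s., while $\mathbb{\tilde{E}}_{t}[Y_{m}]\uparrow\mathbb{\tilde{E}}_{t}[Y]$ by Definition \ref{de4.5}. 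This yields $\mathbb{\tilde{E}}_{t}[X_{n}]-\mathbb{\tilde{E}}_{t}[Y]\le\mathbb{\hat{E}}_{t}[X_{n}-Y]$. Then I send $n\to\infty$: $X_{n}-Y\uparrow X-Y\in L_{G}^{1_{\ast}^{\ast}}(\Omega)$, so Proposition \ref{pro7}(7) provides $\mathbb{\hat{E}}_{t}[X_{n}-Y]\uparrow\mathbb{\hat{E}}_{t}[X-Y]$ q.s., finishing the estimate.

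The principal obstacle I anticipate is ensuring each intermediate random variable sits in the precise extension space for which the relevant monotone convergence theorem (Proposition \ref{pro3}(7) for downward limits inside $\mathcal{L}_{G}^{1^{\ast}}$, Proposition \ref{pro7}(7) for upward limits inside $\mathcal{L}_{G}^{1_{\ast}^{\ast}}$) applies. The order of limits in (5) is chosen precisely so that the inner pass (fixed $n$, $m\to\infty$) happens within $L_{G}^{1^{\ast}}$ and the outer pass (the upward one) happens within $L_{G}^{1_{\ast}^{\ast}}$; reversing the order would demand downward continuity of $\mathbb{\hat{E}}_{t}$ on a space larger than $\mathcal{L}_{G}^{1^{\ast}}$, which is not available.
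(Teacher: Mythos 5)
Your proposal is correct and matches the paper's intended argument: the paper omits the proof of Proposition \ref{pro4.7} with the remark that it is similar to Proposition \ref{pro4.4}, and your plan is precisely that mirrored argument, including the one genuinely delicate point in (5) — taking the inner limit in the index whose passage keeps the argument of $\mathbb{\hat{E}}_{t}$ decreasing (so Definition \ref{de1} applies) and the outer limit in the increasing index (so Definition \ref{de2} applies), which is exactly the order used in the paper's proof of Proposition \ref{pro4.4}(5) with the roles of the two indices exchanged.
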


\begin{proof}
The proof is similar to Proposition \ref{pro4.4}, we omit it.
\end{proof}

The following example shows that the extension to $L_{G}^{1_{\ast}^{\ast}%
}(\Omega)$ is impossible for some nonlinear expectations.

\begin{example}
Define $\mathbb{\tilde{E}}_{t}[X]=-\mathbb{\hat{E}}_{t}[-X]$ for each $X\in
L_{ip}(\Omega)$. It is easy to check that $\mathbb{\tilde{E}}_{t}[\cdot]$
satisfies the above properties. We consider%
\[
X_{n}=-I_{\{ \bar{\sigma}^{2}-\frac{1}{n}<\langle B\rangle_{1}<\bar{\sigma
}^{2}\}}\text{, }\tilde{X}_{n}=0.
\]
It is easy to verify that $X_{n}$, $\tilde{X}_{n}\in L_{G}^{1^{\ast}}(\Omega)$
and $X_{n}\uparrow0$, $\tilde{X}_{n}\uparrow0$. But $\mathbb{\tilde{E}}%
[X_{n}]=-1$ and $\mathbb{\tilde{E}}[\tilde{X}_{n}]=0$ for $n\geq1$.
\end{example}

%%%%%%%%%%%%%%%%%%%%%%%²Î¿¼ÎÄÏ×
\renewcommand{\refname}{\large References}{\normalsize \ }

\end{document}